\documentclass[hidelinks,onefignum,onetabnum]{siamart251216}

\usepackage{amssymb}
\usepackage{bm}
\usepackage{graphicx}
\usepackage{algorithmic}
\usepackage{booktabs}
\usepackage{placeins}
\newsiamremark{assumption}{Assumption}
\newsiamremark{remark}{Remark}

\providecommand{\Curl}{\operatorname{Curl}}
\providecommand{\sCurl}{\operatorname{sCurl}}
\providecommand{\Div}{\operatorname{div}}
\providecommand{\DivDiv}{\operatorname{divdiv}}

\headers{Boundary-Corrected Mixed FEM for the Babu\v{s}ka Paradox}
{P. Tian, S. Wu, and H. Zhou}
\title{A Mixed Finite Element Method for the Babu\v{s}ka Paradox Using Only Discrete Geometry\thanks{{\bfseries Funding:} National Natural
Science Foundation of China under Grant Nos.~12571383 and 12288101.}}
\author{Pengjie Tian\thanks{School of Mathematical Sciences, Peking University,
Beijing 100871, China (\email{pengjie.tian@math.pku.edu.cn}, \email{snwu@math.pku.edu.cn}, and
\email{zhouhao23@pku.edu.cn}).}
\and Shuonan Wu\footnotemark[2]
\and Hao Zhou\footnotemark[2]}

\ifpdf
\hypersetup{
  pdftitle={A Mixed Finite Element Method for the Babu\v{s}ka Paradox Using Only Discrete Geometry},
  pdfauthor={Pengjie Tian, Shuonan Wu, and Hao Zhou}
}
\fi

\begin{document}

\maketitle

\begin{abstract}
The classical Babu\v{s}ka paradox shows that solutions of simply supported
plate problems on polygonal approximations of a curved domain may converge
to an unintended limit. We develop a boundary-corrected
\(H(\DivDiv;\mathbb S)\)--\(L^2\) mixed finite
element method for the simply supported Kirchhoff--Love plate problem. 
The correction uses only the discrete boundary geometry.
Introducing the bending moment as an independent unknown allows the
condition \(M_{nn}=0\) to be imposed directly, while an
edgewise mean constraint on the effective shear suppresses the leading
geometric inconsistency. This constraint improves the boundary consistency
error from \(\mathcal O(h^{1/2})\) to \(\mathcal O(h^{3/2})\). The associated
boundary corrections act in the kernel of \(\DivDiv\), leaving the discrete
equilibrium equation unchanged and yielding a uniformly stable scheme.
Under suitable regularity assumptions, we prove \(L^2\)-error estimates
of order \(h^{3/2}\)
for the bending moment and the broken Hessian of the postprocessed
displacement, and of order \(h^2\) for the displacement. The analysis covers
multiply connected domains and polygonal approximations whose boundaries may
cross the physical boundary. Numerical experiments confirm these rates and
the improvement over the uncorrected method.
\end{abstract}

\begin{keywords}
Babu\v{s}ka paradox, Kirchhoff--Love plate,
\(H(\DivDiv;\mathbb S)\)-conforming mixed finite element,
polygonal domain approximation,
boundary correction
\end{keywords}

\begin{MSCcodes}
65N30, 65N12, 65N15, 74K20
\end{MSCcodes}

\section{Introduction}
\label{sec:introduction}

The Babu\v{s}ka paradox is a classical failure of geometric approximation
for the simply supported Kirchhoff--Love plate. The issue goes back to
Babu\v{s}ka's study of domain perturbations \cite{Babuska1963} and was
systematically analyzed by Babu\v{s}ka and Pitk\"aranta
\cite{BabuskaPitkaranta1990}. Even when a sequence of polygonal domains
converges to a smooth domain and each polygonal problem is solved exactly,
the corresponding solutions may converge to a different limit \cite{MazyaNazarov1986,NazarovSweersStilyanou2011,
ChechkinLukkassenMeidell2008}.

The paradox stems from the dependence of the natural plate boundary
condition on boundary curvature. Let \(\Omega\subset\mathbb R^2\) be the
plate midsurface, \(\Gamma:=\partial\Omega\), and \(f \in L^2(\Omega)\) the transverse
load. After scaling the flexural rigidity, the energy of an isotropic
plate with Poisson ratio \(0\leq\sigma<1\) is
\begin{equation*}
  E_\Omega(v):=\frac12\int_\Omega
  \bigl((1-\sigma)|D^2v|^2+\sigma(\Delta v)^2\bigr)\,\mathrm dx
  -\int_\Omega fv\,\mathrm dx,
  \quad v\in H^2(\Omega)\cap H_0^1(\Omega).
\end{equation*}
Let \(u\) be its minimizer and set
\(\mathbb C_\sigma\bm N:=(1-\sigma)\bm N
+\sigma\operatorname{tr}(\bm N)\bm I\). In terms of the bending moment,
the Euler--Lagrange problem reads
\begin{equation}
\label{eq:intro-strong-problem}
  \bm M=\mathbb C_\sigma D^2u,\quad
  \DivDiv\bm M=f\quad\text{in }\Omega,\qquad
  u=0,\quad M_{nn}=0\quad\text{on }\Gamma.
\end{equation}
Here, \(\bm I\) is the identity matrix and
\(M_{nn}:=\bm n^\top\bm M\bm n\), with \(\bm n\) the outward unit normal.
The curvature dependence becomes explicit when \(M_{nn}\) is expressed in
terms of \(u\). On a smooth boundary piece, differentiating the boundary
identity \(u|_\Gamma=0\) twice along the boundary gives
\(M_{nn}=\Delta u-(1-\sigma)\kappa\partial_n u\), where \(\kappa\) is the
signed curvature. On an open edge of a polygon, \(\kappa=0\) and the
condition reduces to \(\Delta u=0\). Thus, polygonal approximation changes
the natural boundary condition and may lead to a different limiting problem
\cite{Davini2002,Davini2003,DeCosterNicaiseSweers2019}.

Two distinct strategies have been used to recover the correct limit. One
approximates the physical boundary to higher order using curved or
isoparametric elements \cite{BrennerNeilanSung2013,ArnoldWalker2020}.
This replaces the classical polygonal setting by a higher-order geometric
approximation. The other retains polygonal domains and modifies the
discrete boundary conditions. In primal methods, this is achieved by
relaxing the edgewise displacement constraint, for example through vertex
conditions or modified and penalized boundary conditions
\cite{Scott1977,RannacherParadox1979,UtkuCarey1983}. Related nonconforming
and mixed approaches were studied in
\cite{RannacherMixed1979,DaviniPitacco2000}. More recently, Bartels and
Tscherner \cite{BartelsTscherner2025} characterized the underlying
principle through compatibility between the discrete boundary constraints
and the approximation of admissible functions. 

We retain the same polygonal setting and develop a {\it boundary-corrected
mixed method} using only the discrete boundary geometry. The Green identity
identifies this correction as a natural mixed counterpart of the primal
relaxation. The displacement trace is paired with the effective shear
\(q_n(\bm M)\). Enlarging the admissible primal trace space therefore
corresponds to restricting its conjugate shear modes. This viewpoint is
particularly natural in \(H(\DivDiv;\mathbb S)\), where the bending moment
is an independent unknown and its generalized boundary trace comprises the
normal--normal moment, effective shear, and corner forces. F\"uhrer and
Heuer \cite{FuehrerHeuer2025} constructed a two-dimensional conforming
element with these degrees of freedom and established its second-order
approximation properties on polygonal meshes. For other conforming
\(H(\DivDiv;\mathbb S)\) elements, see
\cite{ChenHuangDivDiv2020,HuMaZhang2021,ChenHuang3D2022,
ChenHuangArbitrary2022,ChenHuangHybrid2025,HuLiangMaZhang2024}.
For a simply supported plate, \(M_{nn}=0\) is imposed directly on the
moment, whereas the displacement condition is encoded variationally.
The standard mixed discretization therefore avoids the wrong-limit
behavior of the classical paradox.

Avoiding the wrong limit does not, however, prevent geometry-induced
order reduction. On a polygonal approximation, an extension of the exact
displacement has an \(\mathcal O(h^2)\) trace on the straight boundary
edges, while the inverse trace estimate for the discrete effective shear
carries a factor \(\mathcal O(h^{-3/2})\). Their pairing therefore yields
only an \(\mathcal O(h^{1/2})\) consistency estimate. Although the
F\"uhrer--Heuer discretization provides \(\mathcal O(h^2)\) best
approximations for both variables, its direct application to polygonal
approximations gives only an \(\mathcal O(h^{1/2})\) \(L^2\)-error bound
for the bending moment. Arnold and Walker observed a half-order
moment rate for straight-sided HHJ discretizations
\cite[Section~6.2.2]{ArnoldWalker2020}. Their boundary degrees of freedom
include normal--normal moments.

On every discrete boundary edge \(e\), we eliminate the constant
effective-shear mode by imposing
\[
  \int_e q_n(\bm M_h)\,\mathrm ds=0.
\]
For the F\"uhrer--Heuer element, \(q_n(\bm M_h)|_e\) is affine. The
constraint eliminates its constant part and leaves a zero-mean linear
mode. This mode is odd about the midpoint of \(e\) and therefore pairs
only with the odd part of the displacement trace. This odd trace is of
order \(\mathcal O(h^3)\), one order smaller than the full trace. The
resulting boundary consistency error is therefore bounded by
\(\mathcal O(h^{3/2})\), both on exact straight pieces and on polygonal
approximations of curved pieces. This estimate is relevant for the class
considered here, where \(\Gamma\) contains a genuinely curved portion of
\(\mathcal O(1)\) length. The correction fixes one scalar degree of freedom
per boundary edge and, once the polygonal mesh is given, requires no further
information about \(\Gamma\).

The effective-shear constraint can be enforced by local edge corrections.
The nontrivial part of the stability analysis, however, is correcting the
normal--normal trace after canonical interpolation. A moment field
satisfying \(M_{nn}=0\) on \(\Gamma\) generally has an interpolant whose
normal--normal trace does not vanish on \(\Gamma_h\). Removing this defect
within \(\ker(\DivDiv)\) requires a global treatment of the constant trace
mode on each boundary component. Using a characterization of the discrete
\(\DivDiv\) kernel and a uniform discrete boundary lifting, we construct
\(h\)-uniformly stable corrections for both boundary traces. Since these
corrections lie in \(\ker(\DivDiv)\), they leave the discrete equilibrium
equation unchanged and preserve the commuting property of the canonical
interpolant. The construction yields a uniform inf--sup condition for the
constrained mixed method and an admissible interpolant for the error
analysis.

Building on this uniform stability, we prove an \(L^2\)-error estimate of
order \(h^{3/2}\) for the bending moment. This rate holds for inner
approximations and, under a mild assumption on the extended load, for
polygonal boundaries that cross the physical boundary. Under an
\(H^4\)-regularity assumption for the auxiliary problem, we obtain an
\(L^2\)-displacement error of order \(h^2\) without any inclusion relation
between \(\Omega_h\) and \(\Omega\). Standard local cubic postprocessing yields an \(L^2\)-error estimate of order \(h^{3/2}\) for the broken Hessian.
 The method applies directly to
nonconvex and multiply connected domains whose boundaries contain both
exact straight segments and polygonal approximations of curved pieces,
without distinguishing between them in the algorithm.
The final numerical example combines all these geometric features.

The remainder is organized as follows. Section~\ref{sec:preliminaries}
presents the mixed formulation and the continuous and discrete geometric setting.
Section~\ref{sec:discrete-method} introduces the F\"uhrer--Heuer element
and the corrected mixed method. Section~\ref{sec:boundary-correction}
constructs the \(\DivDiv\)-preserving boundary corrections using the
uniform discrete boundary lifting established in
Appendix~\ref{app:boundary-lifting}. Section~\ref{sec:stability-error}
establishes discrete well-posedness and derives the error estimates.
Section~\ref{sec:supercloseness-postprocessing} analyzes the local cubic
postprocessing, and Section~\ref{sec:numerical-experiments} presents the
numerical experiments.
 \section{Mixed formulation and geometric setting}
\label{sec:preliminaries}

This section presents the mixed formulation and the geometric
setting for its polygonal discretization.

\subsection{Mixed formulation}
\label{subsec:mixed-formulation}

Let \(\omega\subset\mathbb R^2\) be a bounded Lipschitz domain.  For
\(s\geq0\), we denote by \(H^s(\omega)\) the usual Sobolev space and
use \((\cdot,\cdot)_\omega\) and \(\|\cdot\|_{s,\omega}\) for the
\(L^2(\omega)\) inner product and the \(H^s(\omega)\) norm,
respectively.  We abbreviate \(\|\cdot\|_{0,\omega}\) by
\(\|\cdot\|_\omega\), with the same notation used componentwise for
vector- and tensor-valued functions.  We write
\(\mathbb S:=\{\bm N\in\mathbb R^{2\times2}:\bm N^\top=\bm N\}\) and set
\(\bm N:\bm Q:=\operatorname{tr}(\bm N^\top\bm Q)\) and
\(|\bm N|:=(\bm N:\bm N)^{1/2}\).  For a
scalar function \(v\), the symbols \(\nabla v\) and \(D^2v\) denote its
gradient and Hessian.  Whenever unit tangent and normal fields
\(\bm t\) and \(\bm n\) are specified, we write
\(\partial_t:=\bm t\cdot\nabla\) and
\(\partial_n:=\bm n\cdot\nabla\); the notation
\(\partial_{t_e}\) is used analogously for \(\bm t_e\).  On a straight
edge, repeated subscripts denote repeated directional differentiation,
so that \(\partial_{tt} v=\bm t^\top D^2v\,\bm t\).  Finally,
\(\mathbb{P}_k(\omega;X)\) is the space of
\(X\)-valued polynomials of total degree at most \(k\); the range is
omitted when \(X=\mathbb R\).  We write \(a\lesssim b\) if
\(a\leq Cb\), where the constant \(C\) is independent of the mesh size.

For a tensor field \(\bm N\), the divergence is understood row-wise, and
all derivatives below are distributional.  We set
\(
\DivDiv\bm N:=\Div(\Div\bm N)
\)
and define
\[
H(\DivDiv,\omega;\mathbb S) :=
\{
\bm N\in L^2(\omega;\mathbb S):
\DivDiv\bm N\in L^2(\omega)
\},
\]
with graph norm $\|\bm N\|_{H(\DivDiv,\omega)}^2
:=
\|\bm N\|_\omega^2+
\|\DivDiv\bm N\|_\omega^2$.
For \(\bm p=(p_1,p_2)^\top\in H^1(\omega;\mathbb R^2)\), we denote
\[
\Curl\bm p
:=
\begin{pmatrix}
\partial_2 p_1 & -\partial_1 p_1\\
\partial_2 p_2 & -\partial_1 p_2
\end{pmatrix},
\qquad
\sCurl\bm p
:=
\frac12\bigl(\Curl\bm p+(\Curl\bm p)^\top\bigr).
\]
In particular,
\(
\DivDiv(\sCurl\bm p)=0
\)
in the sense of distributions.

The precise assumptions on the plate domain and its polygonal
approximations are specified in
Section~\ref{subsec:continuous-discrete-geometry}. For the bending
operator in \eqref{eq:intro-strong-problem}, define the compliance operator by
\begin{equation}
\label{eq:compliance-operator}
\mathbb A_\sigma\bm Q
:=\mathbb C_\sigma^{-1}\bm Q
=
\frac{1}{1-\sigma}
\Big(
\bm Q-\frac{\sigma}{1+\sigma}\operatorname{tr}(\bm Q)\bm I
\Big).
\end{equation}
It satisfies
\((1+\sigma)^{-1}|\bm Q|^2
\leq(\mathbb A_\sigma\bm Q):\bm Q
\leq(1-\sigma)^{-1}|\bm Q|^2\)
and is therefore uniformly bounded and coercive for
\(0\leq\sigma\leq\sigma_\ast<1\).

Set
\(V(\Omega):=H^2(\Omega)\cap H_0^1(\Omega)\).
Define the continuous moment space by
\begin{equation}
\label{eq:continuous-moment-space}
\Sigma(\Omega)
:=
\{
\bm N\in H(\DivDiv,\Omega;\mathbb S):
(\DivDiv\bm N,v)_\Omega-(\bm N,D^2v)_\Omega=0 ~\forall v\in V(\Omega)
\}.
\end{equation}
The constraint in \eqref{eq:continuous-moment-space} imposes
\(N_{nn}=0\) on \(\Gamma\) in the generalized trace sense and coincides
with the classical pointwise condition when \(\Gamma\) and \(\bm N\) are smooth.
Using this moment space, the mixed formulation of
\eqref{eq:intro-strong-problem} is to find
\((\bm M,u)\in\Sigma(\Omega)\times L^2(\Omega)\) such that
\begin{subequations}
\label{eq:continuous-mixed-problem}
\begin{alignat}{2}
(\mathbb A_\sigma\bm M,\bm N)_\Omega
-(u,\DivDiv\bm N)_\Omega
&=0
&\quad&\forall \bm N\in\Sigma(\Omega),\\
(\DivDiv\bm M,v)_\Omega
&=(f,v)_\Omega
&\quad&\forall v\in L^2(\Omega).
\end{alignat}
\end{subequations}

\subsection{Continuous and discrete geometry}
\label{subsec:continuous-discrete-geometry}
We assume that \(\Omega\subset\mathbb R^2\) is a bounded, connected
Lipschitz domain whose boundary has finitely many connected components,
\(\Gamma:=\partial\Omega=\bigcup_{i=0}^{H}\Gamma^{(i)}\). Here
\(\Gamma^{(0)}\) is the exterior boundary and the remaining components
bound the holes. 

For the geometric consistency estimates, each
\(\Gamma^{(i)}\) is further assumed to be a {\it simple closed piecewise
\(C^{1,1}\) curve}. Let \(\mathcal V_\Gamma\) be the finite set of
geometric corners and \(\mathcal C_\Gamma\) the collection of relatively
open \(C^{1,1}\) pieces of \(\Gamma\setminus\mathcal V_\Gamma\). A boundary
component without corners is included in \(\mathcal C_\Gamma\) as a whole.
We orient each \(\Gamma^{(i)}\) so that \(\Omega\) lies to its right and
denote the corresponding tangent and outward normal on each
\(\zeta\in\mathcal C_\Gamma\) by \(\bm t\) and \(\bm n\), with one-sided
values at geometric corners.

\begin{figure}[!htbp]
  \centering
  \includegraphics[width=0.64\textwidth]{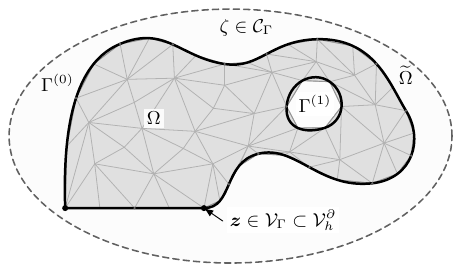}
  \caption{Continuous and discrete geometry. The gray triangulation represents
  \(\Omega_h\). Its boundary vertices lie on \(\Gamma\), geometric corners
  belong to \(\mathcal V_h^\partial\), and \(\widetilde\Omega\) is fixed
  independently of \(h\).}
  \label{fig:domain-geometry}
\end{figure}

\paragraph{Discrete geometry}
For every sufficiently small \(h\), let \(\Omega_h\) be a bounded
polygonal Lipschitz domain with boundary
\(\Gamma_h=\bigcup_{i=0}^{H}\Gamma_h^{(i)}\), and let \(\mathcal T_h\)
be a conforming triangulation of \(\Omega_h\). We
denote by \(\mathcal E_h\), \(\mathcal E_h^\partial\), \(\mathcal V_h\),
and \(\mathcal V_h^\partial\) the sets of edges, boundary edges, vertices,
and boundary vertices. Set \(h_K:=\operatorname{diam}K\), \(h_e:=|e|\),
and \(h:=\max_{K\in\mathcal T_h}h_K\). We orient
\(\Gamma_h^{(i)}\) as \(\Gamma^{(i)}\) and denote the constant unit
tangent and outward normal on \(e\in\mathcal E_h^\partial\) by
\(\bm t_e\) and \(\bm n_e\).

\begin{assumption}[polygonal boundary-interpolating meshes]
\label{ass:polygonal-boundary}
The family \(\{\mathcal T_h\}_h\) is uniformly shape regular and
quasi-uniform, with \(h\to0\) and \(\Gamma_h\to\Gamma\) in Hausdorff
distance. On each component, the vertices of \(\Gamma_h^{(i)}\) lie on
\(\Gamma^{(i)}\) in the same cyclic order, with consecutive vertices
joined by boundary edges. Thus, \(\mathcal V_h^\partial\subset\Gamma\).
No inclusion relation between \(\Omega_h\) and \(\Omega\) is required.
\end{assumption}

The Lipschitz regularity of \(\Omega\), together with
Assumption~\ref{ass:polygonal-boundary}, implies that
the domains \(\Omega_h\), \(0<h\le h_0\), form a uniformly Lipschitz
family. 
For the geometric consistency estimates, we further assume that the mesh resolves all corners, i.e., 
\begin{equation}\label{eq:corner-resolution}
\mathcal V_\Gamma\subset\mathcal V_h^\partial.
\end{equation}

\paragraph{Fixed background domain}
Since \(\Omega_h\) and \(\Omega\) need not be nested, choose a fixed bounded
smooth domain \(\widetilde\Omega\) such that
\(\overline\Omega\cup\overline{\Omega_h}\Subset\widetilde\Omega\)
for \(0<h\le h_0\).
For a fixed sufficiently small \(\delta_\Gamma>0\), set
\(\mathcal U:=\{\bm x:\operatorname{dist}(\bm x,\Gamma)<\delta_\Gamma\}
\Subset\widetilde\Omega\).
For \(h\) small enough, the region between the boundaries lies in \(\mathcal U\).
The fixed Lipschitz charts give the standard trace bound
(cf. \cite{Grisvard1985})
\begin{equation}
 \|v\|_{0,\Gamma}+\|v\|_{0,\Gamma_h}
 \lesssim\|v\|_{H^1(\widetilde\Omega)} \quad 
\forall v\in H^1(\widetilde\Omega),
 \label{eq:uniform-trace}
\end{equation}
with an \(h\)-independent constant.

For \(e=[\bm{z}_0,\bm{z}_1]\subset\Gamma_h^{(i)}\), let
\(\Gamma_e\subset\Gamma^{(i)}\) denote the physical boundary segment
from \(\bm{z}_0\) to \(\bm{z}_1\) in the common orientation. By
\eqref{eq:corner-resolution}, it lies in the closure of one \(C^{1,1}\)
boundary piece.
For \(h\) small enough, each pair \((e,\Gamma_e)\) admits the normal-graph
representation
\[
\Gamma_e=\Phi_e(e),
\quad
\Phi_e(\bm x):=\bm x+\rho_e(\bm x)\bm n_e,
\quad
\rho_e\in W^{1,\infty}(e),
\quad
\rho_e|_{\partial e}=0,
\]
where \(\|\rho_e\|_{L^\infty(e)}\lesssim h_e^2\) and
\(\|\partial_{t_e}\rho_e\|_{L^\infty(e)}\lesssim h_e\).
Writing \(\rho_e':=\partial_{t_e}\rho_e\), the corresponding frame is
\begin{equation}
 \bm t\circ\Phi_e
 =\frac{\bm t_e+\rho_e'\bm n_e}{\sqrt{1+|\rho_e'|^2}},
 \qquad
 \bm n\circ\Phi_e
 =\frac{\bm n_e-\rho_e'\bm t_e}{\sqrt{1+|\rho_e'|^2}}.
 \label{eq:boundary-frame-formulas}
\end{equation}
Consequently, with one-sided values used at geometric corners,
\(\|\bm n\circ\Phi_e-\bm n_e\|_{L^\infty(e)}
+\|\bm t\circ\Phi_e-\bm t_e\|_{L^\infty(e)}\lesssim h_e\).
For \(j=0,1\), the standard chain rule gives
\begin{equation}
 \|v\circ\Phi_e\|_{H^j(e)}
 \simeq\|v\|_{H^j(\Gamma_e)}
 \quad\forall v\in H^j(\Gamma_e),
 \label{eq:boundary-pullback-estimates}
\end{equation}
with constants independent of \(e\) and \(h\).

\paragraph{Boundary-strip}
For \(e\in\mathcal E_h^\partial\), define
\[
 \mathcal S_e:=\{\bm x+r\bm n_e:\bm x\in e,\ 
       \min(0,\rho_e(\bm x))<r<\max(0,\rho_e(\bm x))\}.
\]
For sufficiently small \(h\), these sets are pairwise disjoint and lie
in \(\mathcal U\).
All strip estimates below allow \(\rho_e\) to change sign, and  integration
over \(\mathcal S_e\) uses the usual area measure.

\begin{lemma}[boundary-strip estimates]
\label{lem:boundary-strip-estimates}
For every \(v\in H^1(\widetilde\Omega)\) and
\(e\in\mathcal E_h^\partial\),
\begin{align}
 \|v|_e-v\circ\Phi_e\|_{0,e}^2
 &\lesssim h_e^2\|\partial_{n_e}v\|_{0,\mathcal S_e}^2,
 \label{eq:boundary-trace-transfer}\\
 \|v\|_{0,\mathcal S_e}^2
 &\lesssim h_e^2\|v\|_{0,e}^2
       +h_e^4\|\partial_{n_e}v\|_{0,\mathcal S_e}^2,
 \label{eq:boundary-strip-estimate}
\end{align}
where \(v\circ\Phi_e\) denotes the pullback of the trace on
\(\Gamma_e\). The hidden constants are independent of \(e\) and \(h\).
\end{lemma}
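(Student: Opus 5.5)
The plan is to prove both estimates by density: take \(v\in C^\infty(\overline{\widetilde\Omega})\), which is dense in \(H^1(\widetilde\Omega)\). Both sides of each inequality are continuous in the \(H^1(\widetilde\Omega)\) norm, by the trace bound \eqref{eq:uniform-trace} (applied on \(e\) and on \(\Gamma_e\), together with \eqref{eq:boundary-pullback-estimates}) and by the fact that the strip integrals are dominated by \(H^1(\widetilde\Omega)\) norms. For smooth \(v\), I will work in the local coordinates adapted to \(e\): arclength \(s\) along \(e\) and offset \(r\) in the direction \(\bm n_e\). Then \(\bm x(s)+r\bm n_e\) maps the region \(\{(s,r):\min(0,\rho_e)<r<\max(0,\rho_e)\}\) onto \(\mathcal S_e\). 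This map is an affine isometry, so its Jacobian equals \(1\) and area measure on \(\mathcal S_e\) is simply \(\mathrm ds\,\mathrm dr\). That is the main simplification over curvilinear coordinates. Pairwise disjointness of the strips is not needed here; each estimate is purely local to \(e\).

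For \eqref{eq:boundary-trace-transfer}, fix \(s\) and apply the fundamental theorem of calculus along the normal segment:
\[
v(\bm x(s))-v(\bm x(s)+\rho_e(s)\bm n_e)=-\int_0^{\rho_e(s)}\partial_{n_e}v(\bm x(s)+r\bm n_e)\,\mathrm dr .
\]
This is valid for either sign of \(\rho_e\), since the integral runs over the segment between \(0\) and \(\rho_e(s)\). Cauchy--Schwarz then bounds the square by
\[
|\rho_e(s)|\int_{\min(0,\rho_e)}^{\max(0,\rho_e)}|\partial_{n_e}v|^2\,\mathrm dr .
\]
Next use \(|\rho_e|\lesssim h_e^2\) and integrate in \(s\) over \(e\). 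By Fubini and the unit Jacobian this gives \(h_e^2\|\partial_{n_e}v\|_{0,\mathcal S_e}^2\). Here \(v\circ\Phi_e\) is exactly \(s\mapsto v(\bm x(s)+\rho_e(s)\bm n_e)\), the pullback of the trace on \(\Gamma_e\). The estimate therefore holds even with the sharper factor \(h_e^2\) coming from \(\|\rho_e\|_\infty\).

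For \eqref{eq:boundary-strip-estimate}, take a point \(\bm x(s)+r\bm n_e\) in the strip. The same identity along the segment from \(0\) to \(r\) gives
\[
|v(\bm x(s)+r\bm n_e)|^2\le 2|v(\bm x(s))|^2+2|\rho_e(s)|\int|\partial_{n_e}v|^2\,\mathrm dr' ,
\]
where the \(r'\)-integral is over the full fiber. Integrating in \(r\) over a fiber of length \(|\rho_e(s)|\lesssim h_e^2\) yields
\[
h_e^2|v(\bm x(s))|^2+h_e^4\int|\partial_{n_e}v|^2\,\mathrm dr' .
\]
Integrating in \(s\) and applying Fubini then gives the claim. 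In fact the first term carries \(h_e^2\) and the second \(h_e^4\), exactly as stated.

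The only delicate point is the density and trace justification. The pullback \(v\circ\Phi_e\) must be understood through the trace on \(\Gamma_e\), which is a Lipschitz graph since \(\rho_e\in W^{1,\infty}\). I would verify this by noting that for smooth \(v\) the two definitions agree, and that both maps \(v\mapsto v\circ\Phi_e\in L^2(e)\) and \(v\mapsto\partial_{n_e}v\in L^2(\mathcal S_e)\) are bounded on \(H^1(\widetilde\Omega)\), by \eqref{eq:uniform-trace} and \eqref{eq:boundary-pullback-estimates}. Passing to the limit then extends both estimates to all of \(H^1(\widetilde\Omega)\).
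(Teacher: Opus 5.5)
Your proof is correct and follows the paper's argument: the fundamental theorem of calculus along each normal fiber, Cauchy--Schwarz, the bound \(\|\rho_e\|_{L^\infty(e)}\lesssim h_e^2\), integration over the strip, and density. The only difference is cosmetic. You use the unscaled offset \(r\) with unit Jacobian, whereas the paper rescales each fiber to \(\theta\in(0,1)\) with Jacobian \(|\rho_e(s)|\); both give exactly the stated factors \(h_e^2\) and \(h_e^4\), so the first factor is the stated one rather than a sharper one.
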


\begin{proof}
Fix an oriented edge \(e=[\bm z_0,\bm z_1]\) and write
\(\bm x(s):=\bm z_0+s\bm t_e\), \(0<s<h_e\), for its
parametrization. Set \(\rho(s):=\rho_e(\bm x(s))\) and parametrize the
strip by \(\bm y(s,\theta):=\bm x(s)+\theta\rho(s)\bm n_e\),
\(0<\theta<1\).
Since \(\bm t_e\) and \(\bm n_e\) are orthonormal, the absolute
Jacobian is \(|\rho(s)|\).
For smooth \(v\), set \(w(s,\theta):=v(\bm y(s,\theta))\).
The fundamental theorem of calculus along each normal segment gives
\(w(s,\theta)-w(s,0)
=\rho(s)\int_0^\theta(\partial_{n_e}v)(\bm y(s,\tau))\,d\tau\).
Taking \(\theta=1\), applying Cauchy--Schwarz, and integrating in \(s\),
we obtain
\[
\begin{aligned}
 \|v|_e-v\circ\Phi_e\|_{0,e}^2
 &\le\int_0^{h_e}|\rho(s)|^2\int_0^1
       |(\partial_{n_e}v)(\bm y(s,\tau))|^2\,d\tau\,ds\\
 &\le\|\rho_e\|_{L^\infty(e)}
       \|\partial_{n_e}v\|_{0,\mathcal S_e}^2.
\end{aligned}
\]
Thus \(\|\rho_e\|_{L^\infty(e)}\lesssim h_e^2\) proves
\eqref{eq:boundary-trace-transfer}.
For \(0<\theta<1\), the same identity gives
\[
 |w(s,\theta)|^2
 \le 2|w(s,0)|^2+2|\rho(s)|^2\int_0^1
       |(\partial_{n_e}v)(\bm y(s,\tau))|^2\,d\tau.
\]
Multiplying by \(|\rho(s)|\) and integrating in \(s\) and \(\theta\)
yields
\[
 \|v\|_{0,\mathcal S_e}^2
 \le 2\|\rho_e\|_{L^\infty(e)}\|v\|_{0,e}^2
     +2\|\rho_e\|_{L^\infty(e)}^2
       \|\partial_{n_e}v\|_{0,\mathcal S_e}^2,
\]
which proves \eqref{eq:boundary-strip-estimate}. The general case follows by density.
\end{proof}
 \section{Boundary-corrected mixed finite element method} \label{sec:discrete-method}

We first recall the
\(H(\DivDiv;\mathbb S)\)-conforming triangular element given by
F\"uhrer and Heuer \cite{FuehrerHeuer2025}, and then introduce the
boundary-constrained space used in the discrete method.

\subsection{The F\"uhrer--Heuer triangular element}
\label{subsec:fh-element}

Let \(K\in\mathcal T_h\), and denote its sets of edges and vertices by
\(\mathcal E(K)\) and \(\mathcal V(K)\), respectively. We orient
\(\partial K\) so that \(K\) lies to its right. On each open edge, let
\(\bm t\) and \(\bm n\) denote the resulting constant unit tangent and outward
normal. On a boundary edge \(e\), they coincide with \(\bm t_e\) and \(\bm n_e\)
fixed in Section~\ref{subsec:continuous-discrete-geometry}. For a sufficiently
regular symmetric tensor field \(\bm N\), define
\begin{equation}
\label{eq:boundary-quantities}
N_{nn}:=\bm n^\top\bm N\bm n,
\quad
N_{nt}:=\bm n^\top\bm N\bm t,
\quad
q_n(\bm N):=\bm n\cdot\Div\bm N+\partial_t N_{nt}.
\end{equation}
For \(\bm z\in\mathcal V(K)\), let \(e_{\bm z}^-\) and \(e_{\bm z}^+\) denote
the edges ending and starting at \(\bm z\), respectively, and define
\(c_{\bm z}(\bm N):=
(N_{nt}|_{e_{\bm z}^-})(\bm z)-(N_{nt}|_{e_{\bm z}^+})(\bm z)\).
Two integrations by parts, followed by tangential integration by
parts on every edge, give
\begin{align}
&(\DivDiv\bm N,v)_K-(\bm N,D^2v)_K
\notag\\
&\qquad=
\sum_{e\in\mathcal E(K)}
\left(
\int_e q_n(\bm N)v\,\mathrm ds
-
\int_e N_{nn}\partial_n v\,\mathrm ds
\right)
-
\sum_{\bm z\in\mathcal V(K)}c_{\bm z}(\bm N)v(\bm z),
\label{eq:second-green-identity}
\end{align}
for \(\bm N\in C^2(\overline K;\mathbb S)\) and \(v\in H^2(K)\). This
identity displays the edge and vertex traces underlying the degrees
of freedom of the \(H(\DivDiv;\mathbb S)\)-conforming element.

Let \(\bm x:=(x_1,x_2)^\top\).  The Raviart--Thomas space and the
local tensor space are
\begin{equation}
\label{eq:fh-local-space}
\operatorname{RT}^k(K)
:=
\mathbb P_k(K;\mathbb R^2)+\bm x\mathbb P_k(K),
\quad 
X(K)
:=
\operatorname{sym}
\bigl(
\operatorname{RT}^0(K)\otimes\operatorname{RT}^1(K)
\bigr).
\end{equation}
Here, \(U\otimes V\) denotes the span of
\(\{\bm\phi\bm\psi^\top:\bm\phi\in U,\ \bm\psi\in V\}\), and
\(\operatorname{sym}\bm Q:=(\bm Q+\bm Q^\top)/2\).

For \(e\in\mathcal E(K)\), let \(s\in[0,h_e]\) be the arclength measured
from its initial endpoint, and set
\(\ell_{0,e}:=1\) and \(\ell_{1,e}:=\ell_e:=2s/h_e-1\).
The degrees of freedom are
\begin{subequations}
\label{eq:fh-dofs}
\begin{align}
d_{e,k}^{nn}(\bm N)
&:=
\frac{(N_{nn},\ell_{k,e})_e}
     {(\ell_{k,e},\ell_{k,e})_e},
&&
e\in\mathcal E(K),\quad k=0,1,
\label{eq:fh-dofs-nn}
\\
d_{e,k}^{q}(\bm N)
&:=
(q_n(\bm N),\ell_{k,e})_e,
&&
e\in\mathcal E(K),\quad k=0,1,
\label{eq:fh-dofs-q}
\\
d_{\bm z}^c(\bm N)
&:=
c_{\bm z}(\bm N),
&&
\bm z\in\mathcal V(K).
\label{eq:fh-dofs-corner}
\end{align}
\end{subequations}

F\"uhrer and Heuer \cite[Proposition~4 and Theorem~5]{FuehrerHeuer2025}
proved that the degrees of freedom in \eqref{eq:fh-dofs} are unisolvent
for \(X(K)\). The local space satisfies
\(\mathbb P_1(K;\mathbb S)\subset X(K)\subset\mathbb P_3(K;\mathbb S)\)
and \(\DivDiv X(K)=\mathbb P_1(K)\). Moreover, for
\(\bm N\in X(K)\) and \(e\in\mathcal E(K)\),
\(N_{nn}|_e,q_n(\bm N)|_e\in\mathbb P_1(e)\).

Before imposing the boundary constraints, the global moment and
displacement spaces are
\[
\begin{aligned}
X(\mathcal T_h)
&:=\{\bm N_h\in H(\DivDiv,\Omega_h;\mathbb S):
\bm N_h|_K\in X(K)\ \forall K\in\mathcal T_h\},\\
V_h
&:=\{v_h\in L^2(\Omega_h):
v_h|_K\in\mathbb P_1(K)\ \forall K\in\mathcal T_h\}.
\end{aligned}
\]
Conformity in \(X(\mathcal T_h)\) is enforced by matching the edge traces
and balancing the interior corner forces.
Let \(P_h^1:L^2(\Omega_h)\to V_h\) denote the elementwise \(L^2\)-orthogonal
projection, and write \(H^s(\mathcal T_h;\mathbb S)\) for the broken
tensor-valued Sobolev space.

For \(1/2<r\leq1\), F\"uhrer and Heuer
\cite[Proposition~10]{FuehrerHeuer2025} define the canonical interpolant
\(\Pi_h^{\mathrm{dDiv}}\bm N\in X(\mathcal T_h)\), for
\(\bm N\in H(\DivDiv,\Omega_h;\mathbb S)\cap
H^{1+r}(\mathcal T_h;\mathbb S)\), by matching the degrees of freedom in
\eqref{eq:fh-dofs}. It is a projection onto \(X(\mathcal T_h)\) and satisfies
\begin{subequations}
\begin{align}
\DivDiv\Pi_h^{\mathrm{dDiv}}\bm N
&=
P_h^1\DivDiv\bm N,
\label{eq:canonical-commuting}
\\
\|\bm N-\Pi_h^{\mathrm{dDiv}}\bm N\|_{\Omega_h}
&\lesssim
h^{1+r}\|\bm N\|_{1+r,\Omega_h}.
\label{eq:canonical-approximation}
\end{align}
\end{subequations}
The approximation estimate requires
\(\bm N\in H^{1+r}(\Omega_h;\mathbb S)
\cap H(\DivDiv,\Omega_h;\mathbb S)\).

\subsection{Boundary constraints and discrete formulation}
\label{subsec:boundary-spaces-method}

The standard simply supported moment space
and its corrected subspace are
\begin{equation}
\label{eq:boundary-constrained-spaces}
\begin{aligned}
\Sigma_h^{nn}
&:=
\left\{
\bm N_h\in X(\mathcal T_h):
N_{h,nn}|_e=0
\quad\forall e\in\mathcal E_h^\partial
\right\},\\
\Sigma_h^{nn,q}
&:=
\left\{
\bm N_h\in\Sigma_h^{nn}:
\int_e q_n(\bm N_h)\,\mathrm ds=0
\quad\forall e\in\mathcal E_h^\partial
\right\}.
\end{aligned}
\end{equation}
Since \(q_n(\bm N_h)|_e\in\mathbb P_1(e)\), the additional constraint
removes only its constant mode. In terms of the degrees of freedom,
\(d_{e,0}^{nn}\) and \(d_{e,1}^{nn}\) are fixed on every boundary edge,
as is \(d_{e,0}^q\). The odd shear degree of freedom and all boundary
corner-force degrees of freedom remain free. Thus, every discrete boundary
edge receives the same treatment. The method is implemented directly
through the boundary degrees of freedom and requires neither a boundary
parametrization, curvature evaluation, nor a classification of the
physical boundary.

Let \(f\in L^2(\Omega)\) be the prescribed load, and choose a fixed
extension \(\widetilde f\in L^2(\widetilde\Omega)\) satisfying
\(\widetilde f|_\Omega=f\). We set
\begin{equation}
\label{eq:discrete-load}
f_h:=P_h^1(\widetilde f|_{\Omega_h})\in V_h.
\end{equation}
For inner approximations, this reduces to
\(f_h=P_h^1(f|_{\Omega_h})\). If \(\Omega_h\not\subset\Omega\), the
assembly additionally uses the chosen extension on
\(\Omega_h\setminus\Omega\). The zero extension is an admissible choice.
The discrete method is to find
\(
(\bm M_h,u_h)\in\Sigma_h^{nn,q}\times V_h
\)
such that
\begin{subequations}
\label{eq:discrete-method}
\begin{alignat}{2}
(\mathbb A_\sigma\bm M_h,\bm N_h)_{\Omega_h}
-
(u_h,\DivDiv\bm N_h)_{\Omega_h}
&=0
&\quad&
\forall \bm N_h\in\Sigma_h^{nn,q},
\label{eq:discrete-method-a}
\\
(\DivDiv\bm M_h,v_h)_{\Omega_h}
&=
(f_h,v_h)_{\Omega_h}
&\quad&
\forall v_h\in V_h.
\label{eq:discrete-method-b}
\end{alignat}
\end{subequations}

\begin{remark}[nonhomogeneous data]
\label{rem:nonhomogeneous-data}
For sufficiently regular data \(u=g_D\) and \(M_{nn}=g_{nn}\), choose an
extension \(\widetilde g_{nn}\) to a neighborhood of \(\Gamma\) and
prescribe the two normal--normal degrees of freedom on each boundary
edge by the \(\mathbb P_1(e)\)-moments of
\(\widetilde g_{nn}|_e\).
The extension need not be the closest-point extension. Since the
boundary vertices lie on \(\Gamma\), the displacement datum is
represented by its continuous piecewise affine interpolant \(g_{D,h}\)
on \(\Gamma_h\), defined by the vertex values of \(g_D\). It enters the
first equation through the effective-shear and corner terms in
\eqref{eq:second-green-identity}. The effective-shear constraint
\(\int_e q_n(\bm M_h)\,\mathrm ds=0\) remains unchanged on every $e \in \mathcal E_h^\partial$, since the associated displacement variations are homogeneous.
\end{remark}
 \section{Boundary corrections}
\label{sec:boundary-correction}

The discrete moment space carries the boundary restrictions
\[
  N_{h,nn}|_e=0
  \quad\forall e\in\mathcal E_h^\partial,
  \qquad
  \int_e q_n(\bm N_h)\,\mathrm ds=0
  \quad\forall e\in\mathcal E_h^\partial.
\]
Consequently, the stability and error analysis requires more than the
standard properties of the unconstrained F\"uhrer--Heuer space
\(X(\mathcal T_h)\). Starting from an arbitrary tensor in this space,
we construct two successive boundary corrections,
\begin{equation}
\label{eq:correction-chain}
  X(\mathcal T_h)
  \xrightarrow{\;\Pi_h^{nn}\;}
  \Sigma_h^{nn}
  \xrightarrow{\;\Pi_h^{nn,q}\;}
  \Sigma_h^{nn,q}.
\end{equation}
The first correction removes the two normal--normal moments on each
boundary edge. The second removes the zeroth-order effective-shear
moment on every boundary edge while preserving the normal--normal
condition. Both corrections are linear and preserve the discrete double
divergence. Their norms are controlled, with \(h\)-independent constants,
by the boundary moments being removed. These properties are
used twice below: first to transfer a stable right inverse of \(\DivDiv\)
to the corrected space, and then to turn the canonical interpolant into
an admissible comparison function for the error analysis. We follow the
order in \eqref{eq:correction-chain}: the two normal--normal modes are
removed first, and the effective-shear moment is removed afterwards.
The two corrections act on independent boundary modes and therefore
commute.

For \(e=[\bm{z}_i,\bm{z}_j]\in\mathcal E_h^\partial\), let \(K_e\) be
its adjacent element and \(\lambda_i,\lambda_j\) the corresponding
barycentric coordinates. Define the edge bubble \(b_e:=\lambda_i\lambda_j\)
on \(K_e\), extended by zero elsewhere. We use the edge orientations and
trace notation given in Section~\ref{sec:discrete-method}.

\begin{lemma}\label{lem:scurl-trace}
Let \(e\) be an oriented straight edge with unit tangent \(\bm t\) and unit
normal \(\bm n\). For every sufficiently smooth vector field \(\bm p\) in a
neighborhood of \(e\), one has
\begin{equation}\label{eq:scurl-trace}
  (\sCurl\bm p)_{nn}
  =-\partial_t(\bm p\cdot\bm n),
  \qquad
  q_n(\sCurl\bm p)
  =-\partial_{tt}(\bm p\cdot\bm t).
\end{equation}
\end{lemma}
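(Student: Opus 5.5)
The plan is to compute directly in an edge-adapted Cartesian frame, after reducing to that frame by rotation invariance. The only convention that matters is how \(\bm t\) and \(\bm n\) are related. Since the domain lies to the right of the oriented edge, the outward normal points to the left of \(\bm t\). Hence \(\bm n=R\bm t\), where \(R\) is the counterclockwise rotation by \(\pi/2\).

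First I rewrite \(\Curl\) in coordinate-free form. Reading off the rows of the definition gives \((\Curl\bm p)\bm a=(\bm a\cdot\nabla^{\perp})\bm p\), i.e., the directional derivative of \(\bm p\) in the direction \(R\bm a=(-a_2,a_1)^\top\). Thus \(\Curl\bm p=(\nabla\bm p)R\), where \(\nabla\bm p\) is the Jacobian. Under a rigid rotation \(Q\) of coordinates, \(\bm p\mapsto Q^\top\bm p(Q\,\cdot)\), this expression transforms as \(Q^\top(\Curl\bm p)Q\), because \(QR=RQ\). The same holds for \(\sCurl\bm p\). The quantities \(N_{nn}\), \(N_{nt}\), \(\bm n\cdot\Div\bm N\) and \(\partial_t\) are frame-invariant, and so are the right-hand sides \(\partial_t(\bm p\cdot\bm n)\) and \(\partial_{tt}(\bm p\cdot\bm t)\). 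It therefore suffices to take \(\bm t=\bm e_1\) and \(\bm n=\bm e_2=R\bm e_1\). In this frame \(p_1=\bm p\cdot\bm t\), \(p_2=\bm p\cdot\bm n\), \(\partial_t=\partial_1\) and \(\partial_n=\partial_2\).

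The normal--normal identity is then immediate. \((\sCurl\bm p)_{nn}\) is the \((2,2)\) entry of \(\Curl\bm p\), which equals \(-\partial_1p_2=-\partial_t(\bm p\cdot\bm n)\). For the effective shear I compute the two ingredients of \(q_n\) separately.

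\emph{Row-wise divergence.} Each row of \(\Curl\bm p\) is divergence free, since \(\partial_1\partial_2p_i-\partial_2\partial_1p_i=0\). Therefore \(\Div\sCurl\bm p=\tfrac12\Div(\Curl\bm p)^\top\). A short computation gives \(\Div(\Curl\bm p)^\top=(\partial_2\Div\bm p,\,-\partial_1\Div\bm p)^\top\). Hence
\[
\bm n\cdot\Div\sCurl\bm p=-\tfrac12\partial_t\bigl(\partial_tp_1+\partial_np_2\bigr).
\]

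\emph{Tangential term.} The off-diagonal entry of the symmetric part is \(N_{nt}=\tfrac12(\partial_2p_2-\partial_1p_1)\). This gives
\[
\partial_tN_{nt}=\tfrac12\bigl(\partial_t\partial_np_2-\partial_{tt}p_1\bigr).
\]

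Adding the two expressions, the mixed terms \(\pm\tfrac12\partial_t\partial_np_2\) cancel, and what remains is \(q_n(\sCurl\bm p)=-\partial_{tt}p_1=-\partial_{tt}(\bm p\cdot\bm t)\).

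The main obstacle is bookkeeping of signs rather than any conceptual difficulty. The sign of both identities hinges on \(\bm n=R\bm t\) being a counterclockwise rotation, which is the content of the orientation convention, and on the rotation covariance of \(\Curl\). I would double-check both points by testing simple fields such as \(\bm p=(0,x_1)^\top\) and \(\bm p=(x_1^2,0)^\top\) before finalizing. Smoothness (\(C^2\) near \(e\)) is only used to justify the commuting of mixed partial derivatives and taking traces on \(e\).
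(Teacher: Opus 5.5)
Your proposal is correct and follows the paper's proof: reduce to the frame $\bm t=(1,0)^\top$, $\bm n=(0,1)^\top$, expand $\sCurl\bm p$, and observe that the mixed derivatives cancel in $q_n$, leaving $-\partial_{11}p_1$. Your additional check that this reduction is a proper rotation makes explicit a step the paper leaves implicit. That check uses $\bm n=R\bm t$ from the orientation convention and $QR=RQ$, and it is where the signs of both identities come from.
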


\begin{proof}
Choose local coordinates such that
\(\bm t=(1,0)^\top\) and \(\bm n=(0,1)^\top\). Direct expansion of the symmetric Curl
gives $(\sCurl\bm p)_{nn}
  =-\partial_1p_2
  =-\partial_t(\bm p\cdot\bm n)$.
Substituting the components of \(\sCurl\bm p\) into
$\bm n\cdot\Div(\sCurl\bm p)+\partial_t(\sCurl\bm p)_{nt}$,
we find that the mixed derivatives cancel, leaving $q_n(\sCurl\bm p)
  =-\partial_{11}p_1
  =-\partial_{tt}(\bm p\cdot\bm t)$.
\end{proof}

For later use, let
\(\bm p_h\in C^0(\overline{\Omega}_h;\mathbb R^2)\) be piecewise quadratic,
that is, \(\bm p_h|_K\in \mathbb{P}_2(K;\mathbb R^2)\) for every
\(K\in\mathcal T_h\). Then the elementary inclusion reads
\begin{equation}
\label{eq:scurl-kernel}
  \sCurl\bm p_h\in X(\mathcal T_h)\cap\ker(\DivDiv).
\end{equation}

For \(\bm N_h\in X(\mathcal T_h)\), write its boundary data uniquely as
\begin{equation}\label{eq:holdall-three-data}
  \left.N_{h,nn}\right|_e=\mu_e^0+\mu_e^1\ell_e
  \quad \text{for }e\in\mathcal E_h^\partial,
  \qquad
  \mu_e^q:=\int_e q_n(\bm N_h)\,\mathrm{d}s
  \quad\text{for }e\in\mathcal E_h^\partial,
\end{equation}
where \(\ell_e\in \mathbb{P}_1(e)\) is the oriented odd polynomial introduced in
Section~\ref{subsec:fh-element}.  With the normalization used there,
it satisfies \(-\partial_t(h_eb_e)=\ell_e\).  This
decomposition identifies the three boundary defects to be corrected.
They are removed in two stages.

\subsection{Normal--normal boundary correction}
\label{sec:normal-normal-correction}

We first remove the constant and odd normal--normal modes \(\mu_e^0\)
and \(\mu_e^1\) in \eqref{eq:holdall-three-data}, while leaving \(\mu_e^q\) unchanged.

\subsubsection{Local correction of the odd mode}
\label{sec:odd-normal-correction}

Define
\(\bm p_h^1:=\sum_{e\in\mathcal E_h^\partial}h_e\mu_e^1b_e\bm n_e\).

\begin{lemma}[normal-normal odd-mode correction]
\label{lem:odd-normal-lifting}
The tensor correction generated by \(\bm p_h^1\) satisfies $ \sCurl\bm p_h^1
\in X(\mathcal T_h)\cap\ker(\DivDiv)$ and 
\begin{align}
  \left.(\sCurl\bm p_h^1)_{nn}\right|_e
  &=\mu_e^1\ell_e,
  \quad
  \left.q_n(\sCurl\bm p_h^1)\right|_e=0
  &&\forall e\in\mathcal E_h^\partial,
  \label{eq:odd-normal-traces}
  \\
  \lVert\sCurl\bm p_h^1\rVert_{L^2(\Omega_h)}
  &\lesssim
  \Big(
    \sum_{e\in\mathcal E_h^\partial}
    h_e^2|\mu_e^1|^2
  \Big)^{1/2}.
  \label{eq:odd-normal-bound}
\end{align}
\end{lemma}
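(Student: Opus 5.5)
The plan is to check the three claims directly from the structure of \(\bm p_h^1\), using Lemma~\ref{lem:scurl-trace} for the traces and a scaling argument for the norm.

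\emph{Membership.} Each \(b_e=\lambda_i\lambda_j\) is quadratic on \(K_e\). It vanishes on the two edges of \(K_e\) other than \(e\), because one of \(\lambda_i,\lambda_j\) is zero there. Its zero extension is therefore continuous. Hence \(\bm p_h^1\) is a continuous piecewise quadratic vector field, and the inclusion \eqref{eq:scurl-kernel} gives \(\sCurl\bm p_h^1\in X(\mathcal T_h)\cap\ker(\DivDiv)\).

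\emph{Traces.} Fix \(e\in\mathcal E_h^\partial\). First consider contributions from other boundary edges \(e'\neq e\). The support of \(b_{e'}\) is \(K_{e'}\). If \(K_{e'}\neq K_e\), the field \(\sCurl(h_{e'}\mu_{e'}^1b_{e'}\bm n_{e'})\) vanishes identically on \(K_e\), so its trace from \(K_e\) on \(e\) is zero. If \(K_{e'}=K_e\), then \(b_{e'}\) vanishes on \(e\). Since \(\bm n_{e'}\) is constant, the quantities \(\bm p\cdot\bm n\) and \(\bm p\cdot\bm t\) for this field are multiples of \(b_{e'}\), which is zero along \(e\). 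Their tangential derivatives along \(e\) therefore vanish too. Lemma~\ref{lem:scurl-trace} then shows that this contribution has zero \(nn\)-trace and zero \(q_n\) on \(e\).

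This leaves the term with index \(e\) itself, for which \(\bm p=h_e\mu_e^1b_e\bm n_e\). We have \(\bm p\cdot\bm t_e=0\), so \(q_n=-\partial_{tt}(0)=0\). We also have \(\bm p\cdot\bm n_e=h_e\mu_e^1b_e\), so
\[
(\sCurl\bm p)_{nn}=-\partial_t(h_eb_e)\mu_e^1=\mu_e^1\ell_e
\]
by the stated normalization \(-\partial_t(h_eb_e)=\ell_e\). This gives \eqref{eq:odd-normal-traces}. Note that \(q_n\) involves only the one-sided trace from \(K_e\), since \(e\) is a boundary edge. The only thing to watch here is that the traces are taken tangentially along \(e\), so using the edge-local identities of Lemma~\ref{lem:scurl-trace} is legitimate.

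\emph{Norm bound.} On each element the field \(\sCurl\bm p_h^1\) involves only first derivatives of \(b_e\). Standard scaling gives \(\|\nabla b_e\|_{L^\infty(K_e)}\lesssim h_{K_e}^{-1}\), and hence
\[
\|\nabla(h_e\mu_e^1b_e\bm n_e)\|_{K_e}\lesssim h_e|\mu_e^1|\,h^{-1}_{K_e}|K_e|^{1/2}\lesssim h_e|\mu_e^1|,
\]
using shape regularity, \(h_e\simeq h_{K_e}\), and \(|K_e|^{1/2}\simeq h_{K_e}\). Each element \(K\) is adjacent to at most three boundary edges, so the supports overlap boundedly. Summing the squared local bounds via the triangle inequality and this finite overlap yields \eqref{eq:odd-normal-bound}. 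I expect no real obstacle in this lemma. The only mildly delicate point is the case where a single element carries two boundary edges, which the vanishing of \(b_{e'}\) on \(e\) handles.
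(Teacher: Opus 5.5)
Your proposal is correct and follows the paper's proof essentially step by step. Membership comes from continuity of the piecewise quadratic potential together with \eqref{eq:scurl-kernel}; the traces come from Lemma~\ref{lem:scurl-trace} with \(-\partial_t(h_eb_e)=\ell_e\); and the norm bound comes from \(\|\nabla b_e\|_{L^\infty(K_e)}\lesssim h_e^{-1}\) with finite overlap. Your separate treatment of an element carrying two boundary edges just spells out the paper's remark that all other summands vanish on \(e\).
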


\begin{proof}
Since \(\bm p_h^1\) is continuous and piecewise quadratic, its
symmetric curl belongs to \(X(\mathcal T_h)\). For a fixed boundary
edge \(e\), the \(e\)-th summand satisfies
\((h_e\mu_e^1b_e\bm n_e)\cdot\bm n_e=h_e\mu_e^1b_e\) and
\((h_e\mu_e^1b_e\bm n_e)\cdot\bm t_e=0\). Lemma~\ref{lem:scurl-trace} and
the identity \(-\partial_t(h_eb_e)=\ell_e\) therefore give the two identities in
\eqref{eq:odd-normal-traces}; all other summands vanish on \(e\).

Finally, shape regularity, finite overlap, and
\(\lVert\nabla b_e\rVert_{L^\infty(K_e)}\lesssim h_e^{-1}\) give
\[
  \lVert\sCurl\bm p_h^1\rVert_{L^2(\Omega_h)}^2
  \lesssim
  \sum_{e\in\mathcal E_h^\partial}
  \left\lVert
    \sCurl(h_e\mu_e^1b_e\bm n_e)
  \right\rVert_{L^2(K_e)}^2
  \lesssim
  \sum_{e\in\mathcal E_h^\partial}h_e^2|\mu_e^1|^2.
\]
This proves \eqref{eq:odd-normal-bound}.
\end{proof}

\subsubsection{Global correction of the constant mode}
\label{sec:constant-normal-correction}

Let \(S_h^1:=\{p_h\in C^0(\overline\Omega_h):
p_h|_K\in\mathbb P_1(K)\ \forall K\in\mathcal T_h\}\).
The preceding edge bubbles cannot correct the constant normal--normal mode.
We enforce closure around each component using stable tangential increments,
then lift the boundary potential into \([S_h^1]^2\), keeping the correction
in \(\ker(\DivDiv)\).

\begin{lemma}[normal--normal constant-mode correction]
\label{lem:distributed-normal-lifting}
Let \(\mu^0=(\mu_e^0)_{e\in\mathcal E_h^\partial}\) be a family of constants
assigned to the boundary edges. Then there exists \(\bm p_h^0\in [S_h^1]^2\)
such that $  \sCurl\bm p_h^0 \in X(\mathcal T_h)\cap\ker(\DivDiv)$ and 
\begin{align}
  \left.(\sCurl\bm p_h^0)_{nn}\right|_e
  =\mu_e^0,
  \qquad
  \left.q_n(\sCurl\bm p_h^0)\right|_e=0
  &&\text{for every }e\in\mathcal E_h^\partial,
  \label{eq:distributed-normal-traces}
  \\
  \lVert\sCurl\bm p_h^0\rVert_{L^2(\Omega_h)}
  \lesssim
  \Big(
    \sum_{e\in\mathcal E_h^\partial}
    h_e|\mu_e^0|^2
  \Big)^{1/2}.
  \label{eq:distributed-normal-bound}
\end{align}
Moreover, the map \(\mu^0\mapsto\bm p_h^0\) is linear.
\end{lemma}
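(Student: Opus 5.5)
We look for \(\bm p_h^0\) whose boundary trace is piecewise affine and continuous on \(\Gamma_h\), and then extend it into the interior by a stable discrete lifting. By Lemma~\ref{lem:scurl-trace}, on a boundary edge \(e\) we need \(-\partial_{t}(\bm p_h^0\cdot\bm n_e)=\mu_e^0\) and \(\partial_{tt}(\bm p_h^0\cdot\bm t_e)=0\). For a continuous piecewise affine field the second condition holds automatically, and \(q_n\) of \(\sCurl\) of a piecewise linear field vanishes elementwise. So it suffices to prescribe the vertex values of \(\bm p_h^0\) on \(\Gamma_h\) such that, along each edge \(e=[\bm z_0,\bm z_1]\),
\[
(\bm p(\bm z_1)-\bm p(\bm z_0))\cdot\bm n_e=-h_e\mu_e^0 .
\]
The tangential increment \((\bm p(\bm z_1)-\bm p(\bm z_0))\cdot\bm t_e=:h_e\tau_e\) remains free. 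Writing the increment as \(\bm d_e:=h_e(\tau_e\bm t_e-\mu_e^0\bm n_e)\), we can walk around each component \(\Gamma_h^{(i)}\) and sum the increments. The only obstruction is closure: \(\sum_{e\subset\Gamma_h^{(i)}}\bm d_e=0\), which amounts to two scalar conditions per component.

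The closure step is the main obstacle, and the plan is to solve it uniformly. We choose \(\tau_e=\sum_{k=1,2}\alpha_k^{(i)}\phi_k(e)\) with fixed smooth profiles. Concretely, take \(\tau_e=\bm a^{(i)}\cdot\bm t_e\) for a constant vector \(\bm a^{(i)}\in\mathbb R^2\). Then closure becomes the \(2\times2\) linear system
\[
\Big(\sum_{e}h_e\bm t_e\bm t_e^\top\Big)\bm a^{(i)}=\sum_e h_e\mu_e^0\bm n_e .
\]
The matrix \(\sum_e h_e\bm t_e\bm t_e^\top\) approximates \(\int_{\Gamma^{(i)}}\bm t\bm t^\top\,\mathrm ds\). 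This is uniformly positive definite for a closed curve, since the tangent of a closed Lipschitz curve cannot stay in a single direction, and Hausdorff convergence together with the cyclic vertex ordering transfers this bound to \(\Gamma_h^{(i)}\) for small \(h\). Hence
\[
|\bm a^{(i)}|\lesssim\sum_e h_e|\mu_e^0|\lesssim\Big(\sum_e h_e|\mu_e^0|^2\Big)^{1/2}
\]
by Cauchy--Schwarz, using the bounded length of \(\Gamma_h\). The construction is linear in \(\mu^0\). After fixing the value at one base vertex per component to be zero, the boundary trace \(\bm g_h\in[S_h^1(\Gamma_h)]^2\) is determined. Its tangential derivative is \(\tau_e\bm t_e-\mu_e^0\bm n_e\), so \(|\bm g_h|_{1,\Gamma_h}^2\lesssim\sum_e h_e|\mu_e^0|^2\). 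Also \(\|\bm g_h\|_{L^\infty}\) is bounded by the total variation, which gives \(\|\bm g_h\|_{0,\Gamma_h}\lesssim(\sum_e h_e|\mu_e^0|^2)^{1/2}\) as well.

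Finally, we extend \(\bm g_h\) to \(\bm p_h^0\in[S_h^1]^2\) using the uniform discrete boundary lifting from Appendix~\ref{app:boundary-lifting}. This lifting is assumed stable as a map \(H^{1/2}(\Gamma_h)\to H^1(\Omega_h)\) with \(h\)-independent constant, which the uniformly Lipschitz family permits. It gives
\[
\|\sCurl\bm p_h^0\|_{\Omega_h}\le|\bm p_h^0|_{1,\Omega_h}\lesssim\|\bm g_h\|_{H^{1/2}(\Gamma_h)}\lesssim\|\bm g_h\|_{H^1(\Gamma_h)},
\]
which yields \eqref{eq:distributed-normal-bound}. Membership in \(X(\mathcal T_h)\cap\ker(\DivDiv)\) follows from \eqref{eq:scurl-kernel}, and the trace identities \eqref{eq:distributed-normal-traces} follow from Lemma~\ref{lem:scurl-trace} edge by edge. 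Since the lifting is linear, the whole map \(\mu^0\mapsto\bm p_h^0\) is linear.
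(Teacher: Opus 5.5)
Your argument is correct, and its architecture matches the paper's. You use edgewise increments with prescribed normal part $-h_e\mu_e^0\bm n_e$ and a free tangential part, impose a two-dimensional closure condition on each component of $\Gamma_h$, and finish with the discrete lifting of Lemma~\ref{lem:discrete-trace-lifting}.

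The genuine difference is the closure step.
\begin{itemize}
\item The paper selects three vertices whose chords $\bm g_1,\bm g_2$ form a uniformly nonsingular basis, obtained by perturbing three fixed noncollinear points of $\Gamma^{(i)}$. It then places constant tangential increments $\beta_1,\beta_2$ on the two arcs between them and zero elsewhere. This is explicit and localized, but it depends on a choice of points.
\item Your choice $\tau_e=\bm a^{(i)}\cdot\bm t_e$ is canonical and uses every edge. It is exactly the minimizer of $\sum_e h_e\tau_e^2$ subject to closure, and that sum is the tangential part of $\|\partial_t\bm p_h^\partial\|_{L^2(\Gamma_h)}^2$. So you get the energy-optimal closure at the cost of one $2\times2$ solve per component.
\end{itemize}

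In exchange, you need uniform coercivity of $T_h:=\sum_e h_e\bm t_e\bm t_e^\top$. Comparing with $\int_{\Gamma^{(i)}}\bm t\bm t^\top\,\mathrm ds$ is immediate for piecewise $C^{1,1}$ boundaries with resolved corners. However, the lemma feeds Theorem~\ref{thm:infsup}, which assumes only Assumption~\ref{ass:polygonal-boundary}, and there a direct argument is cleaner:
\begin{itemize}
\item For a unit vector $\bm v$, with $\bm z_{e,0},\bm z_{e,1}$ the endpoints of $e$, Cauchy--Schwarz gives $\bm v^\top T_h\bm v\ge|\Gamma_h^{(i)}|^{-1}\bigl(\sum_e|(\bm z_{e,1}-\bm z_{e,0})\cdot\bm v|\bigr)^2$.
\item The sum is the total variation of $\bm x\cdot\bm v$ around a closed polygon, so it is at least twice the width of the vertex set in direction $\bm v$.
\item Because the vertices lie on $\Gamma^{(i)}$ and become dense, this width converges uniformly in $\bm v$ to the width of $\Gamma^{(i)}$. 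That width is bounded below by a positive constant, since a closed curve is not contained in a line.
\item Finally, $|\Gamma_h^{(i)}|\le|\Gamma^{(i)}|$.
\end{itemize}

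Your normalization is an equally valid substitute for the paper's zero-mean normalization with \eqref{eq:mean-zero-discrete-trace-lifting}. You set a base vertex to zero, bound $L^\infty$ by the total variation, and then apply \eqref{eq:discrete-trace-lifting}.

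One correction: the appendix lifting is proved stable from $H^1(\Gamma_h)$, not from $H^{1/2}(\Gamma_h)$. Your chain ends at $\|\bm g_h\|_{H^1(\Gamma_h)}$ anyway, so cite \eqref{eq:discrete-trace-lifting} directly and drop the $H^{1/2}$ claim.
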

\begin{proof}
The construction is carried out independently on each connected component
of \(\Gamma_h\) and then assembled over the finitely many components. In
what follows, we work on one fixed connected component of \(\Gamma_h\).

Use the orientation fixed in Section~\ref{subsec:continuous-discrete-geometry},
label its vertices by
\(\bm{z}_0,\bm{z}_1,\ldots,\bm{z}_J=\bm{z}_0\), and write
\(e_i=[\bm{z}_i,\bm{z}_{i+1}]\), \(i=0,\ldots,J-1\). Let
\(h_i:=|e_i|\), \(\mu_i^0:=\mu_{e_i}^0\), and denote by \(\bm t_i\) and
\(\bm n_i\) the unit tangent and outward unit normal to \(e_i\). Set
\(A_h^2:=\sum_{i=0}^{J-1}h_i|\mu_i^0|^2\).

\medskip\noindent
{\it Step 1: boundary nodal increments.}
We first seek a continuous piecewise affine boundary potential
\(\bm p_h^\partial\). Let \(\bm p_i:=\bm p_h^\partial(\bm{z}_i)\in\mathbb R^2\) denote its
unknown vertex values, understood up to a common additive vector. Since
\(\bm p_h^\partial|_{e_i}\in \mathbb{P}_1(e_i;\mathbb R^2)\), one has
\(\partial_t\bm p_h^\partial|_{e_i}=(\bm p_{i+1}-\bm p_i)/h_i\) and
\(\partial_{tt}(\bm p_h^\partial\cdot\bm t_i)|_{e_i}=0\). Consequently,
\eqref{eq:scurl-trace} shows that the prescribed constant normal--normal
trace is equivalent to
\((\bm p_{i+1}-\bm p_i)\cdot\bm n_i=-h_i\mu_i^0\). Writing the free tangential
component as \(h_i\alpha_i\), the general admissible increment is
\begin{equation}
\label{eq:distributed-increments}
  \bm d_i
  :=
  \bm p_{i+1}-\bm p_i
  =
  -h_i\mu_i^0\bm n_i+h_i\alpha_i\bm t_i,
\end{equation}
where \(\alpha_i\) remains to be determined. For the vertex values to be
single-valued on the closed component, the increments must satisfy
\(\sum_{i=0}^{J-1}\bm d_i=0\). By
\eqref{eq:distributed-increments}, it is equivalent to
\begin{equation}
\label{eq:distributed-closure}
  \sum_{i=0}^{J-1}h_i\alpha_i\bm t_i
  =
  \sum_{i=0}^{J-1}h_i\mu_i^0\bm n_i
  =:\bm r_h.
\end{equation}
Since the component length is uniformly bounded, Cauchy--Schwarz gives
\(|\bm r_h|\le  (\sum_{i=0}^{J-1}h_i|\mu_i^0|^2)^{1/2}(\sum_{i=0}^{J-1}h_i)^{1/2}\lesssim A_h\).

Fix three cyclically ordered noncollinear points on the corresponding
physical boundary component. After a cyclic relabeling, a standard
perturbation argument gives, for all sufficiently small \(h\), indices
\(0\le j_1<j_2<j_3\le J-1\) such that
\(\bm g_1:=\bm{z}_{j_2}-\bm{z}_{j_1}\) and \(\bm g_2:=\bm{z}_{j_3}-\bm{z}_{j_2}\) form a
uniformly nonsingular basis of \(\mathbb R^2\). Since \(\Gamma\) has
only finitely many boundary components, there are constants
\(c_0,C_0,\delta_0>0\), independent of \(h\) and of the component, such that
\[
  c_0\le |\bm g_k|\le C_0,
  \quad k=1,2,
  \qquad
  |\det(\bm g_1,\bm g_2)|\ge\delta_0|\bm g_1||\bm g_2|.
\]
Uniform nonsingularity yields unique coefficients \((\beta_1,\beta_2)\)
satisfying
\begin{equation}\label{eq:distributed-beta}
  \beta_1\bm g_1+\beta_2\bm g_2
  =\bm r_h,
  \qquad
  |\beta_1|+|\beta_2|\lesssim |\bm r_h|\lesssim A_h.
\end{equation}

Set
\(\mathcal I_1:=\{j_1,\ldots,j_2-1\}\) and
\(\mathcal I_2:=\{j_2,\ldots,j_3-1\}\). 
Since
\(h_i\bm t_i=\bm{z}_{i+1}-\bm{z}_i\), telescoping gives
\(\bm g_k=\sum_{i\in\mathcal I_k}h_i\bm t_i\), \(k=1,2\). Set
\begin{equation}
\label{eq:distributed-alpha}
  \alpha_i
  :=
  \begin{cases}
    \beta_k,&i\in\mathcal I_k,\quad k=1,2,\\
    0,&\text{otherwise}.
  \end{cases}
\end{equation}
Equations \eqref{eq:distributed-beta} and
\eqref{eq:distributed-alpha} then imply \eqref{eq:distributed-closure}. Using
again the uniform bound on the component length gives
\begin{equation}
\label{eq:distributed-alpha-bound}
  \sum_{i=0}^{J-1}h_i|\alpha_i|^2
  =
  |\beta_1|^2\sum_{i\in\mathcal I_1}h_i
  +|\beta_2|^2\sum_{i\in\mathcal I_2}h_i
  \lesssim
  A_h^2.
\end{equation}

\medskip\noindent
{\it Step 2: construction of the boundary potential.}
Thanks to \(\sum_{i=0}^{J-1}\bm d_i=0\), the
nodal increments \(\bm p_h^\partial(\bm{z}_{i+1})-\bm p_h^\partial(\bm{z}_i)=\bm d_i\) define a
continuous piecewise affine vector field \(\bm p_h^\partial\). We fix the
remaining additive vector by requiring \(\bm p_h^\partial\) to have zero mean.
Since \(\bm p_h^\partial\) is affine on \(e_i\),
\(\partial_t\bm p_h^\partial|_{e_i}=\bm d_i/h_i=-\mu_i^0\bm n_i+\alpha_i\bm t_i\).
Thus, by \eqref{eq:distributed-alpha-bound},
\[
  \sum_{i=0}^{J-1}
  \|\partial_t\bm p_h^\partial\|_{L^2(e_i)}^2
  =\sum_{i=0}^{J-1}h_i\bigl(|\mu_i^0|^2+|\alpha_i|^2\bigr)
  \lesssim
  A_h^2.
\]
Summing the componentwise estimate gives $\|\partial_t\bm p_h^\partial\|_{L^2(\Gamma_h)}
  \lesssim
  (
    \sum_{e\in\mathcal E_h^\partial}
    h_e|\mu_e^0|^2
  )^{1/2}$.

\medskip\noindent
{\it Step 3: discrete lifting and conclusion.}
Applying Lemma~\ref{lem:discrete-trace-lifting} to each scalar component yields
\(\bm p_h^0\in [S_h^1]^2\) with
\(\bm p_h^0|_{\Gamma_h}=\bm p_h^\partial\) and
\begin{equation}
\label{eq:distributed-potential-energy}
  \|\bm p_h^0\|_{H^1(\Omega_h)}
  \lesssim
  \|\partial_t\bm p_h^\partial\|_{L^2(\Gamma_h)}
  \lesssim
  \Big(
    \sum_{e\in\mathcal E_h^\partial}
    h_e|\mu_e^0|^2
  \Big)^{1/2}.
\end{equation}

The kernel inclusion follows from \eqref{eq:scurl-kernel}.
Lemma~\ref{lem:scurl-trace} and the construction
\eqref{eq:distributed-increments} give \eqref{eq:distributed-normal-traces}.
Finally, \eqref{eq:distributed-potential-energy} implies
\eqref{eq:distributed-normal-bound}.
\end{proof}

For \(\bm N_h\in X(\mathcal T_h)\) with boundary coefficients
given by \eqref{eq:holdall-three-data}, using Lemmas~\ref{lem:odd-normal-lifting} and~\ref{lem:distributed-normal-lifting}, we now define
\(\Pi_h^{nn}:X(\mathcal T_h)\to\Sigma_h^{nn}\) by
\begin{equation}\label{eq:two-level-corrections}
  \Pi_h^{nn}\bm N_h
  :=
  \bm N_h-\sCurl\bigl(\bm p_h^1+\bm p_h^0\bigr),
\end{equation}
which is a linear projection satisfying
\(\DivDiv(\Pi_h^{nn}\bm N_h)=\DivDiv\bm N_h\).
A standard scaling argument gives
\(\sum_{e\in\mathcal E_h^\partial}h_e^2|\mu_e^1|^2
\lesssim\sum_{e\in\mathcal E_h^\partial}h_e
\|N_{h,nn}\|_{L^2(e)}^2
\lesssim\|\bm N_h\|_{L^2(\Omega_h)}^2\). Together with
\eqref{eq:odd-normal-bound} and \eqref{eq:distributed-normal-bound}, this yields
\begin{equation}\label{eq:normal-correction-bound}
  \lVert\Pi_h^{nn}\bm N_h\rVert_{H(\DivDiv,\Omega_h)}
  \lesssim
  \lVert\bm N_h\rVert_{H(\DivDiv,\Omega_h)}
  +
  \Big(
    \sum_{e\in\mathcal E_h^\partial}
    h_e|\mu_e^0|^2
  \Big)^{1/2}.
\end{equation}

\subsection{Effective-shear boundary correction}
\label{sec:effective-shear-correction}

As in Section~\ref{sec:odd-normal-correction}, for
\(\bm N_h\in X(\mathcal T_h)\) with coefficients from
\eqref{eq:holdall-three-data}, define
\(\bm p_h^q:=\frac12\sum_{e\in\mathcal E_h^\partial}
h_e\mu_e^q b_e\bm t_e\).

\begin{lemma}[effective-shear moment correction]
\label{lem:shear-correction}
The tensor correction satisfies
\(\sCurl\bm p_h^q\in\Sigma_h^{nn}\cap\ker(\DivDiv)\) and
\begin{align}
  \left.(\sCurl\bm p_h^q)_{nn}\right|_e
  &=0,
  \qquad
  \int_e q_n(\sCurl\bm p_h^q)\,\mathrm ds=\mu_e^q
  &&\forall e\in\mathcal E_h^\partial,
  \label{eq:shear-correction-traces}\\
  \|\sCurl\bm p_h^q\|_{L^2(\Omega_h)}
  &\lesssim
  \Big(\sum_{e\in\mathcal E_h^\partial}h_e^2|\mu_e^q|^2\Big)^{1/2}.
  \label{eq:shear-correction-bound}
\end{align}
\end{lemma}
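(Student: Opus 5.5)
The plan mirrors the proof of Lemma~\ref{lem:odd-normal-lifting}, with the normal bubble replaced by a tangential one.

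\textbf{Membership in the space.} The field \(\bm p_h^q\) is continuous and piecewise quadratic, since each \(b_e\) is a continuous piecewise quadratic bubble supported on \(K_e\). Hence \eqref{eq:scurl-kernel} gives \(\sCurl\bm p_h^q\in X(\mathcal T_h)\cap\ker(\DivDiv)\). The inclusion in \(\Sigma_h^{nn}\) then follows once the normal--normal traces vanish on every boundary edge.

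\textbf{Traces on a boundary edge.} Fix \(e\in\mathcal E_h^\partial\). On \(e\), only the \(e\)-th summand of \(\bm p_h^q\) is nonzero, because any other bubble \(b_{e'}\) vanishes on \(\partial K_{e'}\setminus e'\). The restriction of \(\bm p_h^q\) to \(e\) is therefore \(\tfrac12 h_e\mu_e^q b_e\bm t_e\). This gives
\[
\bm p_h^q\cdot\bm n_e=0,
\qquad
\bm p_h^q\cdot\bm t_e=\tfrac12 h_e\mu_e^q b_e
\qquad\text{on } e.
\]
Lemma~\ref{lem:scurl-trace} only involves tangential derivatives along \(e\), so it applies to these traces.

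\begin{itemize}
\item Normal--normal trace: \((\sCurl\bm p_h^q)_{nn}|_e=-\partial_t(\bm p_h^q\cdot\bm n_e)=0\).
\item Effective shear: \(q_n(\sCurl\bm p_h^q)|_e=-\tfrac12 h_e\mu_e^q\,\partial_{tt}b_e\).
\end{itemize}

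To evaluate the shear, use the identity \(-\partial_t(h_eb_e)=\ell_e\) with \(\ell_e=2s/h_e-1\). It gives \(\partial_{tt}(h_eb_e)=-2/h_e\). Hence \(q_n(\sCurl\bm p_h^q)|_e=\mu_e^q/h_e\), which is constant on \(e\), and its integral over \(e\) equals \(\mu_e^q\). This is \eqref{eq:shear-correction-traces}.

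One sign point needs care. The vertex labelling \(e=[\bm z_i,\bm z_j]\) must agree with the orientation of \(\bm t_e\). Even if it does not, \(b_e\) is symmetric, so \(\partial_{tt}b_e\) does not depend on the orientation and the constant shear value is unchanged.

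\textbf{Norm bound.} Use \(\|\nabla b_e\|_{L^\infty(K_e)}\lesssim h_e^{-1}\), \(|K_e|\simeq h_e^2\) from shape regularity, and the fact that each element carries only finitely many summands. Then
\[
\|\sCurl(h_e\mu_e^q b_e\bm t_e)\|_{L^2(K_e)}^2\lesssim h_e^2|\mu_e^q|^2\cdot h_e^{-2}\cdot h_e^2 .
\]
Summing over \(e\in\mathcal E_h^\partial\) yields \eqref{eq:shear-correction-bound}.

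\textbf{Main obstacle.} The only delicate step is the bookkeeping of the factor \(\tfrac12\) against the second derivative \(\partial_{tt}b_e=-2/h_e^2\), so that the integral equals \(\mu_e^q\) exactly rather than a multiple of it. Everything else is routine and parallels the odd-mode correction.
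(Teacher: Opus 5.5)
Your proposal is correct and follows the paper's own argument. The paper gets the kernel inclusion from \eqref{eq:scurl-kernel}, the traces from Lemma~\ref{lem:scurl-trace} using \(-\partial_{tt}b_e=2h_e^{-2}\) on \(e\), and the \(L^2\) bound from standard scaling and finite overlap; you carry out the same steps and simply spell out the bookkeeping (the factor \(\tfrac12\), and that only the trace of the \(e\)-th bubble on \(e\) enters) that the paper's two-line proof leaves implicit.
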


\begin{proof}
The inclusion and \eqref{eq:shear-correction-traces} follow from
\eqref{eq:scurl-kernel} and Lemma~\ref{lem:scurl-trace}, using
\(-\partial_{tt}b_e=2h_e^{-2}\) on \(e\). Standard scaling and finite
overlap yield \eqref{eq:shear-correction-bound}.
\end{proof}

Define \(\Pi_h^{nn,q}\bm N_h:=\bm N_h-\sCurl\bm p_h^q\).
By Lemma~\ref{lem:shear-correction}, this operator preserves the
normal--normal trace and \(\DivDiv\), and its restriction
\(\Pi_h^{nn,q}:\Sigma_h^{nn}\to\Sigma_h^{nn,q}\) is a projection.
A standard scaling argument gives
\(\sum_{e\in\mathcal E_h^\partial}h_e^2|\mu_e^q|^2
\le\sum_{e\in\mathcal E_h^\partial}h_e^3
\|q_n(\bm N_h)\|_{L^2(e)}^2\). The latter is bounded by
\(\|\bm N_h\|_{L^2(\Omega_h)}^2\). Together with
\eqref{eq:shear-correction-bound} and preservation of \(\DivDiv\), this yields
\begin{equation}\label{eq:projection-stability}
  \|\Pi_h^{nn,q}\bm N_h\|_{H(\DivDiv,\Omega_h)}
  \lesssim\|\bm N_h\|_{H(\DivDiv,\Omega_h)}.
\end{equation}

The two projections commute on \(X(\mathcal T_h)\), since each preserves
the boundary data corrected by the other. Their composition satisfies
\begin{equation}\label{eq:holdall-full-correction}
  \Pi_h^{nn,q}\Pi_h^{nn}\bm N_h\in\Sigma_h^{nn,q},
  \qquad
  \DivDiv(\Pi_h^{nn,q}\Pi_h^{nn}\bm N_h)=\DivDiv\bm N_h,
\end{equation}
with its \(H(\DivDiv,\Omega_h)\) bound following from
\eqref{eq:normal-correction-bound} and \eqref{eq:projection-stability}.
Finally, \eqref{eq:odd-normal-bound},
\eqref{eq:distributed-normal-bound}, and \eqref{eq:shear-correction-bound} give
\begin{equation}\label{eq:holdall-correction-bound}
  \|\bm N_h-\Pi_h^{nn,q}\Pi_h^{nn}\bm N_h\|_{L^2(\Omega_h)}
  \lesssim
  \Bigl[\sum_{e\in\mathcal E_h^\partial}
  \bigl(h_e|\mu_e^0|^2
       +h_e^2|\mu_e^1|^2
       +h_e^2|\mu_e^q|^2\bigr)\Bigr]^{1/2}.
\end{equation}
\section{Discrete well-posedness and error analysis}
\label{sec:stability-error}

This section first establishes the discrete well-posedness of the mixed
method. We then estimate the boundary consistency error and the
approximation error of the constrained interpolant, leading to the
bending-moment error estimate. Finally, under an \(H^4\)-regularity
assumption for the dual problem, a duality argument yields the optimal
\(L^2\)-error estimate for the displacement.
We use the fixed background domain \(\widetilde\Omega\) throughout.

\subsection{Discrete well-posedness} \label{subsec:abstract-error}

\begin{theorem}[discrete inf-sup]
\label{thm:infsup}
Suppose that Assumption~\ref{ass:polygonal-boundary} holds. Then there exists a constant
\(C>0\), independent of \(h\), such that for every
\(v_h\in V_h\), there exists \(\bm N_h^{nn,q}\in\Sigma_h^{nn,q}\) satisfying
\begin{equation}\label{eq:stable-infsup-witness}
  \begin{aligned}
    \DivDiv\bm N_h^{nn,q}&=v_h,\\
    \|\bm N_h^{nn,q}\|_{H(\DivDiv,\Omega_h)}
    &\le
    C\|v_h\|_{L^2(\Omega_h)}.
  \end{aligned}
\end{equation}
Consequently, the pair $\Sigma_h^{nn,q}\times V_h$ satisfies a
uniform discrete inf--sup condition.
\end{theorem}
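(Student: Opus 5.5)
The plan is to build a stable preimage in the unconstrained space \(X(\mathcal T_h)\) first, and then move it into \(\Sigma_h^{nn,q}\) with the composed correction \(\Pi_h^{nn,q}\Pi_h^{nn}\) from \eqref{eq:holdall-full-correction}. Since this correction preserves \(\DivDiv\), it keeps the equation \(\DivDiv\bm N_h=v_h\) intact. The only real work is to make sure that the constant normal--normal defects \(\mu_e^0\), which enter \eqref{eq:normal-correction-bound} with the weaker weight \(h_e\), are uniformly controlled.

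\emph{Step 1: continuous right inverse.} Given \(v_h\in V_h\), extend it by zero to a fixed ball \(B\supset\widetilde\Omega\). Let \(w\in H^2_0(B)\) solve the biharmonic problem \(\Delta^2 w=v_h\) in a smooth ball, or take any fixed regular right inverse of \(\DivDiv\) on \(B\). Set \(\bm N:=D^2w\). Elliptic regularity on the smooth fixed domain gives \(w\in H^{4}(B)\) only if \(v_h\in H^0\) is smoother, so in general we only have \(w\in H^{4-\epsilon}\) locally. Since \(v_h\in L^2\), what we have is \(w\in H^4\)? No: \(\Delta^2 w\in L^2\) gives \(w\in H^4(B)\) on a smooth domain. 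Hence \(\bm N\in H^2(B;\mathbb S)\), \(\DivDiv\bm N=v_h\), and \(\|\bm N\|_{2,B}\lesssim\|v_h\|_{\Omega_h}\), with the constant independent of \(h\) because \(B\) is fixed.

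\emph{Step 2: interpolation.} Apply \(\Pi_h^{\mathrm{dDiv}}\) with \(r=1\) on \(\Omega_h\). By \eqref{eq:canonical-commuting}, \(\DivDiv\Pi_h^{\mathrm{dDiv}}\bm N=P_h^1v_h=v_h\). By \eqref{eq:canonical-approximation}, \(\|\Pi_h^{\mathrm{dDiv}}\bm N\|_{\Omega_h}\lesssim\|\bm N\|_{2,B}\lesssim\|v_h\|_{\Omega_h}\).

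\emph{Step 3: correction.} Define \(\bm N_h^{nn,q}:=\Pi_h^{nn,q}\Pi_h^{nn}\Pi_h^{\mathrm{dDiv}}\bm N\). Then \(\DivDiv\bm N_h^{nn,q}=v_h\) holds, and membership in \(\Sigma_h^{nn,q}\) follows from \eqref{eq:holdall-full-correction}. Combining \eqref{eq:normal-correction-bound} and \eqref{eq:projection-stability}, it remains to bound \(\sum_e h_e|\mu_e^0|^2\). By the definition of the degrees of freedom, \(\mu_e^0\) is the edge mean of \(N_{nn}\) itself, i.e. \(\mu_e^0=|e|^{-1}\int_e \bm n_e^\top\bm N\bm n_e\,\mathrm ds\). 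Cauchy--Schwarz then gives
\[
\sum_e h_e|\mu_e^0|^2\le\|N_{nn}\|_{0,\Gamma_h}^2\lesssim\|\bm N\|_{1,\widetilde\Omega}^2,
\]
where the last bound is the uniform trace inequality \eqref{eq:uniform-trace}. This is \(\lesssim\|v_h\|^2\).

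\emph{Conclusion and main obstacle.} The resulting \(\bm N_h^{nn,q}\) satisfies \eqref{eq:stable-infsup-witness}. Together with \(\DivDiv\Sigma_h^{nn,q}\subset V_h\), this yields the inf--sup condition by the standard Fortin-type argument: take \(\bm N_h=\bm N_h^{nn,q}\) in \(\sup_{\bm N_h}(\DivDiv\bm N_h,v_h)/\|\bm N_h\|\). I expect the main obstacle to be exactly the \(\mu^0\) term, since a naive inverse estimate on the discrete field would cost a factor \(h^{-1/2}\). The remedy is to evaluate \(\mu^0\) through the smooth field \(\bm N\) and the \(h\)-uniform trace bound, rather than through the discrete field, which is why Step 1 needs a right inverse that is \(H^1\)-regular on a fixed background domain. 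Uniform Lipschitz regularity of \(\Omega_h\) is not even needed beyond \eqref{eq:uniform-trace}.
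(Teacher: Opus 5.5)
Your proof is correct and follows the paper's argument: a right inverse of \(\DivDiv\) on a fixed smooth domain, then the commuting interpolant \(\Pi_h^{\mathrm{dDiv}}\), then the \(\DivDiv\)-preserving correction \(\Pi_h^{nn,q}\Pi_h^{nn}\), with \(\sum_e h_e|\mu_e^0|^2\) controlled through the trace of the smooth field rather than the discrete one. The only difference is the right inverse: the paper uses the lighter \(-z\bm I\) with \(-\Delta z=\widetilde v_h\) on \(\widetilde\Omega\), which needs only \(H^2\) Poisson regularity, gives \(N_{nn}=-z\) exactly, and bounds \(\mu^0\) via \(H^2\hookrightarrow L^\infty\); your \(D^2w\) from the clamped biharmonic problem instead needs \(H^4\)-regularity, which does hold on a ball, so you should drop the hesitant remark about \(H^{4-\epsilon}\) in your Step~1.
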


\begin{proof}
For \(v_h\in V_h\), let \(\widetilde v_h\) denote its zero extension to
\(\widetilde\Omega\), and let \(z\in H_0^1(\widetilde\Omega)\) solve
\[
  -\Delta z=\widetilde v_h
  \quad\text{in }\widetilde\Omega,
  \qquad
  z=0
  \quad\text{on }\partial\widetilde\Omega.
\]
The fixed smooth domain \(H^2\)-regularity estimate for the above problem gives
\begin{equation}\label{eq:holdall-poisson-bound}
  \|z\|_{H^2(\widetilde\Omega)}
  \lesssim
  \|\widetilde v_h\|_{L^2(\widetilde\Omega)}
  =
  \|v_h\|_{L^2(\Omega_h)}.
\end{equation}
Restricting \(-z\bm I\) to \(\Omega_h\), define
\begin{equation}\label{eq:infsup-Nh-construction}
  \bm N_h:=\Pi_h^{\mathrm{dDiv}}\bigl((-z\bm I)|_{\Omega_h}\bigr),
  \qquad
  \bm N_h^{nn,q}:=\Pi_h^{nn,q}\Pi_h^{nn}\bm N_h.
\end{equation}
By \eqref{eq:canonical-commuting} and \eqref{eq:holdall-full-correction},
the corrected tensor in \eqref{eq:infsup-Nh-construction} belongs to
\(\Sigma_h^{nn,q}\) and satisfies
\(\DivDiv\bm N_h^{nn,q}=\DivDiv\bm N_h=v_h\).
Moreover, \eqref{eq:canonical-approximation},
\eqref{eq:canonical-commuting}, and \eqref{eq:holdall-poisson-bound}
yield the estimate $\|\bm N_h\|_{H(\DivDiv,\Omega_h)}
  \lesssim\|z\|_{H^2(\widetilde\Omega)}
  \lesssim\|v_h\|_{L^2(\Omega_h)}$.

For the constant coefficients \(\mu_e^0\) of \(\bm N_h\) in
\eqref{eq:holdall-three-data}, preservation of the normal--normal
moments gives \(\mu_e^0=-h_e^{-1}\int_e z\,\mathrm ds\).
Thus Cauchy--Schwarz, \(|\Gamma_h|\lesssim1\), and the embedding
\(H^2(\widetilde\Omega)\hookrightarrow L^\infty(\widetilde\Omega)\) imply
\begin{equation}\label{eq:holdall-uniform-trace}
  \sum_{e\in\mathcal E_h^\partial}h_e|\mu_e^0|^2
  \leq\|z\|_{L^2(\Gamma_h)}^2
  \leq|\Gamma_h|\|z\|_{L^\infty(\widetilde\Omega)}^2
  \lesssim\|z\|_{H^2(\widetilde\Omega)}^2.
\end{equation}
Combining \eqref{eq:projection-stability},
\eqref{eq:normal-correction-bound}, and
\eqref{eq:holdall-uniform-trace} with the preceding estimates gives
\[
\begin{aligned}
  \|\bm N_h^{nn,q}\|_{H(\DivDiv,\Omega_h)}
  &\lesssim
  \|\Pi_h^{nn}\bm N_h\|_{H(\DivDiv,\Omega_h)}\\
  &\lesssim
  \|\bm N_h\|_{H(\DivDiv,\Omega_h)}
  +\Bigl(\sum_{e\in\mathcal E_h^\partial}h_e|\mu_e^0|^2\Bigr)^{1/2}
  \lesssim\|v_h\|_{L^2(\Omega_h)}.
\end{aligned}
\]
This proves \eqref{eq:stable-infsup-witness}.
\end{proof}

On the discrete kernel, the graph norm reduces to the \(L^2\) norm.
By Brezzi's theory \cite{BoffiBrezziFortin2013},
the uniform coercivity of \(\mathbb A_\sigma\) and
Theorem~\ref{thm:infsup} imply that \eqref{eq:discrete-method} has
a unique solution for every \(f_h\in V_h\), satisfying $h$-uniform stability
\(\|\bm M_h\|_{H(\DivDiv,\Omega_h)}+\|u_h\|_{0,\Omega_h}
\lesssim\|f_h\|_{0,\Omega_h}\).

\subsection{Constrained interpolation}
\label{subsec:geometric-estimates}

For \(\bm N\in H^2(\widetilde\Omega;\mathbb S)\), define
\begin{equation}
 \label{eq:corrected-interpolant}
 \Pi_h\bm N
 :=\Pi_h^{nn,q}\Pi_h^{nn}
       \Pi_h^{\mathrm{dDiv}}(\bm N|_{\Omega_h}).
\end{equation}
For \(k=0,1\), let \(P_e^k\) be the \(L^2(e)\)-projection onto \(\mathbb{P}_k(e)\).

\begin{proposition}[constrained interpolation]
\label{prop:constrained-commuting-approximation}
Suppose Assumption~\ref{ass:polygonal-boundary} holds, each
\(\Gamma^{(i)}\) is piecewise \(C^{1,1}\), and
\(\mathcal V_\Gamma\subset\mathcal V_h^\partial\).
If \(\bm N\in H^2(\widetilde\Omega;\mathbb S)\) satisfies
\(N_{nn}=0\) on every \(\zeta\in\mathcal C_\Gamma\), then the operator $\Pi_h$ in
\eqref{eq:corrected-interpolant} satisfies \(\Pi_h\bm N\in\Sigma_h^{nn,q}\) and
\begin{subequations}
\begin{align}
 \DivDiv \Pi_h\bm N&=P_h^1\DivDiv\bm N,
 \label{eq:dual-interpolant-commuting}\\
 \|\bm N-\Pi_h\bm N\|_{0,\Omega_h}
 &\lesssim h^{3/2}\|\bm N\|_{H^2(\widetilde\Omega)},
 \label{eq:dual-interpolant-approximation}\\
 q_n(\Pi_h\bm N)|_e&=(I-P_e^0)P_e^1q_n(\bm N)
 \quad\forall e\in\mathcal E_h^\partial,
 \label{eq:corrected-shear-projection}\\
 \|q_n(\Pi_h\bm N)\|_{0,\Gamma_h}
 &\lesssim\|\bm N\|_{H^2(\widetilde\Omega)}.
 \label{eq:dual-interpolant-shear}
\end{align}
\end{subequations}
\end{proposition}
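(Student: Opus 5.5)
The plan is to treat the four claims in order, using the structure of the correction chain \eqref{eq:correction-chain}.

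\emph{Membership and commuting property.} Set \(\bm N_h:=\Pi_h^{\mathrm{dDiv}}(\bm N|_{\Omega_h})\). Membership \(\Pi_h\bm N\in\Sigma_h^{nn,q}\) and \eqref{eq:dual-interpolant-commuting} follow at once from \eqref{eq:holdall-full-correction} together with \eqref{eq:canonical-commuting}, since both corrections lie in \(\ker(\DivDiv)\).

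\emph{Shear identity and shear bound.} For \eqref{eq:corrected-shear-projection}, I use that the canonical interpolant matches the two shear moments \(d^q_{e,k}\). Hence \(q_n(\bm N_h)|_e=P_e^1q_n(\bm N)\), because \((\ell_{0,e},\ell_{1,e})\) is an orthogonal basis of \(\mathbb P_1(e)\). The odd and constant normal--normal corrections have vanishing effective shear on boundary edges, by \eqref{eq:odd-normal-traces} and \eqref{eq:distributed-normal-traces}. The shear correction subtracts \(q_n(\sCurl\bm p_h^q)|_e\), which equals the constant \(\mu_e^q/h_e\), since \(-\partial_{tt}b_e\) is constant on \(e\); here \(\mu_e^q=\int_eq_n(\bm N_h)\,\mathrm ds\). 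This gives exactly \((I-P_e^0)P_e^1q_n(\bm N)\). The bound \eqref{eq:dual-interpolant-shear} then follows from \(L^2\)-stability of the projections and the uniform trace bound \eqref{eq:uniform-trace}, applied to the components of \(\nabla\bm N\in H^1(\widetilde\Omega)\). Indeed, \(q_n(\bm N)\) involves only first derivatives of \(\bm N\) on a straight edge.

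\emph{Approximation estimate.} By \eqref{eq:canonical-approximation} with \(r=1\), the term \(\bm N-\bm N_h\) is \(\mathcal O(h^2)\). By \eqref{eq:holdall-correction-bound}, it remains to bound the three families of boundary moments of \(\bm N_h\), which equal those of \(\bm N\) by the definition of the degrees of freedom.

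The shear term is easy. Inverse-free scaling gives \(h_e^2|\mu_e^q|^2\le h_e^3\|q_n(\bm N)\|_{0,e}^2\), and the sum is at most \(h^3\|q_n(\bm N)\|_{0,\Gamma_h}^2\lesssim h^3\|\bm N\|_{H^2(\widetilde\Omega)}^2\).

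The normal--normal terms require the geometry. The main obstacle is \(\sum_e h_e|\mu_e^0|^2\le\|N_{nn}\|_{0,\Gamma_h}^2\), where \(N_{nn}\) is taken with respect to \(\bm n_e\), and this must be shown to be \(\lesssim h^3\|\bm N\|_{H^2}^2\). I would split
\[
\bm n_e^\top\bm N\bm n_e
=\bigl(\bm n_e^\top\bm N\bm n_e-(\bm n_e^\top\bm N\bm n_e)\circ\Phi_e\bigr)
+\bigl((\bm n_e^\top\bm N\bm n_e)\circ\Phi_e-(\bm n^\top\bm N\bm n)\circ\Phi_e\bigr),
\]
using \(\bm n^\top\bm N\bm n=0\) on \(\Gamma_e\). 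The first difference is controlled by \eqref{eq:boundary-trace-transfer} as \(h_e^2\|\nabla\bm N\|_{0,\mathcal S_e}^2\). Summing over edges and using \eqref{eq:boundary-strip-estimate} together with the trace bound for \(\nabla\bm N\), this gives \(h^4\|\bm N\|_{H^2}^2\). The second difference is bounded pointwise by \(|\bm n\circ\Phi_e-\bm n_e||\bm N\circ\Phi_e|\lesssim h_e|\bm N\circ\Phi_e|\). By \eqref{eq:boundary-pullback-estimates}, its square summed over edges is \(\lesssim h^2\|\bm N\|_{0,\Gamma}^2\), which is only \(\mathcal O(h^2)\) and therefore insufficient.

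To recover the missing power, I would exploit that \(\bm n\circ\Phi_e-\bm n_e=-\rho_e'\bm t_e+\mathcal O(h_e^2)\) by \eqref{eq:boundary-frame-formulas}. The leading part of the second difference is then \(-2\rho_e'\,(\bm n_e^\top\bm N\bm t_e)\circ\Phi_e\). Since \(\rho_e|_{\partial e}=0\), the mean \(h_e^{-1}\int_e\) of this term equals
\[
-2h_e^{-1}\int_e\rho_e'\bigl(N_{nt}\circ\Phi_e-\overline{N_{nt}}\bigr)\,\mathrm ds,
\]
where \(\overline{N_{nt}}\) is the edge mean. This is bounded by \(h_e\cdot h_e^{1/2}\|\partial_t(N_{nt}\circ\Phi_e)\|_{0,e}\). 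The key point is that \(\mu_e^0\) is a mean, so only the averaged defect matters. It follows that \(h_e|\mu_e^0|^2\lesssim h_e^3\|\nabla\bm N\|_{0,\Gamma_e}^2+h_e^4\|\bm N\|_{0,\Gamma_e}^2+(\text{strip terms})\), which sums to \(h^3\|\bm N\|_{H^2}^2\).

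The odd mode is handled similarly: \(h_e^2|\mu_e^1|^2\lesssim h_e\|N_{nn}\|_{0,e}^2\), and the crude \(\mathcal O(h_e)\) pointwise defect already gives \(h\cdot h^2=h^3\). At corners, the corner resolution \eqref{eq:corner-resolution} keeps each \(\Gamma_e\) inside one \(C^{1,1}\) piece, so all frame estimates hold with one-sided values.

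Combining the three boundary bounds with \eqref{eq:holdall-correction-bound} yields \eqref{eq:dual-interpolant-approximation}.
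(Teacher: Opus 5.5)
Your proposal is correct and follows essentially the same route as the paper. The commuting property, the shear identity and bound, and the crude $\mathcal O(h^3)$ bounds for the odd and shear modes coincide with the paper's. Your key step for the constant mode is also the paper's: since $\rho_e|_{\partial e}=0$, the edge mean of the leading defect $\rho_e'(N_{n_et_e}\circ\Phi_e)$ gains an extra power of $h_e$. The only differences are in execution: the paper writes $N_{n_en_e}|_e=2\rho_e'(N_{n_et_e}\circ\Phi_e)-|\rho_e'|^2(N_{t_et_e}\circ\Phi_e)+\delta_e$ exactly, with no linearization remainder, and integrates by parts onto $\rho_e$ rather than subtracting the edge mean. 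Two minor slips: the leading term carries $+2\rho_e'$, not $-2\rho_e'$, and that term actually contributes $h_e^4$ rather than $h_e^3$, which is harmless.
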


\begin{proof}
First, \eqref{eq:dual-interpolant-commuting} follows from
\eqref{eq:holdall-full-correction} and \eqref{eq:canonical-commuting}.
Let \(\mu_e^0,\mu_e^1,\mu_e^q\), \(e\in\mathcal E_h^\partial\), be the
boundary coefficients of \(\Pi_h^{\mathrm{dDiv}}\bm N\) in
\eqref{eq:holdall-three-data}.

{\it Step 1: normal--normal mean.}
We consider \(N_{n_en_e}\), \(N_{n_et_e}\), and \(N_{t_et_e}\)
in the fixed frame \((\bm n_e,\bm t_e)\), including on \(\Gamma_e\).
By \eqref{eq:boundary-frame-formulas}, the condition \(N_{nn}=0\) on
\(\Gamma_e\) gives
\[
\begin{aligned}
 N_{n_en_e}|_e
   &=2\rho_e'(N_{n_et_e}\circ\Phi_e)
       -|\rho_e'|^2(N_{t_et_e}\circ\Phi_e)+\delta_e,\\
 \delta_e&:=N_{n_en_e}|_e-N_{n_en_e}\circ\Phi_e.
\end{aligned}
\]
By \eqref{eq:boundary-trace-transfer},
\(\|\delta_e\|_{0,e}^2\lesssim
h_e^2\|\nabla\bm N\|_{0,\mathcal S_e}^2\).
Since \(\mu_e^0=h_e^{-1}\int_e N_{n_en_e}\) and
\(\rho_e|_{\partial e}=0\), integration by parts gives
\[
 \int_e\rho_e'(N_{n_et_e}\circ\Phi_e)\,ds
 =-\int_e\rho_e\,\partial_{t_e}(N_{n_et_e}\circ\Phi_e)\,ds.
\]
The bounds on \(\rho_e\) and \(\rho_e'\), together with
\eqref{eq:boundary-pullback-estimates}, imply
\begin{equation*}
 |\mu_e^0|^2
 \lesssim h_e^3\|\bm N\|_{H^1(\Gamma_e)}^2
             +h_e\|\nabla\bm N\|_{0,\mathcal S_e}^2.
\end{equation*}
Applying \eqref{eq:boundary-strip-estimate} to \(\nabla\bm N\) in
the above estimate, and using trace bound
\eqref{eq:uniform-trace}, gives
\begin{equation}  \label{eq:normal-mean-geometric-bound}
 \sum_{e\in\mathcal E_h^\partial}h_e|\mu_e^0|^2
 \lesssim h^4 \|\bm N\|_{H^1(\Gamma_h)}^2
 + h^4 \|\nabla \bm N\|_{0,\Gamma_h}^2
 + h^6 \|\bm N\|_{H^2(\mathcal U)}^2
 \lesssim h^4 \|\bm N\|_{H^2(\widetilde\Omega)}^2.
\end{equation}

\medskip\noindent
{\it Step 2: odd and shear modes.}
Preservation of the normal--normal moments gives
\(\mu_e^1=(N_{n_en_e},\ell_e)_e/\|\ell_e\|_{0,e}^2\), where
\(\|\ell_e\|_{0,e}^2=\int_0^{h_e}(2s/h_e-1)^2\,ds=h_e/3\).
By Cauchy--Schwarz, the decomposition in Step 1, and the bounds
\(\|\rho_e'\|_{L^\infty(e)}\lesssim h_e\),
\eqref{eq:boundary-pullback-estimates}, \eqref{eq:uniform-trace}, and
\eqref{eq:boundary-trace-transfer}, we obtain
\[
\begin{aligned}
 \sum_{e\in\mathcal E_h^\partial} h_e^2|\mu_e^1|^2
 &\le 3\sum_{e\in\mathcal E_h^\partial} h_e\|N_{n_en_e}\|_{0,e}^2\\
 &\lesssim\sum_{e\in\mathcal E_h^\partial} h_e^3
   \bigl(\|\bm N\|_{0,\Gamma_e}^2
          +\|\nabla\bm N\|_{0,\mathcal S_e}^2\bigr)\\
 &\lesssim h^3\bigl(\|\bm N\|_{0,\Gamma}^2
          +\|\nabla\bm N\|_{0,\mathcal U}^2\bigr)
 \lesssim h^3\|\bm N\|_{H^2(\widetilde\Omega)}^2.
\end{aligned}
\]

Also \(\mu_e^q=\int_e q_n(\bm N)\). Since \(q_n(\bm N)\) on a
straight edge is a linear combination of first derivatives,
Cauchy--Schwarz and \eqref{eq:uniform-trace} give
\[
 \sum_{e\in\mathcal E_h^\partial} h_e^2|\mu_e^q|^2
 \le\sum_{e\in\mathcal E_h^\partial} h_e^3\|q_n(\bm N)\|_{0,e}^2
 \lesssim h^3\|\bm N\|_{H^2(\widetilde\Omega)}^2.
\]
Together with \eqref{eq:normal-mean-geometric-bound}, these estimates yield
\begin{equation}
 \sum_{e\in\mathcal E_h^\partial}
 \bigl(h_e|\mu_e^0|^2
       +h_e^2|\mu_e^1|^2
       +h_e^2|\mu_e^q|^2\bigr)
 \lesssim h^3\|\bm N\|_{H^2(\widetilde\Omega)}^2.
 \label{eq:compatible-trace-defect}
\end{equation}
The correction bound \eqref{eq:holdall-correction-bound},
\eqref{eq:compatible-trace-defect}, and
\eqref{eq:canonical-approximation} prove
\eqref{eq:dual-interpolant-approximation}.

\medskip\noindent
{\it Step 3: shear trace.}
The canonical interpolant has shear trace \(P_e^1q_n(\bm N)\).
The normal--normal corrections in Section~\ref{sec:normal-normal-correction}
leave this trace unchanged, and the effective-shear correction subtracts
its mean on every boundary edge. This proves
\eqref{eq:corrected-shear-projection}.
Projection stability and \eqref{eq:uniform-trace} then give
\eqref{eq:dual-interpolant-shear}.
\end{proof}

\subsection{Consistency and moment error estimate}
\label{subsec:moment-error}

Throughout this subsection, we assume that each \(\Gamma^{(i)}\) is
piecewise \(C^{1,1}\), the meshes satisfy
Assumption~\ref{ass:polygonal-boundary}, and
\(\mathcal V_\Gamma\subset\mathcal V_h^\partial\).
Let \(u\in H^4(\Omega)\) be the exact displacement and choose a bounded
extension \(\widetilde u\in H^4(\widetilde\Omega)\), with
\(\|\widetilde u\|_{H^4(\widetilde\Omega)}
\lesssim\|u\|_{H^4(\Omega)}\)
\cite[Theorem~1.4.3.1]{Grisvard1985}. Set
\begin{equation}
 \widetilde{\bm M}:=\mathbb C_\sigma D^2\widetilde u,
 \qquad
 r_{\mathrm{ext}}:=\DivDiv\widetilde{\bm M}-\widetilde f.
 \label{eq:compatible-moment-extension}
\end{equation}
Then \(\widetilde{\bm M}|_\Omega=\bm M\),
\(\widetilde{\bm M}\in H^2(\widetilde\Omega;\mathbb S)\), and
\(r_{\mathrm{ext}}=0\) in \(\Omega\).
Recall that the load extension \(\widetilde f\) is prescribed in
Section~\ref{subsec:boundary-spaces-method}.

For a pair \((\bm N,v)\in H(\DivDiv,\Omega_h;\mathbb S)\times L^2(\Omega_h)\),
define the two residuals by
\begin{align}
 R_h^{\mathrm{bd}}(\bm N,v;\bm Q_h)
 &:=(\mathbb A_\sigma\bm N,\bm Q_h)_{\Omega_h}
       -(v,\DivDiv\bm Q_h)_{\Omega_h},
 \label{eq:boundary-residual}\\
 R_h^{\mathrm{vol}}(\bm N;w_h)
 &:=(\DivDiv\bm N-f_h,w_h)_{\Omega_h}.
 \label{eq:volume-residual}
\end{align}
For the extended exact solution \((\widetilde{\bm M},\widetilde u)\),
we abbreviate the discrete dual norms as
\begin{equation}
 \label{eq:residual-dual-norms}
 \|R_h^{\mathrm{bd}}\|_{(\Sigma_h^{nn,q})'}
 :=\sup_{\bm Q_h\ne0}
   \frac{|R_h^{\mathrm{bd}}(\widetilde{\bm M},\widetilde u;\bm Q_h)|}
        {\|\bm Q_h\|_{H(\DivDiv,\Omega_h)}},
 \quad
 \|R_h^{\mathrm{vol}}\|_{V_h'}
 :=\sup_{w_h\ne0}
   \frac{|R_h^{\mathrm{vol}}(\widetilde{\bm M};w_h)|}{\|w_h\|_{0,\Omega_h}},
\end{equation}
where the suprema run over \(\Sigma_h^{nn,q}\) and \(V_h\), respectively.

\paragraph{Boundary traces}
On every boundary edge, \(q_n(\bm Q_h)\) is linear with zero mean for
\(\bm Q_h\in\Sigma_h^{nn,q}\), so it pairs only with the odd part
of the displacement trace. We define \(w_{\mathrm o}\) edgewise on
\(\Gamma_h\) by
\(w_{\mathrm o}(\bm x):=(w(\bm x)-w(\bm z_0+\bm z_1-\bm x))/2\)
for \(\bm x\in e=[\bm z_0,\bm z_1]\).

\begin{lemma}[odd boundary trace]
\label{lem:H4-odd-chord}
If \(w\in H^4(\widetilde\Omega)\) vanishes on \(\Gamma\), then
\begin{equation}
 \|w_{\mathrm o}\|_{0,\Gamma_h}
 \lesssim h^3\|w\|_{H^4(\widetilde\Omega)}.
 \label{eq:H4-odd-chord}
\end{equation}
\end{lemma}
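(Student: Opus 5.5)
The estimate is edgewise, and the decay comes from two sources: the odd part of a smooth function on an edge of length \(h_e\) is controlled by its odd Taylor coefficients, and the function \(w\) itself is small on \(e\) because it vanishes on the nearby arc \(\Gamma_e\). Fix \(e=[\bm z_0,\bm z_1]\in\mathcal E_h^\partial\). Parametrize \(e\) by the signed distance \(\tau\in(-h_e/2,h_e/2)\) from its midpoint \(\bm m_e\), and set \(g(\tau):=w(\bm m_e+\tau\bm t_e)\). Then \(w_{\mathrm o}=(g(\tau)-g(-\tau))/2\). Since \(H^4(\widetilde\Omega)\hookrightarrow C^{2,\alpha}\) and traces of \(H^4\) on lines lie in \(H^{7/2}\), \(g\) is at least \(C^3\)-regular in the \(L^2\) sense. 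Taylor expansion about \(\tau=0\) then gives
\[
w_{\mathrm o}(\tau)=g'(0)\tau+\tfrac16 g'''(0)\tau^3+\text{remainder},
\]
with remainder controlled by \(\tau^3\) times an oscillation of \(\partial_{t_e}^3w\).

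The key step is to show that the leading odd coefficient \(g'(0)=\partial_{t_e}w(\bm m_e)\) is \(\mathcal O(h_e^2)\). I would obtain this from the vanishing of \(w\) on \(\Gamma\) rather than from pointwise Taylor coefficients. Work with the normal graph \(\Phi_e\): \(w\circ\Phi_e=0\) on \(e\). Differentiating along \(e\) gives
\[
\partial_{t_e}w+\rho_e'\,\partial_{n_e}w=0\quad\text{at }\Phi_e(\bm x).
\]
Transferring this back to \(e\) costs \(\|\rho_e\|_\infty\,\|D^2w\|_\infty\lesssim h_e^2\), in the spirit of Lemma~\ref{lem:boundary-strip-estimates} but in \(L^\infty\) via \(W^{2,\infty}\subset H^4\). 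Hence on \(e\),
\[
\partial_{t_e}w=-\rho_e'\,\partial_{n_e}w+\mathcal O(h_e^2).
\]
Since \(|\rho_e'|\lesssim h_e\), this is only \(\mathcal O(h_e)\). The extra order must come from the odd structure. Because \(\rho_e\) vanishes at both endpoints and is essentially \(\tfrac12\kappa(\tau^2-h_e^2/4)\), \(\rho_e'\) is essentially odd, so \(\rho_e'(\bm m_e)=\mathcal O(h_e^2)\) up to the \(C^{1,1}\) modulus.

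This is where I expect the main obstacle: with only \(C^{1,1}\) boundary pieces, \(\rho_e''\) is merely bounded, so the parity of \(\rho_e'\) is not guaranteed. To avoid pointwise parity arguments, I would work instead with the odd part integrally. Write
\[
w_{\mathrm o}|_e=\bigl(w|_e-w\circ\Phi_e\bigr)_{\mathrm o}.
\]
Then expand
\[
w|_e-w\circ\Phi_e=-\rho_e\,\partial_{n_e}w|_e-\tfrac12\rho_e^2\,\partial_{n_en_e}w|_e-\cdots.
\]
The first term is the dangerous one, of size \(h_e^2\).

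The remaining task is to estimate the odd part of \(\rho_e\,\partial_{n_e}w|_e\).
\begin{itemize}
\item Split \(\partial_{n_e}w|_e=\partial_{n_e}w(\bm m_e)+\mathcal O(h_e)\,\|D^2w\|_\infty\).
\item The second piece contributes \(h_e^2\cdot h_e=h_e^3\).
\item The first piece contributes \(|\partial_{n_e}w(\bm m_e)|\,(\rho_e)_{\mathrm o}\).
\end{itemize}
So I need \((\rho_e)_{\mathrm o}=\mathcal O(h_e^3)\), i.e.\ that the chordal height function is even to leading order. If the arc is \(C^{2,1}\) on the edge, this follows from Taylor expansion of the graph about the midpoint. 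For merely \(C^{1,1}\) pieces I would check whether the theorem's setting allows the curvature regularity needed, or else bound it through \(\|\rho_e'-\text{even}\|\) using curvature continuity. The quadratic term \(\rho_e^2\) is already \(\mathcal O(h_e^4)\).

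Summing over edges, \(\|w_{\mathrm o}\|_{0,e}^2\lesssim h_e^7\|w\|_{W^{2,\infty}}^2\), which sums to \(h^6\|w\|_{H^4(\widetilde\Omega)}^2\) using \(|\Gamma_h|\lesssim1\). This gives \eqref{eq:H4-odd-chord}.
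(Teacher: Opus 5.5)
Your argument has a genuine gap exactly where you flag it. After reducing to the odd part of \(\rho_e\,\partial_{n_e}w|_e\), you need \((\rho_e)_{\mathrm o}=\mathcal O(h_e^3)\). This is false in the stated setting. The pieces of \(\Gamma\) are only \(C^{1,1}\), so the curvature may jump inside an edge. An example is a point where a straight segment meets a circular arc tangentially; it need not be a mesh vertex, since it is not a geometric corner. In that case \((\rho_e)_{\mathrm o}\) is of size \(h_e^2\). The product \(\partial_{n_e}w(\bm m_e)\,(\rho_e)_{\mathrm o}\) is still \(\mathcal O(h_e^3)\) under the hypotheses. Proving that through the geometry, however, would require showing that the zero set of \(w\in H^4\) is smoother than \(C^{1,1}\) wherever \(\nabla w\neq0\), with control as \(\nabla w\to0\). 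Your proposal does not do this, so the proof does not close.

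The missing idea is that no boundary geometry is needed. Since the boundary vertices lie on \(\Gamma\), \(w(\bm z_0)=w(\bm z_1)=0\), so \(w_{\mathrm o}\) vanishes at both endpoints of \(e\). It also vanishes at the midpoint by construction. Its quadratic Lagrange interpolant at these three nodes is therefore zero, and the one-dimensional interpolation (Bramble--Hilbert) estimate gives
\[
\|w_{\mathrm o}\|_{0,e}\lesssim h_e^3|w_{\mathrm o}|_{H^3(e)}\le h_e^3\|\partial_{t_e}^3w\|_{0,e},
\]
since the third derivative of \(w_{\mathrm o}\) is the even part of \(\partial_{t_e}^3w|_e\). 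Summing over boundary edges and applying \eqref{eq:uniform-trace} to \(D^3w\in H^1(\widetilde\Omega)\) gives \eqref{eq:H4-odd-chord}. This is the paper's proof, and it uses neither the normal graph \(\Phi_e\) nor the \(C^{1,1}\) assumption. In your Taylor notation, the condition \(w_{\mathrm o}(h_e/2)=0\) alone forces \(g'(0)=-\frac{1}{24}g'''(0)h_e^2+\dots=\mathcal O(h_e^2)\). That is exactly the bound you were trying to extract from \(\rho_e\).
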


\begin{proof}
Since \(w_{\mathrm o}\) vanishes at the endpoints and midpoint of \(e\),
its quadratic interpolant at these nodes is zero.
The Bramble--Hilbert lemma and scaling argument therefore give $\|w_{\mathrm o}\|_{0,e}
 \lesssim h_e^3|w_{\mathrm o}|_{H^3(e)}
 \lesssim h_e^3\|\partial_{t_e}^3w\|_{0,e}$.
Summing over the boundary edges and applying \eqref{eq:uniform-trace} to \(D^3w\)
proves the assertion.
\end{proof}

Now, the elementwise Green identity gives, for \(\bm Q_h\in\Sigma_h^{nn,q}\),
\begin{equation}
\begin{aligned}
 R_h^{\mathrm{bd}}(\widetilde{\bm M},\widetilde u;\bm Q_h)
 &=(D^2\widetilde u,\bm Q_h)_{\Omega_h}
       -(\widetilde u,\DivDiv\bm Q_h)_{\Omega_h}\\
 &=-\sum_{e\in\mathcal E_h^\partial}\int_e
       \widetilde u_{\mathrm o}q_n(\bm Q_h)\,ds.
\end{aligned}
 \label{eq:green-residual-odd-form}
\end{equation}
Here the interior traces cancel by conformity,
\(Q_{h,nn}=0\), and the corner-force terms vanish because the
boundary vertices lie on \(\Gamma\). If an edge coincides with an exact
straight part of \(\Gamma\), then \(\widetilde u|_e=0\), and its
contribution vanishes.

\paragraph{Consistency estimates}
Set \(\mathcal S_h^+:=\Omega_h\setminus\Omega\).
Since \(r_{\mathrm{ext}}=0\) in \(\Omega\), the load projection
in \eqref{eq:discrete-load} gives
\(R_h^{\mathrm{vol}}(\widetilde{\bm M};w_h)
=(r_{\mathrm{ext}},w_h)_{\mathcal S_h^+}\) for \(w_h\in V_h\).
\begin{proposition}[boundary and load consistency]
\label{thm:boundary-consistency}
For every \(\bm Q_h\in\Sigma_h^{nn,q}\),
\begin{equation}
 |R_h^{\mathrm{bd}}(\widetilde{\bm M},\widetilde u;\bm Q_h)|
 \lesssim h^{3/2}\|\widetilde u\|_{H^4(\widetilde\Omega)}
                    \|\bm Q_h\|_{0,\Omega_h}.
 \label{eq:boundary-consistency}
\end{equation}
If \(\Omega_h\subset\Omega\), then \(R_h^{\mathrm{vol}}=0\).
Otherwise, under the additional assumption
\(r_{\mathrm{ext}}\in L^\infty(\mathcal U)\), 
\begin{equation}
 \|R_h^{\mathrm{vol}}\|_{V_h'}
 \lesssim h^{3/2}\|r_{\mathrm{ext}}\|_{L^\infty(\mathcal U)}.
 \label{eq:volume-consistency}
\end{equation}
\end{proposition}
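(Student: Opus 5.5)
For the boundary estimate \eqref{eq:boundary-consistency} we start from the odd form \eqref{eq:green-residual-odd-form}, which reduces the residual to $-\sum_{e}\int_e \widetilde u_{\mathrm o}\,q_n(\bm Q_h)\,ds$. The mean-zero constraint in $\Sigma_h^{nn,q}$ has already removed the even pairing, so we apply Cauchy--Schwarz on $\Gamma_h$:
\[
|R_h^{\mathrm{bd}}|\le \|\widetilde u_{\mathrm o}\|_{0,\Gamma_h}\,\|q_n(\bm Q_h)\|_{0,\Gamma_h}.
\]
Lemma~\ref{lem:H4-odd-chord} applies to $w=\widetilde u$, since $\widetilde u=u=0$ on $\Gamma$. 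It gives $\|\widetilde u_{\mathrm o}\|_{0,\Gamma_h}\lesssim h^3\|\widetilde u\|_{H^4(\widetilde\Omega)}$.

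It remains to bound the discrete shear trace by an inverse estimate. On each boundary element $K_e$, $\bm Q_h|_{K_e}\in X(K_e)\subset\mathbb P_3(K_e;\mathbb S)$. The trace $q_n(\bm Q_h)|_e$ involves first derivatives of $\bm Q_h$ restricted to $e$. A scaling argument on the reference element (discrete trace inequality together with an inverse inequality for polynomials) then gives
\[
\|q_n(\bm Q_h)\|_{0,e}\lesssim h_e^{-1/2}h_e^{-1}\|\bm Q_h\|_{0,K_e}=h_e^{-3/2}\|\bm Q_h\|_{0,K_e}.
\]
Shape regularity keeps the constants uniform. Summing over boundary edges, using finite overlap of the $K_e$ and quasi-uniformity, yields $\|q_n(\bm Q_h)\|_{0,\Gamma_h}\lesssim h^{-3/2}\|\bm Q_h\|_{0,\Omega_h}$. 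Multiplying by $h^3$ gives \eqref{eq:boundary-consistency}, with the $L^2$ norm of $\bm Q_h$ (not the graph norm) on the right. The only point requiring care is that the inverse estimate uses just the polynomial structure of $X(K)$, so it holds irrespective of the constraints.

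For the volume residual, if $\Omega_h\subset\Omega$ then $\mathcal S_h^+=\emptyset$. The identity $R_h^{\mathrm{vol}}=(r_{\mathrm{ext}},w_h)_{\mathcal S_h^+}$ then gives zero. Otherwise we write
\[
|(r_{\mathrm{ext}},w_h)_{\mathcal S_h^+}|\le \|r_{\mathrm{ext}}\|_{L^\infty(\mathcal U)}\,\|w_h\|_{L^1(\mathcal S_h^+)},
\]
using $\mathcal S_h^+\subset\bigcup_e \mathcal S_e\subset\mathcal U$. Justifying this inclusion is the main geometric step, and it is where we must be careful. The normal-graph representation and Assumption~\ref{ass:polygonal-boundary} imply that the region between $\Gamma_h$ and $\Gamma$ is covered by the strips $\mathcal S_e$, up to small neighborhoods of the vertices. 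Because $\rho_e$ vanishes at the endpoints and the vertices lie on $\Gamma$, these neighborhoods should be of measure zero.

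Cauchy--Schwarz then gives $\|w_h\|_{L^1(\mathcal S_h^+)}\le|\mathcal S_h^+|^{1/2}\|w_h\|_{0,\mathcal S_h^+}$, with $|\mathcal S_h^+|\le\sum_e h_e\|\rho_e\|_{L^\infty}\lesssim h^2$. This gives only $h$, so we sharpen it by working per edge. Since $w_h$ is affine on $K_e$, we have $\|w_h\|_{L^\infty(K_e)}\lesssim h_e^{-1}\|w_h\|_{0,K_e}$. Hence
\[
\|w_h\|_{L^1(\mathcal S_e\cap\Omega_h)}\lesssim |\mathcal S_e|\,h_e^{-1}\|w_h\|_{0,K_e}\lesssim h_e^{2}\|w_h\|_{0,K_e}.
\]
Summing over the $O(h^{-1})$ boundary edges with Cauchy--Schwarz gives $h^{2}\cdot h^{-1/2}\|w_h\|_{0,\Omega_h}=h^{3/2}\|w_h\|_{0,\Omega_h}$. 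This is \eqref{eq:volume-consistency}. One also needs $\mathcal S_e\cap\Omega_h\subset K_e$; for small $h$ this holds because the strip has width $O(h_e^2)$ next to $e$.
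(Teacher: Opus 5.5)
Your proposal is correct and follows essentially the same route as the paper. For the boundary estimate, you pair $\|\widetilde u_{\mathrm o}\|_{0,\Gamma_h}\lesssim h^3\|\widetilde u\|_{H^4(\widetilde\Omega)}$ from Lemma~\ref{lem:H4-odd-chord} with the scaling bound $\|q_n(\bm Q_h)\|_{0,e}\lesssim h_e^{-3/2}\|\bm Q_h\|_{0,K_e}$. For the volume estimate, you combine an $\mathcal O(h^3)$ area bound near each boundary edge, the inverse inequality $\|w_h\|_{L^\infty(K)}\lesssim h^{-1}\|w_h\|_{0,K}$, and Cauchy--Schwarz over $\mathcal O(h^{-1})$ terms.

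The only difference is bookkeeping. You sum over edges via $\mathcal S_h^+\subset\bigcup_e\mathcal S_e$ and $\mathcal S_e\cap\Omega_h\subset K_e$, whereas the paper sums directly over elements with $|K\cap\mathcal S_h^+|\lesssim h^3$. The inclusion $\mathcal S_e\cap\Omega_h\subset K_e$ is best justified by $|\rho_e|\lesssim h_e\operatorname{dist}(\cdot,\partial e)$ together with the minimum angle of $K_e$; the $\mathcal O(h_e^2)$ width alone does not suffice near the endpoints, where $K_e$ narrows to a wedge.
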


\begin{proof}
A standard scaling argument gives
\(\|q_n(\bm Q_h)\|_{0,e}\lesssim h_e^{-3/2}\|\bm Q_h\|_{0,K_e}\).
Together with \eqref{eq:green-residual-odd-form},
\eqref{eq:H4-odd-chord}, and quasi-uniformity, this yields
\[
 |R_h^{\mathrm{bd}}(\widetilde{\bm M},\widetilde u;\bm Q_h)|
 \le\|\widetilde u_{\mathrm o}\|_{0,\Gamma_h}
             \|q_n(\bm Q_h)\|_{0,\Gamma_h}
 \lesssim h^{3/2}\|\widetilde u\|_{H^4(\widetilde\Omega)}
                         \|\bm Q_h\|_{0,\Omega_h}.
\]
The boundary-distance estimate gives \(|K\cap\mathcal S_h^+|\lesssim h^3\),
and only \(\mathcal O(h^{-1})\) elements intersect \(\mathcal S_h^+\).
The inverse inequality and Cauchy--Schwarz yield, for \(w_h\in V_h\),
\[
\begin{aligned}
 \|w_h\|_{L^1(\mathcal S_h^+)}
 &\le\sum_{K\cap\mathcal S_h^+\ne\varnothing}
       |K\cap\mathcal S_h^+|\,\|w_h\|_{L^\infty(K)}\\
 &\lesssim h^2\sum_{K\cap\mathcal S_h^+\ne\varnothing}\|w_h\|_{0,K}
 \lesssim h^{3/2}\|w_h\|_{0,\Omega_h}.
\end{aligned}
\]
Thus
\(|R_h^{\mathrm{vol}}(\widetilde{\bm M};w_h)|
\le\|r_{\mathrm{ext}}\|_{L^\infty(\mathcal U)}
\|w_h\|_{L^1(\mathcal S_h^+)}\), which proves \eqref{eq:volume-consistency}.
\end{proof}

\begin{remark}[role of the shear-mean constraint]
\label{rem:role-shear-mean-constraint}
The improvement in \eqref{eq:boundary-consistency} comes from testing
only the odd trace on polygonal approximations of curved boundary
pieces. Without the shear-mean constraint, the full-trace bound
\(\|\widetilde u\|_{0,\Gamma_h}\lesssim
h^2\|\widetilde u\|_{H^3(\widetilde\Omega)}\) gives only
\(h^{1/2}\) consistency. The constraint is imposed uniformly on all
boundary edges; exact straight edges contribute no boundary consistency
error.
\end{remark}

\begin{corollary}[moment and displacement errors]
\label{cor:total-error}
The discrete solution of \eqref{eq:discrete-method} satisfies
\begin{equation}
 \|\widetilde{\bm M}-\bm M_h\|_{0,\Omega_h}
 +\|\widetilde u-u_h\|_{0,\Omega_h}
 \lesssim h^{3/2}\|u\|_{H^4(\Omega)}
             +\|R_h^{\mathrm{vol}}\|_{V_h'}.
 \label{eq:total-error}
\end{equation}
The errors are of order \(h^{3/2}\) for \(\Omega_h\subset\Omega\),
and also for general approximations with
\(r_{\mathrm{ext}}\in L^\infty(\mathcal U)\) by
\eqref{eq:volume-consistency}.
\end{corollary}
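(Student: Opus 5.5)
The plan is the standard Strang-type argument for saddle-point problems: compare the discrete solution with the constrained interpolant \(\Pi_h\widetilde{\bm M}\) from Proposition~\ref{prop:constrained-commuting-approximation} and with \(P_h^1\widetilde u\). First we check that \(\widetilde{\bm M}\) is admissible there. Since \(u=0\) on \(\Gamma\) and \(M_{nn}=0\) there (\(u\in H^4\)), we have \(\widetilde{\bm M}\in H^2(\widetilde\Omega;\mathbb S)\) with \(\widetilde M_{nn}=0\) on every \(\zeta\in\mathcal C_\Gamma\), and \(\|\widetilde{\bm M}\|_{H^2(\widetilde\Omega)}\lesssim\|u\|_{H^4(\Omega)}\). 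Hence \(\bm\Theta_h:=\Pi_h\widetilde{\bm M}-\bm M_h\in\Sigma_h^{nn,q}\) and \(\xi_h:=P_h^1\widetilde u-u_h\in V_h\).

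Next we derive the error equations. Subtracting \eqref{eq:discrete-method} from the residual definitions gives, for all \(\bm Q_h\in\Sigma_h^{nn,q}\) and \(w_h\in V_h\),
\[
(\mathbb A_\sigma\bm\Theta_h,\bm Q_h)_{\Omega_h}-(\xi_h,\DivDiv\bm Q_h)_{\Omega_h}
=R_h^{\mathrm{bd}}(\widetilde{\bm M},\widetilde u;\bm Q_h)
+(\mathbb A_\sigma(\Pi_h\widetilde{\bm M}-\widetilde{\bm M}),\bm Q_h)_{\Omega_h}.
\]
Here we used \((P_h^1\widetilde u-\widetilde u,\DivDiv\bm Q_h)=0\), which holds because \(\DivDiv\bm Q_h\in V_h\). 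For the second equation, \eqref{eq:dual-interpolant-commuting} gives
\[
(\DivDiv\bm\Theta_h,w_h)_{\Omega_h}=(P_h^1\DivDiv\widetilde{\bm M}-f_h,w_h)_{\Omega_h}=R_h^{\mathrm{vol}}(\widetilde{\bm M};w_h).
\]
By Brezzi's theory, with the uniform inf--sup condition of Theorem~\ref{thm:infsup} and the coercivity of \(\mathbb A_\sigma\) on the discrete kernel,
\[
\|\bm\Theta_h\|_{H(\DivDiv,\Omega_h)}+\|\xi_h\|_{0,\Omega_h}\lesssim
\|R_h^{\mathrm{bd}}\|_{(\Sigma_h^{nn,q})'}+\|\widetilde{\bm M}-\Pi_h\widetilde{\bm M}\|_{0,\Omega_h}+\|R_h^{\mathrm{vol}}\|_{V_h'} .
\]

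We then bound each term on the right. Proposition~\ref{thm:boundary-consistency} gives \(\|R_h^{\mathrm{bd}}\|_{(\Sigma_h^{nn,q})'}\lesssim h^{3/2}\|\widetilde u\|_{H^4(\widetilde\Omega)}\lesssim h^{3/2}\|u\|_{H^4(\Omega)}\), since the \(L^2\) norm is dominated by the graph norm. The interpolation estimate \eqref{eq:dual-interpolant-approximation} gives \(h^{3/2}\|\widetilde{\bm M}\|_{H^2(\widetilde\Omega)}\lesssim h^{3/2}\|u\|_{H^4(\Omega)}\).

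To conclude, the triangle inequality gives
\[
\|\widetilde{\bm M}-\bm M_h\|\le\|\widetilde{\bm M}-\Pi_h\widetilde{\bm M}\|+\|\bm\Theta_h\|,\qquad
\|\widetilde u-u_h\|\le\|\widetilde u-P_h^1\widetilde u\|+\|\xi_h\|.
\]
The last projection error is \(\lesssim h^2\|\widetilde u\|_{H^2(\widetilde\Omega)}\). Together these prove \eqref{eq:total-error}. The two stated rates then follow: if \(\Omega_h\subset\Omega\), then \(R_h^{\mathrm{vol}}=0\); otherwise \eqref{eq:volume-consistency} applies.

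The only delicate point is the saddle-point stability step. We must make sure that the Brezzi bound holds uniformly in \(h\) in the graph norm on \(\Sigma_h^{nn,q}\), and that the consistency functional is measured in a dual norm that the graph norm dominates. Both follow from Theorem~\ref{thm:infsup} and from the fact that \eqref{eq:boundary-consistency} uses only \(\|\bm Q_h\|_{0,\Omega_h}\).
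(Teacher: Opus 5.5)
Your proposal is correct and follows essentially the same route as the paper: the same error equations for \(\Pi_h\widetilde{\bm M}-\bm M_h\) and \(P_h^1\widetilde u-u_h\), then Brezzi stability via Theorem~\ref{thm:infsup}, then Propositions~\ref{prop:constrained-commuting-approximation} and~\ref{thm:boundary-consistency}, and finally the triangle inequality. You also spell out two points the paper leaves implicit: that \(\widetilde{\bm M}\) is admissible for the constrained interpolant, and that the \(\mathbb P_1\)-projection error of \(\widetilde u\) is of order \(h^2\).
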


\begin{proof}
Set \(\bm\Xi_h:=\Pi_h\widetilde{\bm M}-\bm M_h\) and
\(e_h:=P_h^1\widetilde u-u_h\).
The definitions \eqref{eq:boundary-residual}--\eqref{eq:volume-residual},
projection orthogonality, and \eqref{eq:dual-interpolant-commuting} give
the error equations
\begin{equation}
\label{eq:discrete-error-equations}
\begin{aligned}
 (\mathbb A_\sigma\bm\Xi_h,\bm Q_h)_{\Omega_h}
 -(e_h,\DivDiv\bm Q_h)_{\Omega_h}
 &=(\mathbb A_\sigma(\Pi_h\widetilde{\bm M}-\widetilde{\bm M}),
       \bm Q_h)_{\Omega_h}\\
 &\quad+R_h^{\mathrm{bd}}(\widetilde{\bm M},\widetilde u;\bm Q_h),\\
 (\DivDiv\bm\Xi_h,w_h)_{\Omega_h}
 &=R_h^{\mathrm{vol}}(\widetilde{\bm M};w_h),
\end{aligned}
\end{equation}
for \((\bm Q_h,w_h)\in\Sigma_h^{nn,q}\times V_h\).
Brezzi's theory and \eqref{eq:residual-dual-norms} yield
\[
\begin{aligned}
 \|\bm\Xi_h\|_{H(\DivDiv,\Omega_h)}+\|e_h\|_{0,\Omega_h}
 \lesssim\|\Pi_h\widetilde{\bm M}-\widetilde{\bm M}\|_{0,\Omega_h}
 +\|R_h^{\mathrm{bd}}\|_{(\Sigma_h^{nn,q})'}
       +\|R_h^{\mathrm{vol}}\|_{V_h'}.
\end{aligned}
\]
The estimates \eqref{eq:dual-interpolant-approximation} and
\eqref{eq:boundary-consistency}, followed by the triangle inequality,
prove \eqref{eq:total-error}.
\end{proof}

\begin{remark}[polygonal domains]
The estimate \eqref{eq:total-error} also applies when \(\Omega\) is
polygonal, since the effective-shear mean constraint is imposed on every
boundary edge. In that special case the constraint is unnecessary for
consistency, and omitting it can retain the second-order approximation of
the underlying element. We impose it uniformly so that the method does
not require a classification of the physical boundary.
\end{remark}

\subsection{Displacement \texorpdfstring{\(L^2\)}{L2} error estimate}
\label{subsec:displacement-L2-error}

Under the \(H^4\)-regularity assumption stated below, we use a duality
argument to derive a refined displacement estimate. The argument requires
neither \(\Omega_h\subset\Omega\) nor
\(r_{\mathrm{ext}}\in L^\infty(\mathcal U)\).
We retain \(u\in H^4(\Omega)\), the compatible extension
\eqref{eq:compatible-moment-extension}, and the prescribed load
\(\widetilde f\in L^2(\widetilde\Omega)\) from \eqref{eq:discrete-load}. Write
\(\bm E_h:=\widetilde{\bm M}-\bm M_h\), and recall the projected errors
\begin{equation*}
 e_h=P_h^1\widetilde u-u_h, \qquad \bm\Xi_h=\Pi_h\widetilde{\bm M}-\bm M_h.
\end{equation*}

For \(w_h\in V_h\), a scaling argument and
\(|K\cap\mathcal S_h^+|\lesssim h^3\) give
\begin{equation}
 \|w_h\|_{0,\mathcal S_h^+}^2
 \le\sum_{K\in\mathcal T_h}|K\cap\mathcal S_h^+|
                     \|w_h\|_{L^\infty(K)}^2
 \lesssim h\|w_h\|_{0,\Omega_h}^2.
 \label{eq:discrete-outer-strip}
\end{equation}
which leads to 
 $\|R_h^{\mathrm{vol}}\|_{V_h'}
 \lesssim h^{1/2}\|r_{\mathrm{ext}}\|_{0,\mathcal S_h^+}
 \lesssim h^{1/2}
 (\|u\|_{4,\Omega}+\|\widetilde f\|_{0,\widetilde\Omega})$.
Therefore, \eqref{eq:total-error} and
\eqref{eq:dual-interpolant-approximation} yield a preliminary estimate
\begin{equation}
 \|\bm E_h\|_{0,\Omega_h}+\|\bm\Xi_h\|_{0,\Omega_h}
 \lesssim h^{1/2}
       \bigl(\|u\|_{4,\Omega}+\|\widetilde f\|_{0,\widetilde\Omega}\bigr).
 \label{eq:preliminary-L2-moment-bound}
\end{equation}
The stronger \(h^{3/2}\) moment rate is not asserted under these weaker
data assumptions, but \eqref{eq:preliminary-L2-moment-bound} suffices for
the displacement estimate below.

\paragraph{Duality argument}
We introduce the auxiliary plate problem on $\Omega$:
\begin{equation}
 \bm Z=\mathbb C_\sigma D^2z,\quad
 \DivDiv\bm Z=g\ \text{ in }\Omega,\qquad
 z=0,\quad Z_{nn}=0\ \text{ on }\Gamma.
 \label{eq:fixed-domain-dual-problem}
\end{equation}

\begin{assumption}[$H^4$-regularity]
\label{ass:fixed-domain-dual-regularity}
For \(g\in L^2(\Omega)\), the solution of
\eqref{eq:fixed-domain-dual-problem} satisfies
\begin{equation}
 \|z\|_{4,\Omega}+\|\bm Z\|_{2,\Omega}
 \lesssim\|g\|_{0,\Omega}.
 \label{eq:fixed-domain-dual-regularity}
\end{equation}
\end{assumption}
This assumption holds, for example, if every boundary component of
\(\Omega\) is of class \(C^4\), a stronger requirement than that used for
the geometric error estimates.

Let
\(e_h^0\) be the zero extension to \(\Omega\) of
\(e_h|_{\Omega\cap\Omega_h}\), and take \(g=e_h^0\) in
\eqref{eq:fixed-domain-dual-problem}.
Choose a bounded \(H^4\) extension \(\widetilde z\) and set
\(\widetilde{\bm Z}:=\mathbb C_\sigma D^2\widetilde z\).
Then \eqref{eq:fixed-domain-dual-regularity} gives
\begin{equation}
 \|\widetilde z\|_{4,\widetilde\Omega}
 +\|\widetilde{\bm Z}\|_{2,\widetilde\Omega}
 \lesssim\|z\|_{4,\Omega}
 \lesssim\|e_h\|_{0,\Omega_h}.
 \label{eq:extended-dual-regularity}
\end{equation}

\begin{proposition}[perturbed duality estimate]
\label{prop:dual-error-identity}
Under Assumption~\ref{ass:fixed-domain-dual-regularity}, one has
\begin{subequations}
\begin{align}
 (1-Ch^{1/2})\|e_h\|_{0,\Omega_h}^2
 &\le(e_h,\DivDiv\Pi_h\widetilde{\bm Z})_{\Omega_h},
 \label{eq:dual-coercivity}\\
 \bigl|(e_h,\DivDiv\Pi_h\widetilde{\bm Z})_{\Omega_h}\bigr|
 &\lesssim h^2
       \bigl(\|u\|_{4,\Omega}+\|\widetilde f\|_{0,\widetilde\Omega}\bigr)
       \|z\|_{4,\Omega}.
 \label{eq:dual-pairing-bound}
\end{align}
\end{subequations}
\end{proposition}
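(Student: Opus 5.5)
We treat the two inequalities separately. The first compares \((e_h,\DivDiv\Pi_h\widetilde{\bm Z})_{\Omega_h}\) with \(\|e_h\|_{0,\Omega_h}^2\) through the dual equation. The second inserts the discrete error equations with the discrete test function \(\Pi_h\widetilde{\bm Z}\in\Sigma_h^{nn,q}\).

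\emph{Coercivity \eqref{eq:dual-coercivity}.} By \eqref{eq:dual-interpolant-commuting} and \(e_h\in V_h\), we have \((e_h,\DivDiv\Pi_h\widetilde{\bm Z})_{\Omega_h}=(e_h,\DivDiv\widetilde{\bm Z})_{\Omega_h}\). On \(\Omega\cap\Omega_h\), the dual problem gives \(\DivDiv\widetilde{\bm Z}=e_h^0=e_h\). This contributes \(\|e_h\|_{0,\Omega_h}^2-\|e_h\|_{0,\mathcal S_h^+}^2\). It leaves the remainder \((e_h,\DivDiv\widetilde{\bm Z})_{\mathcal S_h^+}\), which is at most \(\|e_h\|_{0,\mathcal S_h^+}\|\widetilde{\bm Z}\|_{2,\widetilde\Omega}\). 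We apply \eqref{eq:discrete-outer-strip}, which gives \(\|e_h\|_{0,\mathcal S_h^+}\lesssim h^{1/2}\|e_h\|_{0,\Omega_h}\), together with \eqref{eq:extended-dual-regularity}. Both \(\mathcal S_h^+\)-terms are then bounded by \(Ch^{1/2}\|e_h\|_{0,\Omega_h}^2\), which yields \eqref{eq:dual-coercivity}.

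\emph{Pairing bound \eqref{eq:dual-pairing-bound}.} Take \(\bm Q_h=\Pi_h\widetilde{\bm Z}\) in the first error equation \eqref{eq:discrete-error-equations}:
\[
(e_h,\DivDiv\Pi_h\widetilde{\bm Z})
=(\mathbb A_\sigma\bm\Xi_h,\Pi_h\widetilde{\bm Z})
-(\mathbb A_\sigma(\Pi_h\widetilde{\bm M}-\widetilde{\bm M}),\Pi_h\widetilde{\bm Z})
-R_h^{\mathrm{bd}}(\widetilde{\bm M},\widetilde u;\Pi_h\widetilde{\bm Z}).
\]
The first two terms combine to \((\mathbb A_\sigma(\widetilde{\bm M}-\bm M_h),\Pi_h\widetilde{\bm Z})=(\mathbb A_\sigma\bm E_h,\Pi_h\widetilde{\bm Z})\). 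We split \(\Pi_h\widetilde{\bm Z}=\widetilde{\bm Z}+(\Pi_h\widetilde{\bm Z}-\widetilde{\bm Z})\). The piece with \(\Pi_h\widetilde{\bm Z}-\widetilde{\bm Z}\) is bounded by \(\|\bm E_h\|\cdot h^{3/2}\|\widetilde{\bm Z}\|_2\), using \eqref{eq:dual-interpolant-approximation}. Combined with the preliminary estimate \eqref{eq:preliminary-L2-moment-bound}, this gives \(h^2\). For \((\mathbb A_\sigma\bm E_h,\widetilde{\bm Z})=(\bm E_h,D^2\widetilde z)\), we use the elementwise Green identity \eqref{eq:second-green-identity} with roles swapped. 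Here \(\bm E_h\) is \(H(\DivDiv)\)-conforming on \(\Omega_h\) and \(\widetilde z\in H^4\). This produces \((\DivDiv\bm E_h,\widetilde z)_{\Omega_h}\) plus boundary terms. Since \(\widetilde z=0\) at boundary vertices, \(E_{h,nn}\) involves only \(\widetilde M_{nn}\), and \(\bm M_h\in\Sigma_h^{nn,q}\), the boundary terms reduce to three pieces:
\begin{itemize}
\item \(\int_e q_n(\bm M_h)\widetilde z_{\mathrm o}\), which is \(O(h^{3/2})\cdot h^3\) after an inverse estimate, using Lemma~\ref{lem:H4-odd-chord} applied to \(\widetilde z\);
\item \(\int_e q_n(\widetilde{\bm M})\widetilde z\) and \(\int_e\widetilde M_{nn}\partial_n\widetilde z\), which are controlled by \(\|\widetilde z\|_{0,\Gamma_h}\lesssim h^2\|z\|_4\) and by \(\|\widetilde M_{nn}\|_{0,\Gamma_h}\lesssim h\), as in Step~1 of Proposition~\ref{prop:constrained-commuting-approximation}.
\end{itemize}
The last term needs a mean/odd refinement to reach \(h^2\). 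We use \(\widetilde M_{nn}\) vanishing on \(\Gamma\) and the \(O(h^2)\) mean bound of \eqref{eq:normal-mean-geometric-bound}, paired with the mean of \(\partial_n\widetilde z\) plus an \(O(h)\) fluctuation. For the volume term \((\DivDiv\bm E_h,\widetilde z)\), we write \(\DivDiv\bm M_h=f_h=P_h^1\widetilde f\) and \(\DivDiv\widetilde{\bm M}=\widetilde f+r_{\mathrm{ext}}\). This gives \((\widetilde f-P_h^1\widetilde f,\widetilde z-P_h^1\widetilde z)+(r_{\mathrm{ext}},\widetilde z)_{\mathcal S_h^+}\). The first part is \(\lesssim\|\widetilde f\|\,h^2\|\widetilde z\|_2\). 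The second uses the strip estimate \eqref{eq:boundary-strip-estimate} with \(\widetilde z|_\Gamma=0\), so \(\|\widetilde z\|_{0,\mathcal S_h^+}\lesssim h^{2}\|z\|_4\) up to trace factors.

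\emph{Remaining residual and main obstacle.} The remaining term is \(R_h^{\mathrm{bd}}(\widetilde{\bm M},\widetilde u;\Pi_h\widetilde{\bm Z})=-\sum_e\int_e\widetilde u_{\mathrm o}q_n(\Pi_h\widetilde{\bm Z})\), by \eqref{eq:green-residual-odd-form}. We bound it by \(\|\widetilde u_{\mathrm o}\|_{0,\Gamma_h}\|q_n(\Pi_h\widetilde{\bm Z})\|_{0,\Gamma_h}\lesssim h^3\|u\|_4\|z\|_4\), using Lemma~\ref{lem:H4-odd-chord} and \eqref{eq:dual-interpolant-shear}. This is where the \(H^2\)-regular interpolant matters: it avoids the \(h^{-3/2}\) inverse estimate. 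The delicate step, which we expect to be the main obstacle, is the normal--normal boundary term \(\int_{\Gamma_h}\widetilde M_{nn}\partial_n\widetilde z\) in the Green identity for \((\bm E_h,D^2\widetilde z)\). A crude bound gives only \(h\cdot\|\partial_n\widetilde z\|\). To reach \(h^2\) we first try subtracting the edge mean of \(\partial_{n_e}\widetilde z\). The mean part then uses the \(O(h^{2})\) edge-mean bound from \eqref{eq:normal-mean-geometric-bound}, and the fluctuation part gives \(h\cdot h\) by Poincaré on each edge.
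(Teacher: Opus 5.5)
\textbf{Gap: the discrete-shear term.} Your coercivity argument is the paper's Step~1. Your bounds for $(\mathbb A_\sigma\bm E_h,\Pi_h\widetilde{\bm Z}-\widetilde{\bm Z})_{\Omega_h}$ and for $R_h^{\mathrm{bd}}(\widetilde{\bm M},\widetilde u;\Pi_h\widetilde{\bm Z})$ are also the paper's. The proof fails at the first boundary piece of your Green identity for $(\bm E_h,D^2\widetilde z)_{\Omega_h}$, namely $\sum_e\int_e q_n(\bm M_h)\,\widetilde z_{\mathrm o}\,\mathrm ds$.

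The inverse estimate gives $\|q_n(\bm M_h)\|_{0,\Gamma_h}\lesssim h^{-3/2}\|\bm M_h\|_{0,\Omega_h}$, and Lemma~\ref{lem:H4-odd-chord} gives $\|\widetilde z_{\mathrm o}\|_{0,\Gamma_h}\lesssim h^3\|z\|_{4,\Omega}$. The term is therefore bounded only by $h^{3/2}\|\bm M_h\|_{0,\Omega_h}\|z\|_{4,\Omega}$. Since $\bm M_h$ approximates $\widetilde{\bm M}$, its norm is $O(1)$, so this is $O(h^{3/2})$, half an order short of \eqref{eq:dual-pairing-bound}. Your ``$O(h^{3/2})\cdot h^3$'' does not correspond to any available estimate.

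The missing step is the one you already use for $R_h^{\mathrm{bd}}(\widetilde{\bm M},\widetilde u;\Pi_h\widetilde{\bm Z})$: split $\bm M_h=\Pi_h\widetilde{\bm M}-\bm\Xi_h$.
\begin{itemize}
\item The piece $\int_e q_n(\Pi_h\widetilde{\bm M})\,\widetilde z_{\mathrm o}$ is $O(h^3)\|u\|_{4,\Omega}\|z\|_{4,\Omega}$ by the stable shear trace \eqref{eq:dual-interpolant-shear}.
\item The piece $\int_e q_n(\bm\Xi_h)\,\widetilde z_{\mathrm o}$ is $O(h^{3/2}\|\bm\Xi_h\|_{0,\Omega_h})\|z\|_{4,\Omega}$ by the inverse estimate. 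The preliminary bound \eqref{eq:preliminary-L2-moment-bound} makes this $O(h^2)$.
\end{itemize}
This is exactly how the paper treats $R_h^{\mathrm{bd}}(\widetilde{\bm Z},\widetilde z;\Pi_h\widetilde{\bm M})$ and $R_h^{\mathrm{bd}}(\widetilde{\bm Z},\widetilde z;\bm\Xi_h)$. So the real obstacle is this discrete-shear term, not the normal--normal term you singled out.

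\textbf{Comparison with the paper.} With that repair, the rest of your route is a valid alternative for the $\widetilde{\bm M}$ part.
\begin{itemize}
\item $\int_e q_n(\widetilde{\bm M})\widetilde z$ is $O(h^2)$, since $\|\widetilde z\|_{0,\Gamma_h}\lesssim h^2\|z\|_{4,\Omega}$.
\item Your mean/fluctuation splitting of $\int_e \widetilde M_{n_en_e}\partial_{n_e}\widetilde z$ works. Use \eqref{eq:normal-mean-geometric-bound} for the edge means, and use $\|\widetilde M_{n_en_e}\|_{0,\Gamma_h}\lesssim h$ (Step~2 of Proposition~\ref{prop:constrained-commuting-approximation}) together with an edgewise Poincar\'e inequality for the fluctuation.
\item The volume term yields $(r_{\mathrm{ext}},\widetilde z)_{\mathcal S_h^+}$, which is even $O(h^3)$.
\end{itemize}
The paper never integrates $\widetilde{\bm M}$ by parts on $\Omega_h$. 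It uses the exact identity $(D^2z,\bm M)_\Omega=(z,f)_\Omega$ to write $(D^2\widetilde z,\widetilde{\bm M})_{\Omega_h}-(\widetilde z,\widetilde f)_{\Omega_h}$ as an integral over $\mathcal S_h^+$ minus one over $\mathcal S_h^-$. It then bounds this in $L^\infty$ using the $O(h^2)$ area of $\Omega\triangle\Omega_h$. That sidesteps the normal--normal boundary term entirely, while your version instead reuses the interpolation estimates of Proposition~\ref{prop:constrained-commuting-approximation}.
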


\begin{proof}
{\it Step 1: exterior perturbation.}
Commutativity \eqref{eq:dual-interpolant-commuting} and the auxiliary problem \eqref{eq:fixed-domain-dual-problem} give
\[
\begin{aligned}
 (e_h,\DivDiv\Pi_h\widetilde{\bm Z})_{\Omega_h}
 &= \|e_h\|_{0,\Omega\cap\Omega_h}^2
       +(e_h,\DivDiv\widetilde{\bm Z})_{\mathcal S_h^+}  \\
  &= \|e_h\|_{0,\Omega_h}^2-\|e_h\|_{0,\mathcal S_h^+}^2
       +(e_h,\DivDiv\widetilde{\bm Z})_{\mathcal S_h^+}.
\end{aligned}
\]
By \eqref{eq:discrete-outer-strip} and
\eqref{eq:extended-dual-regularity}, the last two terms have bound \(Ch^{1/2}\|e_h\|_{0,\Omega_h}^2\), leading to \eqref{eq:dual-coercivity}.

{\it Step 2: error identity and geometric estimate.}
Test the first equation in \eqref{eq:discrete-error-equations}
with \(\Pi_h\widetilde{\bm Z}\).
Using \(\mathbb A_\sigma\widetilde{\bm Z}=D^2\widetilde z\),
\(\DivDiv\bm M_h=f_h\), and
\(\bm M_h=\Pi_h\widetilde{\bm M}-\bm\Xi_h\), we obtain
\[
\begin{aligned}
 (e_h,\DivDiv\Pi_h\widetilde{\bm Z})_{\Omega_h}
 ={}&(\mathbb A_\sigma\bm E_h,\Pi_h\widetilde{\bm Z})_{\Omega_h}
       -R_h^{\mathrm{bd}}(\widetilde{\bm M},\widetilde u;
                          \Pi_h\widetilde{\bm Z})\\
 ={}&(\mathbb A_\sigma\bm E_h,
             \Pi_h\widetilde{\bm Z}-\widetilde{\bm Z})_{\Omega_h}
       +(D^2\widetilde z,\widetilde{\bm M})_{\Omega_h}
       -(D^2\widetilde z,\bm M_h)_{\Omega_h}\\
 &-R_h^{\mathrm{bd}}(\widetilde{\bm M},\widetilde u;
                     \Pi_h\widetilde{\bm Z})\\
 ={}&(\mathbb A_\sigma\bm E_h,
             \Pi_h\widetilde{\bm Z}-\widetilde{\bm Z})_{\Omega_h}
       +\mathcal G_h(u,z)
       -R_h^{\mathrm{bd}}(\widetilde{\bm Z},\widetilde z;\bm M_h)\\
 &-R_h^{\mathrm{bd}}(\widetilde{\bm M},\widetilde u;\Pi_h\widetilde{\bm Z})
       +(\widetilde z,\widetilde f-P_h^1\widetilde f)_{\Omega_h}.
\end{aligned}
\]
With \(\mathcal S_h^-:=\Omega\setminus\Omega_h\), the geometric term is
\[
\begin{aligned}
 \mathcal G_h(u,z)
 :=(D^2\widetilde z,\widetilde{\bm M})_{\Omega_h}
                        -(\widetilde z,\widetilde f)_{\Omega_h}
 =\left(\int_{\mathcal S_h^+}-\int_{\mathcal S_h^-}\right)
       \bigl(D^2\widetilde z:\widetilde{\bm M}
                      -\widetilde z\,\widetilde f\bigr)\,dx,
\end{aligned}
\]
where the second equality follows from
\((D^2z,\bm M)_\Omega=(z,f)_\Omega\).
The symmetric difference has area \(\mathcal O(h^2)\) and lies
within distance \(\mathcal O(h^2)\) of \(\Gamma\).
Thus \(\widetilde z|_\Gamma=0\) and Sobolev embedding give
\(\|\widetilde z\|_{L^\infty(\Omega\triangle\Omega_h)}
\lesssim h^2\|\nabla\widetilde z\|_{L^\infty(\mathcal U)}
\lesssim h^2\|z\|_{4,\Omega}\). 
Bounding \(D^2\widetilde z\) and \(\widetilde{\bm M}\) in
\(L^\infty\), and the load term by Cauchy--Schwarz, yields
\begin{equation}
 |\mathcal G_h(u,z)|
 \lesssim
 \bigl(h^2\|u\|_{4,\Omega}
           +h^3\|\widetilde f\|_{0,\widetilde\Omega}\bigr)\|z\|_{4,\Omega}.
 \label{eq:leading-geometric-defect-bound}
\end{equation}
{\it Step 3: remaining terms.}
Using \(\bm M_h=\Pi_h\widetilde{\bm M}-\bm\Xi_h\), the approximation
estimate \eqref{eq:dual-interpolant-approximation}, the odd-trace identity
\eqref{eq:green-residual-odd-form}, the estimate
\eqref{eq:H4-odd-chord}, and a scaling argument imply
\[
 \bigl|(\mathbb A_\sigma\bm E_h,
             \Pi_h\widetilde{\bm Z}-\widetilde{\bm Z})_{\Omega_h}\bigr|
       +|R_h^{\mathrm{bd}}(\widetilde{\bm Z},\widetilde z;\bm\Xi_h)|
 \lesssim h^{3/2}
       \bigl(\|\bm E_h\|_{0,\Omega_h}+\|\bm\Xi_h\|_{0,\Omega_h}\bigr)
       \|z\|_{4,\Omega}.
\]
By \eqref{eq:preliminary-L2-moment-bound}, this is bounded by the
right-hand side of \eqref{eq:dual-pairing-bound}.
Using the stable shear trace \eqref{eq:dual-interpolant-shear}
instead of scaling gives
\[
 |R_h^{\mathrm{bd}}(\widetilde{\bm Z},\widetilde z;\Pi_h\widetilde{\bm M})|
       +|R_h^{\mathrm{bd}}(\widetilde{\bm M},\widetilde u;\Pi_h\widetilde{\bm Z})|
 \lesssim h^3\|u\|_{4,\Omega}\|z\|_{4,\Omega}.
\]
Finally, projection orthogonality and local approximation yield
\[
\begin{aligned}
 |(\widetilde z,\widetilde f-P_h^1\widetilde f)_{\Omega_h}|
 =|(\widetilde z-P_h^1\widetilde z,\widetilde f)_{\Omega_h}|
 \lesssim h^2\|\widetilde z\|_{2,\widetilde\Omega}
                   \|\widetilde f\|_{0,\widetilde\Omega}
 \lesssim h^2\|z\|_{4,\Omega}\|\widetilde f\|_{0,\widetilde\Omega}.
\end{aligned}
\]
Together with \eqref{eq:leading-geometric-defect-bound}, these estimates
prove \eqref{eq:dual-pairing-bound}.
\end{proof}

Combining \eqref{eq:dual-coercivity}, \eqref{eq:dual-pairing-bound}, and
\eqref{eq:extended-dual-regularity}, and taking \(h\) sufficiently small,
leads to the second-order error estimate for $\|e_h\|_{0,\Omega_h}$. The triangle inequality and the local
\(\mathbb P_1\)-projection estimate then yield the following result.

\begin{theorem}[displacement \(L^2\)-error estimate]
\label{thm:second-order-displacement-L2-error}
Suppose Assumptions~\ref{ass:polygonal-boundary} and
\ref{ass:fixed-domain-dual-regularity} hold, each \(\Gamma^{(i)}\)
is piecewise \(C^{1,1}\), and
\(\mathcal V_\Gamma\subset\mathcal V_h^\partial\).
Let \(u\in H^4(\Omega)\), let \(\widetilde u\) be the bounded extension in
\eqref{eq:compatible-moment-extension}, and let
\(\widetilde f\in L^2(\widetilde\Omega)\).
Then, for sufficiently small \(h\),
\[
 \|P_h^1\widetilde u-u_h\|_{0,\Omega_h}
       +\|\widetilde u-u_h\|_{0,\Omega_h}
 \lesssim h^2
       \bigl(\|u\|_{4,\Omega}+\|\widetilde f\|_{0,\widetilde\Omega}\bigr).
\]
\end{theorem}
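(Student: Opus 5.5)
The proof combines the two parts of Proposition~\ref{prop:dual-error-identity} for the projected error \(e_h=P_h^1\widetilde u-u_h\), and then adds the \(\mathbb P_1\)-projection error of the extension.

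First, we chain \eqref{eq:dual-coercivity} and \eqref{eq:dual-pairing-bound}. This gives
\[
 (1-Ch^{1/2})\|e_h\|_{0,\Omega_h}^2
 \lesssim h^2\bigl(\|u\|_{4,\Omega}+\|\widetilde f\|_{0,\widetilde\Omega}\bigr)\|z\|_{4,\Omega},
\]
where \(z\) solves \eqref{eq:fixed-domain-dual-problem} with \(g=e_h^0\). By \eqref{eq:extended-dual-regularity}, which relies on Assumption~\ref{ass:fixed-domain-dual-regularity}, we have \(\|z\|_{4,\Omega}\lesssim\|e_h^0\|_{0,\Omega}\le\|e_h\|_{0,\Omega_h}\). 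We choose \(h_0\) so small that \(Ch^{1/2}\le 1/2\) for \(h\le h_0\). If \(e_h\neq0\), dividing by \(\|e_h\|_{0,\Omega_h}\) yields
\[
\|P_h^1\widetilde u-u_h\|_{0,\Omega_h}\lesssim h^2\bigl(\|u\|_{4,\Omega}+\|\widetilde f\|_{0,\widetilde\Omega}\bigr).
\]
The constant \(C\) in \eqref{eq:dual-coercivity} is independent of \(h\) and \(e_h\), since it comes only from \eqref{eq:discrete-outer-strip} and the regularity bound. The smallness threshold is therefore uniform.

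Second, we apply the triangle inequality:
\[
\|\widetilde u-u_h\|_{0,\Omega_h}\le\|\widetilde u-P_h^1\widetilde u\|_{0,\Omega_h}+\|e_h\|_{0,\Omega_h}.
\]
The elementwise \(L^2\)-projection onto \(\mathbb P_1\) gives, by Bramble--Hilbert on each \(K\),
\[
\|\widetilde u-P_h^1\widetilde u\|_{0,\Omega_h}\lesssim h^2|\widetilde u|_{2,\Omega_h}\le h^2\|\widetilde u\|_{H^4(\widetilde\Omega)}\lesssim h^2\|u\|_{4,\Omega},
\]
where the last step uses the bounded extension from \eqref{eq:compatible-moment-extension}. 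Adding the two bounds gives the stated estimate.

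The only delicate point is the absorption step. The argument only needs the coercivity defect in \eqref{eq:dual-coercivity} to be \(o(1)\), and the \(h^{1/2}\) from the exterior strip supplies this. The factor \(\|z\|_{4,\Omega}\) on the right must be controlled by \(\|e_h\|\) rather than by data, and this is exactly what the choice \(g=e_h^0\) provides. All the substantive work, including the geometric term \(\mathcal G_h\) and the preliminary \(h^{1/2}\) moment bound \eqref{eq:preliminary-L2-moment-bound}, is already contained in Proposition~\ref{prop:dual-error-identity}. No inclusion between \(\Omega_h\) and \(\Omega\) and no \(L^\infty\) bound on \(r_{\mathrm{ext}}\) is needed.
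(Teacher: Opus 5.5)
Your proposal is correct and follows essentially the same approach as the paper. Like the paper, you chain \eqref{eq:dual-coercivity} and \eqref{eq:dual-pairing-bound}, bound \(\|z\|_{4,\Omega}\lesssim\|e_h\|_{0,\Omega_h}\) via \eqref{eq:extended-dual-regularity}, absorb the \(Ch^{1/2}\) defect for small \(h\), and conclude with the triangle inequality and the local \(\mathbb P_1\)-projection estimate.
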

 \section{Local cubic postprocessing}
\label{sec:supercloseness-postprocessing}

We adapt the standard local cubic postprocessing of
F\"uhrer and Heuer \cite[Section~4.1]{FuehrerHeuer2025} to the corrected
solution and the material tensor \(\mathbb C_\sigma\). It transfers the
moment estimate to a broken Hessian estimate, while the affine component
is controlled by the projected displacement error.

For \(K\in\mathcal T_h\), let \(P_K^1\) be the \(L^2(K)\)-projection onto
\(\mathbb P_1(K)\), and set
\[
 Z_3(K):=\{w\in\mathbb P_3(K):P_K^1w=0\},
 \quad
 a_K(v,w):=(\mathbb C_\sigma D^2v,D^2w)_K.
\]
We define \(u_h^\star|_K\in\mathbb P_3(K)\) by
\begin{equation}
\label{eq:local-cubic-postprocessing}
\begin{aligned}
 a_K(u_h^\star,w)&=(\bm M_h,D^2w)_K
       \quad\text{for all }w\in Z_3(K),\\
 P_K^1u_h^\star&=u_h|_K.
\end{aligned}
\end{equation}
Since \(\ker(D^2|_{\mathbb P_3(K)})=\mathbb P_1(K)\), ellipticity of
\(a_K\) gives a unique solution on every element. The reconstruction is
piecewise cubic and generally discontinuous. We write \(D_h^2\) for the
elementwise Hessian.

\begin{proposition}[postprocessing error decomposition]
\label{prop:local-postprocessing-errors}
Let \(e_h=P_h^1\widetilde u-u_h\).
For \(\widetilde u\) and \(\widetilde{\bm M}\) in
\eqref{eq:compatible-moment-extension}, we have
\begin{align}
 \|D_h^2(\widetilde u-u_h^\star)\|_{0,\Omega_h}
 &\lesssim h^2\|\widetilde u\|_{4,\widetilde\Omega}
       +\|\widetilde{\bm M}-\bm M_h\|_{0,\Omega_h},
 \label{eq:postprocessing-hessian-error}\\
 \|\widetilde u-u_h^\star\|_{0,\Omega_h}
 &\lesssim h^2\|D_h^2(\widetilde u-u_h^\star)\|_{0,\Omega_h}
       +\|e_h\|_{0,\Omega_h}.
 \label{eq:postprocessing-displacement-error}
\end{align}
\end{proposition}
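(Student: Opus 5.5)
Both estimates are purely elementwise, so we fix \(K\in\mathcal T_h\) and compare \(u_h^\star|_K\) with a local cubic comparison function \(w_K\in\mathbb P_3(K)\) built from \(\widetilde u\). We define \(w_K\) by the same local problem as \eqref{eq:local-cubic-postprocessing}, but with exact data:
\[
 a_K(w_K,w)=(\widetilde{\bm M},D^2w)_K\quad\forall w\in Z_3(K),
 \qquad P_K^1w_K=P_K^1\widetilde u .
\]
Since \(\widetilde{\bm M}=\mathbb C_\sigma D^2\widetilde u\), the first equation says that \(D^2w_K\) is the \(a_K\)-orthogonal projection of \(D^2\widetilde u\) onto \(D^2\mathbb P_3(K)\). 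Hence \(w_K\) is the \(a_K\)-best approximation of \(\widetilde u\) in \(\mathbb P_3(K)\) modulo affine functions. Coercivity and boundedness of \(\mathbb C_\sigma\), together with the Bramble--Hilbert lemma, give
\[
 \|D^2(\widetilde u-w_K)\|_{0,K}\lesssim h_K^2|\widetilde u|_{4,K}.
\]

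\emph{Hessian estimate \eqref{eq:postprocessing-hessian-error}.} Set \(\delta:=w_K-u_h^\star|_K\). Subtracting the two local problems gives \(a_K(\delta,w)=(\widetilde{\bm M}-\bm M_h,D^2w)_K\) for all \(w\in Z_3(K)\). Since \(D^2\) annihilates \(\mathbb P_1(K)\), \(a_K(\delta,\cdot)\) depends only on \(\delta-P_K^1\delta\in Z_3(K)\). Testing with \(w=\delta-P_K^1\delta\) and using ellipticity of \(a_K\) on \(D^2\mathbb P_3(K)\) yields
\[
 \|D^2\delta\|_{0,K}\lesssim\|\widetilde{\bm M}-\bm M_h\|_{0,K}.
\]
The triangle inequality then gives the bound on each \(K\), and summing squares over \(K\) proves \eqref{eq:postprocessing-hessian-error}. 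The constants depend only on \(\sigma\le\sigma_\ast\), since the bounds on \(\mathbb C_\sigma\) and \(\mathbb A_\sigma\) recorded after \eqref{eq:compliance-operator} are uniform.

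\emph{Displacement estimate \eqref{eq:postprocessing-displacement-error}.} We split
\[
 \widetilde u-u_h^\star=(I-P_K^1)(\widetilde u-u_h^\star)+P_K^1(\widetilde u-u_h^\star).
\]
By the constraint \(P_K^1u_h^\star=u_h|_K\), the second part equals \(P_K^1\widetilde u-u_h=e_h|_K\). For the first part, \((I-P_K^1)v\) is orthogonal to \(\mathbb P_1(K)\), so the standard Poincaré-type estimate on shape-regular elements (scaling plus the Bramble--Hilbert lemma with degree-one kernel) gives
\[
 \|(I-P_K^1)v\|_{0,K}\lesssim h_K^2|v|_{2,K}\qquad\text{for } v=\widetilde u-u_h^\star\in H^2(K).
\]
Summing over \(K\) proves \eqref{eq:postprocessing-displacement-error}.

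The only point requiring care is the uniformity of the local ellipticity constant of \(a_K\) on \(Z_3(K)\) measured in the \(H^2\)-seminorm, but this follows directly from \(\mathbb C_\sigma\bm N:\bm N\ge(1-\sigma)|\bm N|^2\), with no scaling needed. We do not expect a genuine obstacle: the boundary-correction machinery enters only through \(\|\widetilde{\bm M}-\bm M_h\|_{0,\Omega_h}\) and \(\|e_h\|_{0,\Omega_h}\), which the estimates leave on the right-hand side.
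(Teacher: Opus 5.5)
Your proposal is correct and matches the paper's proof step for step. Your comparison function $w_K$ is exactly the paper's $S_K\widetilde u$, since $(\widetilde{\bm M},D^2w)_K=a_K(\widetilde u,w)$; the subtraction of the local problems, the test function $(I-P_K^1)\delta$, and the estimate $\|(I-P_K^1)v\|_{0,K}\lesssim h_K^2|v|_{2,K}$ combined with $P_K^1(\widetilde u-u_h^\star)=e_h|_K$ are the same as in the paper.
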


\begin{proof}
For each \(K\), define \(S_K\widetilde u\in\mathbb P_3(K)\) by
\[
 a_K(S_K\widetilde u,w)=a_K(\widetilde u,w)
 \quad\forall w\in Z_3(K),\qquad
 P_K^1S_K\widetilde u=P_K^1\widetilde u.
\]
Polynomial approximation and ellipticity lead to $\|D^2(\widetilde u-S_K\widetilde u)\|_{0,K}
 \lesssim h_K^2|\widetilde u|_{4,K}$.
For \(\delta_K:=S_K\widetilde u-u_h^\star\), subtracting the two local
equations leads to \(a_K(\delta_K,w)=(\widetilde{\bm M}-\bm M_h,D^2w)_K\)
for all \(w\in Z_3(K)\). Since \(D^2P_K^1\delta_K=0\), the choice
\(w=(I-P_K^1)\delta_K\in Z_3(K)\) satisfies \(D^2w=D^2\delta_K\).
Ellipticity and the Cauchy--Schwarz inequality therefore yield
\(\|D^2\delta_K\|_{0,K}\lesssim
\|\widetilde{\bm M}-\bm M_h\|_{0,K}\).
Combining this estimate with the approximation bound for
\(S_K\widetilde u\), and summing over \(K\), proves
\eqref{eq:postprocessing-hessian-error}.

For \(v:=\widetilde u-u_h^\star\), the constraint in
\eqref{eq:local-cubic-postprocessing} gives \(P_K^1v=e_h|_K\).
The local estimate
\(\|v-P_K^1v\|_{0,K}\lesssim h_K^2\|D^2v\|_{0,K}\) proves
\eqref{eq:postprocessing-displacement-error}.
\end{proof}

The preceding decomposition and the error estimates of
Section~\ref{sec:stability-error} give the following bounds.

\begin{corollary}[postprocessing error bounds]
\label{cor:local-postprocessing-rates}
Under the hypotheses of Corollary~\ref{cor:total-error}, the first
estimate below holds. If Assumption~\ref{ass:fixed-domain-dual-regularity}
also holds, then so does the second:
\begin{align*}
 \|D_h^2(\widetilde u-u_h^\star)\|_{0,\Omega_h}
 &\lesssim h^{3/2}\|u\|_{4,\Omega}
       +\|R_h^{\mathrm{vol}}\|_{V_h'},\\
 \|\widetilde u-u_h^\star\|_{0,\Omega_h}
 &\lesssim h^2
       \bigl(\|u\|_{4,\Omega}
             +\|\widetilde f\|_{0,\widetilde\Omega}\bigr).
\end{align*}
By \eqref{eq:volume-consistency}, the first estimate is of order
\(h^{3/2}\) for inner approximations, and also for general approximations
when \(r_{\mathrm{ext}}\in L^\infty(\mathcal U)\). The second requires
neither \(\Omega_h\subset\Omega\) nor
\(r_{\mathrm{ext}}\in L^\infty(\mathcal U)\).
\end{corollary}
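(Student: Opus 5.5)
The plan is to insert the already-established error bounds of Section~\ref{sec:stability-error} into the decomposition of Proposition~\ref{prop:local-postprocessing-errors}. No new analysis is needed; the two estimates differ only in which earlier bound is invoked.

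For the first estimate, start from \eqref{eq:postprocessing-hessian-error}. Its first term $h^2\|\widetilde u\|_{4,\widetilde\Omega}$ is controlled by $h^2\|u\|_{4,\Omega}$, using the bounded extension in \eqref{eq:compatible-moment-extension}, and hence by $h^{3/2}\|u\|_{4,\Omega}$. The second term $\|\widetilde{\bm M}-\bm M_h\|_{0,\Omega_h}$ is bounded by \eqref{eq:total-error} from Corollary~\ref{cor:total-error}. That corollary needs only the hypotheses assumed here. The resulting bound is exactly $h^{3/2}\|u\|_{4,\Omega}+\|R_h^{\mathrm{vol}}\|_{V_h'}$. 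The remark on the order then follows from Proposition~\ref{thm:boundary-consistency}: $R_h^{\mathrm{vol}}=0$ for inner approximations, and \eqref{eq:volume-consistency} applies when $r_{\mathrm{ext}}\in L^\infty(\mathcal U)$.

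For the second estimate, use \eqref{eq:postprocessing-displacement-error}. Its last term $\|e_h\|_{0,\Omega_h}$ is bounded by $h^2(\|u\|_{4,\Omega}+\|\widetilde f\|_{0,\widetilde\Omega})$ via Theorem~\ref{thm:second-order-displacement-L2-error}. This is the only place where Assumption~\ref{ass:fixed-domain-dual-regularity} enters.

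The one point needing care is the Hessian term. We must not invoke the first estimate there, because under the weaker data assumptions $\|R_h^{\mathrm{vol}}\|_{V_h'}$ is only known to be $O(h^{1/2})$. Instead we combine \eqref{eq:postprocessing-hessian-error} with the preliminary bound \eqref{eq:preliminary-L2-moment-bound}. This gives $\|D_h^2(\widetilde u-u_h^\star)\|_{0,\Omega_h}\lesssim h^{1/2}(\|u\|_{4,\Omega}+\|\widetilde f\|_{0,\widetilde\Omega})$. That bound still depends on neither $\Omega_h\subset\Omega$ nor $r_{\mathrm{ext}}\in L^\infty(\mathcal U)$. Multiplying by $h^2$ gives $h^{5/2}\le h^2$, which is more than sufficient. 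Summing the two contributions yields the second estimate, and the final remark on the second estimate is then immediate.
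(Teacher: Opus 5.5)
Your proposal is correct and follows the paper's proof essentially verbatim. The first estimate comes from \eqref{eq:postprocessing-hessian-error} combined with \eqref{eq:total-error}. For the second, you use \eqref{eq:postprocessing-displacement-error}, bound $e_h$ by Theorem~\ref{thm:second-order-displacement-L2-error}, and control the Hessian term with the preliminary $h^{1/2}$ bound \eqref{eq:preliminary-L2-moment-bound}, which makes its contribution of order $h^{5/2}$.
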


\begin{proof}
The first estimate follows from
\eqref{eq:postprocessing-hessian-error} and
\eqref{eq:total-error}. Under the dual regularity assumption,
Theorem~\ref{thm:second-order-displacement-L2-error} bounds \(e_h\) by
order \(h^2\). Moreover, \eqref{eq:postprocessing-hessian-error} and
\eqref{eq:preliminary-L2-moment-bound} bound the Hessian error by order
\(h^{1/2}\) under the same data assumptions. Hence its contribution to
\eqref{eq:postprocessing-displacement-error} is of order \(h^{5/2}\),
which proves the second estimate. The stated \(h^{3/2}\) cases follow
from \eqref{eq:volume-consistency}.
\end{proof}
 \section{Numerical experiments}
\label{sec:numerical-experiments}

We consider three geometries. The disk is the original plate-paradox
example of Babu\v{s}ka and Pitk\"aranta~\cite{BabuskaPitkaranta1990} and
permits a direct comparison with the uncorrected method. The
trefoil is smooth and nonconvex, with polygonal boundary edges on both
sides of the physical boundary. The final example combines multiple
boundary components, curved and exact straight pieces, and resolved
geometric corners.

All computations use the F\"uhrer--Heuer element and the local cubic
postprocessing in \eqref{eq:local-cubic-postprocessing}. We take
\(\sigma=0\) for the disk and the multiply connected plate, and
\(\sigma=0.3\) for the trefoil. Each mesh is refined by dividing every
triangle into four children and projecting new vertices on curved
boundary pieces onto the corresponding exact curves.
The manufactured solutions are evaluated throughout
\(\Omega_h\), including the parts outside \(\Omega\). In this section,
\(\|\cdot\|_0\) denotes the \(L^2(\Omega_h)\)-norm. The estimated orders of
convergence (EoCs) are computed from consecutive mesh levels.

The nonhomogeneous boundary data are imposed as described in
Remark~\ref{rem:nonhomogeneous-data}, with \(g_{D,h}=I_hg_D\) denoting
the continuous piecewise affine trace interpolant and with zero
effective-shear mean imposed on every boundary edge.

\subsection{The unit disk}
\label{subsec:numerical-disk}

Let \(\Omega=\{(x,y):x^2+y^2<1\}\) and
\[
 u(x,y)=\frac{(x^2+y^2)^2-6(x^2+y^2)+5}{64},
 \qquad f=1.
\]
Then \(u=M_{nn}=0\) on \(\Gamma\). The initial mesh consists of eight
triangles joining the origin to equally spaced boundary vertices. Four
uniform refinements give \(2{,}048\) triangles.

The corrected results in Table~\ref{tab:num-disk} show second-order
convergence for the displacement and order \(3/2\) for both the bending
moment and the broken Hessian of the postprocessed displacement, in
agreement with the estimates of Sections~\ref{sec:stability-error} and
\ref{sec:supercloseness-postprocessing}.

\begin{table}[!htbp]
\centering
\caption{Unit disk: corrected and uncorrected (\(\mathrm{nc}\)) errors and
estimated orders of convergence.}
\label{tab:num-disk}
\scriptsize
\setlength{\tabcolsep}{1.2pt}
\renewcommand{\arraystretch}{1.06}
\begin{tabular*}{\textwidth}{@{\extracolsep{\fill}}*{13}{c}@{}}
\toprule
$h$ & \multicolumn{2}{c}{$\|u-u_h\|_0$}
& \multicolumn{2}{c}{$\|u-u_h^{\star}\|_0$}
& \multicolumn{2}{c}{$\|D_h^2(u-u_h^{\star})\|_0$}
& \multicolumn{2}{c}{$\|\bm M-\bm M_h\|_0$}
& \multicolumn{2}{c}{$\|u-u_h^{\mathrm{nc}}\|_0$}
& \multicolumn{2}{c}{$\|\bm M-\bm M_h^{\mathrm{nc}}\|_0$}\\
\cmidrule(lr){2-3}\cmidrule(lr){4-5}\cmidrule(lr){6-7}
\cmidrule(lr){8-9}\cmidrule(lr){10-11}\cmidrule(lr){12-13}
1.0000 & $1.37\mathrm{e}{-2}$ & \multicolumn{1}{c}{--} & $1.22\mathrm{e}{-2}$ & \multicolumn{1}{c}{--} & $9.97\mathrm{e}{-2}$ & \multicolumn{1}{c}{--} & $1.27\mathrm{e}{-1}$ & \multicolumn{1}{c}{--} & $3.44\mathrm{e}{-2}$ & \multicolumn{1}{c}{--} & $1.64\mathrm{e}{-1}$ & \multicolumn{1}{c}{--}\\
0.5711 & $2.66\mathrm{e}{-3}$ & 2.93 & $2.05\mathrm{e}{-3}$ & 3.18 & $3.76\mathrm{e}{-2}$ & 1.74 & $4.77\mathrm{e}{-2}$ & 1.74 & $1.52\mathrm{e}{-2}$ & 1.46 & $1.22\mathrm{e}{-1}$ & 0.53\\
0.3022 & $5.64\mathrm{e}{-4}$ & 2.44 & $3.59\mathrm{e}{-4}$ & 2.74 & $1.31\mathrm{e}{-2}$ & 1.66 & $1.69\mathrm{e}{-2}$ & 1.63 & $6.77\mathrm{e}{-3}$ & 1.27 & $8.96\mathrm{e}{-2}$ & 0.48\\
0.1551 & $1.30\mathrm{e}{-4}$ & 2.20 & $7.02\mathrm{e}{-5}$ & 2.44 & $4.55\mathrm{e}{-3}$ & 1.58 & $5.97\mathrm{e}{-3}$ & 1.56 & $3.14\mathrm{e}{-3}$ & 1.15 & $6.49\mathrm{e}{-2}$ & 0.48\\
0.0785 & $3.13\mathrm{e}{-5}$ & 2.09 & $1.52\mathrm{e}{-5}$ & 2.25 & $1.59\mathrm{e}{-3}$ & 1.54 & $2.10\mathrm{e}{-3}$ & 1.53 & $1.51\mathrm{e}{-3}$ & 1.08 & $4.65\mathrm{e}{-2}$ & 0.49\\
\bottomrule
\end{tabular*}
\end{table}
 
To isolate the effect of the correction, we also solve the same problem
in \(\Sigma_h^{nn}\times V_h\), leaving the effective-shear mean
unconstrained. Denote this solution by
\((\bm M_h^{\mathrm{nc}},u_h^{\mathrm{nc}})\). The final EoC of the moment
error is \(0.49\), compared with \(1.53\) for the corrected solution. This
agrees with the \(\mathcal O(h^{1/2})\) boundary-consistency bound in
Remark~\ref{rem:role-shear-mean-constraint}. Repeating the duality
argument of Section~\ref{subsec:displacement-L2-error} with this
half-order moment bound gives first-order displacement convergence, as
observed in the table.

\subsection{A nonconvex trefoil domain}
\label{subsec:numerical-nonconvex}

The second domain is the asymmetric three-leaf domain used by Arnold and
Walker~\cite{ArnoldWalker2020}, with boundary parametrization
\[
 x(t)=[1+0.4\cos(3t)]\cos t,
 \quad
 y(t)=[1+(0.4+0.22\sin t)\cos(3t)]\sin t,
 \quad 0\leq t\leq2\pi.
\]
We choose \(u(x,y)=\sin(2\pi x)\cos(2\pi y)\) and
\(f=\Delta^2u=64\pi^4u\), with boundary data induced by \(u\) and
\(\bm M=\mathbb C_\sigma D^2u\). The initial mesh has \(184\) triangles.
Four uniform refinements give \(47{,}104\) triangles. Some polygonal
boundary edges lie outside \(\Omega\).

Table~\ref{tab:num-trefoil} shows order \(3/2\) for the moment and the
postprocessed broken Hessian. Both displacement errors converge at least
quadratically over the reported levels. Figure~\ref{fig:num-trefoil-fields}
illustrates the geometric mismatch and the postprocessed solution.

\begin{table}[!htbp]
\centering
\caption{Trefoil domain: errors and estimated orders of convergence.}
\label{tab:num-trefoil}
\scriptsize\setlength{\tabcolsep}{3.0pt}
\renewcommand{\arraystretch}{1.06}
\begin{tabular}{@{}*{9}{c}@{}}
\toprule
$h$ & \multicolumn{2}{c}{$\|u-u_h\|_0$} & \multicolumn{2}{c}{$\|u-u_h^\star\|_0$} & \multicolumn{2}{c}{$\|D_h^2(u-u_h^\star)\|_0$} & \multicolumn{2}{c}{$\|\bm M-\bm M_h\|_0$}\\
\cmidrule(lr){2-3}\cmidrule(lr){4-5}\cmidrule(lr){6-7}\cmidrule(lr){8-9}
0.3325 & $1.01\mathrm{e}{-1}$ & \multicolumn{1}{c}{--} & $5.66\mathrm{e}{-2}$ & \multicolumn{1}{c}{--} & $1.33\mathrm{e}{+1}$ & \multicolumn{1}{c}{--} & $1.02\mathrm{e}{+1}$ & \multicolumn{1}{c}{--}\\
0.1759 & $2.31\mathrm{e}{-2}$ & 2.31 & $6.73\mathrm{e}{-3}$ & 3.35 & $4.28\mathrm{e}{+0}$ & 1.79 & $3.47\mathrm{e}{+0}$ & 1.69\\
0.0911 & $5.67\mathrm{e}{-3}$ & 2.14 & $8.35\mathrm{e}{-4}$ & 3.17 & $1.41\mathrm{e}{+0}$ & 1.69 & $1.19\mathrm{e}{+0}$ & 1.62\\
0.0466 & $1.41\mathrm{e}{-3}$ & 2.07 & $1.28\mathrm{e}{-4}$ & 2.79 & $4.76\mathrm{e}{-1}$ & 1.61 & $4.15\mathrm{e}{-1}$ & 1.57\\
0.0235 & $3.53\mathrm{e}{-4}$ & 2.03 & $2.74\mathrm{e}{-5}$ & 2.26 & $1.65\mathrm{e}{-1}$ & 1.56 & $1.45\mathrm{e}{-1}$ & 1.54\\
\bottomrule
\end{tabular}
\end{table}
 
\begin{figure}[!htbp]
 \centering
 \includegraphics[width=0.82\textwidth]{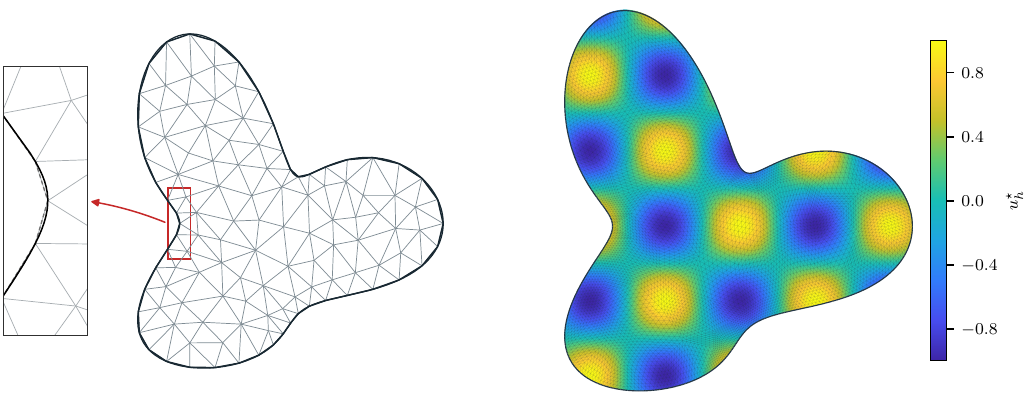}
 \caption{Trefoil domain. The left panel enlarges the red-framed concave
 patch of the initial mesh shown in the center. The solid curve is
 \(\Gamma\), and the dashed segments form \(\Gamma_h\). The right panel
 shows \(u_h^\star\) on the level-three mesh with \(11{,}776\) triangles.}
 \label{fig:num-trefoil-fields}
\end{figure}

\FloatBarrier

\subsection{A multiply connected plate with mixed boundary geometry}
\label{subsec:numerical-wrench}

The outer boundary of the third domain consists of two unequal circular
arcs joined by their common tangent segments. The circle centers are
\((0,1.82)\) and \((0,-1.58)\), with radii \(0.78\) and \(1.12\). The
smaller end contains a concentric circular hole of radius \(0.34\). The
larger end contains a regular hexagonal hole of circumradius \(0.56\),
rotated through \(17^\circ\). We take
\[
 u(x,y)=0.5+0.12x-0.08y+0.02\eta(y+0.78)
 \left(x^4+\frac35x^2y^2+\frac7{10}y^4+\frac12xy^3\right),
\]
where \(\eta(t)=\exp(-1/t^2)\) for \(t>0\) and \(\eta(t)=0\) for
\(t\leq0\). We set \(f=\Delta^2u\) and prescribe the boundary data
induced by \(u\) and \(\bm M=D^2u\). Near the hexagonal hole, \(u\) is
affine and hence \(M_{nn}=0\).
The initial mesh has \(649\) triangles. Four uniform refinements give
\(166{,}144\) triangles and \(h_{\max}=2.1298\times10^{-2}\).

The results in Table~\ref{tab:num-wrench} approach order \(3/2\) for the
moment and the postprocessed broken Hessian, and order two for both
displacement errors. Thus, the same correction applies without change to
multiple boundary components and to boundaries combining curved pieces,
straight pieces, and geometric corners. Figure~\ref{fig:num-wrench-fields}
shows the mesh and the computed bending moment.

\begin{table}[!htbp]
\centering
\caption{Multiply connected long plate: errors and estimated orders of convergence.}
\label{tab:num-wrench}
\scriptsize\setlength{\tabcolsep}{3.0pt}
\renewcommand{\arraystretch}{1.06}
\begin{tabular}{@{}*{9}{c}@{}}
\toprule
$h$ & \multicolumn{2}{c}{$\|u-u_h\|_0$} & \multicolumn{2}{c}{$\|u-u_h^\star\|_0$} & \multicolumn{2}{c}{$\|D_h^2(u-u_h^\star)\|_0$} & \multicolumn{2}{c}{$\|\bm M-\bm M_h\|_0$}\\
\cmidrule(lr){2-3}\cmidrule(lr){4-5}\cmidrule(lr){6-7}\cmidrule(lr){8-9}
0.3408 & $4.37\mathrm{e}{-4}$ & \multicolumn{1}{c}{--} & $8.26\mathrm{e}{-5}$ & \multicolumn{1}{c}{--} & $1.05\mathrm{e}{-2}$ & \multicolumn{1}{c}{--} & $1.33\mathrm{e}{-2}$ & \multicolumn{1}{c}{--}\\
0.1704 & $1.08\mathrm{e}{-4}$ & 2.01 & $8.80\mathrm{e}{-6}$ & 3.23 & $3.68\mathrm{e}{-3}$ & 1.52 & $4.71\mathrm{e}{-3}$ & 1.50\\
0.0852 & $2.70\mathrm{e}{-5}$ & 2.00 & $1.11\mathrm{e}{-6}$ & 2.99 & $1.29\mathrm{e}{-3}$ & 1.51 & $1.66\mathrm{e}{-3}$ & 1.50\\
0.0426 & $6.75\mathrm{e}{-6}$ & 2.00 & $2.51\mathrm{e}{-7}$ & 2.14 & $4.53\mathrm{e}{-4}$ & 1.51 & $5.86\mathrm{e}{-4}$ & 1.50\\
0.0213 & $1.69\mathrm{e}{-6}$ & 2.00 & $7.09\mathrm{e}{-8}$ & 1.82 & $1.60\mathrm{e}{-4}$ & 1.50 & $2.07\mathrm{e}{-4}$ & 1.50\\
\bottomrule
\end{tabular}
\end{table}
 
\begin{figure}[!htbp]
 \centering
 \includegraphics[width=0.98\textwidth]{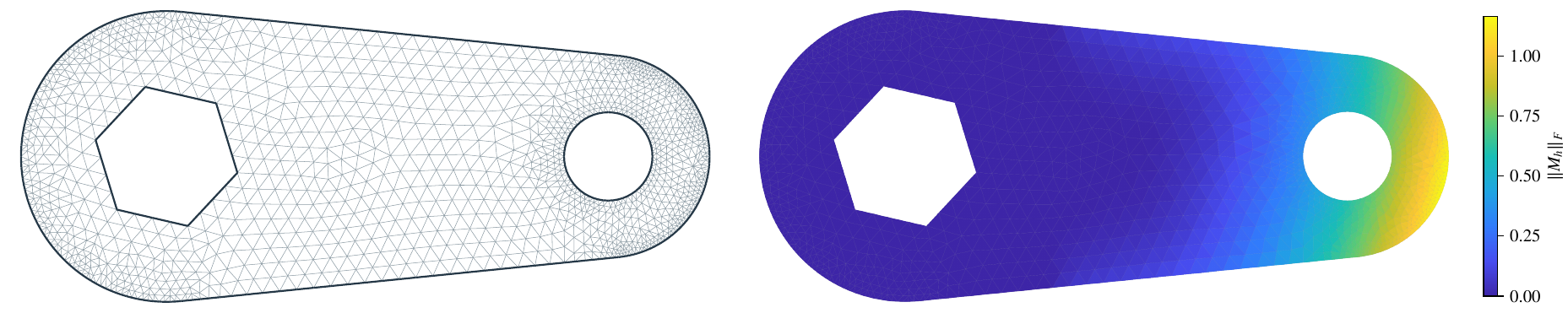}
 \caption{Multiply connected long plate: the level-one mesh (left) and
 the discrete bending-moment magnitude \(|\bm M_h|\), with mesh
 lines, on the same grid (right).}
 \label{fig:num-wrench-fields}
\end{figure}

\FloatBarrier

Although postprocessing does not change the observed second-order rate of
the displacement \(L^2\)-error, it substantially reduces its magnitude in
all three examples. On the finest meshes, the reduction factors are
approximately \(2.1\), \(12.9\), and \(23.8\), respectively.
 \leavevmode\par

\appendix
\section{Uniform discrete boundary lifting}
\label{app:boundary-lifting}
We prove the boundary lifting used in Lemma~\ref{lem:distributed-normal-lifting} (normal--normal constant-mode correction).

\begin{lemma}[uniform discrete $H^1$ boundary lifting]
\label{lem:discrete-trace-lifting}
Under Assumption~\ref{ass:polygonal-boundary}, 
for every $p_h^\partial\in C^0(\Gamma_h)$ satisfying $p_h^\partial|_e\in \mathbb{P}_1(e)$ for all $e\in\mathcal E_h^\partial$, 
there exists $p_h\in S_h^1$, depending linearly on $p_h^\partial$,
such that
\begin{equation}
\label{eq:discrete-trace-lifting}
  p_h|_{\Gamma_h}=p_h^\partial,
  \qquad
  \|p_h\|_{H^1(\Omega_h)}
  \lesssim
  \|p_h^\partial\|_{H^1(\Gamma_h)}.
\end{equation}
If $p_h^\partial$ has zero mean on every connected
component of $\Gamma_h$, then
\begin{equation}
\label{eq:mean-zero-discrete-trace-lifting}
  \|p_h\|_{H^1(\Omega_h)}
  \lesssim
  \|\partial_t p_h^\partial\|_{L^2(\Gamma_h)}.
\end{equation}
The hidden constants are uniform for $0<h\le h_0$, with $h_0>0$
sufficiently small.
\end{lemma}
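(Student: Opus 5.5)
The plan is to use the classical discrete harmonic extension. We argue that it is uniformly bounded from $H^{1/2}(\Gamma_h)$ to $H^1(\Omega_h)$, and then bound the $H^{1/2}$ norm by the $H^1(\Gamma_h)$ norm.

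\emph{Step 1 (continuous lifting).} Given $p_h^\partial$, let $E_h p_h^\partial\in H^1(\Omega_h)$ be a continuous extension with
\[
\|E_h p_h^\partial\|_{H^1(\Omega_h)}\lesssim\|p_h^\partial\|_{H^{1/2}(\Gamma_h)}.
\]
This is the standard Gagliardo right inverse of the trace. Its constant depends only on the Lipschitz character of $\Omega_h$, which is uniform in $h$ by the remark following Assumption~\ref{ass:polygonal-boundary}. To make the uniformity explicit, we build the extension in the fixed finitely many charts. In each chart $\Gamma_h$ is a graph with a uniformly bounded Lipschitz constant, and there we use the usual half-space extension, for instance convolution with a mollifier scaled by the distance to the boundary.

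\emph{Step 2 (discrete correction).} A Scott--Zhang quasi-interpolant $I_h$ into $S_h^1$ preserves boundary traces that are already piecewise affine, provided the boundary averaging sets are chosen as boundary edges. It is also $H^1$-stable with constants that depend only on shape regularity. We set $p_h:=I_h E_h p_h^\partial$. Then $p_h|_{\Gamma_h}=p_h^\partial$, $p_h$ depends linearly on $p_h^\partial$, and
\[
\|p_h\|_{H^1(\Omega_h)}\lesssim\|p_h^\partial\|_{H^{1/2}(\Gamma_h)}\le\|p_h^\partial\|_{H^1(\Gamma_h)}.
\]
This proves \eqref{eq:discrete-trace-lifting}.

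If one prefers to avoid Scott--Zhang, an alternative is to take the discrete harmonic extension, which minimizes the $H^1$ energy among functions in $S_h^1$ with trace $p_h^\partial$. Its energy is bounded by that of any admissible competitor, in particular by that of $I_hE_hp_h^\partial$.

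\emph{Step 3 (mean-zero case).} Suppose $p_h^\partial$ has zero mean on every component $\Gamma_h^{(i)}$. A one-dimensional Poincar\'e inequality on each closed curve gives
\[
\|p_h^\partial\|_{L^2(\Gamma_h^{(i)})}\le C|\Gamma_h^{(i)}|\,\|\partial_t p_h^\partial\|_{L^2(\Gamma_h^{(i)})},
\]
with $C$ universal, since the mean-zero periodic Poincar\'e constant is $|\Gamma_h^{(i)}|/(2\pi)$. The lengths $|\Gamma_h^{(i)}|$ are uniformly bounded because $\Gamma_h\to\Gamma$ with inscribed vertices, so the polygonal length is at most the length of $\Gamma^{(i)}$. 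Hence $\|p_h^\partial\|_{H^1(\Gamma_h)}\lesssim\|\partial_tp_h^\partial\|_{L^2(\Gamma_h)}$, and \eqref{eq:mean-zero-discrete-trace-lifting} follows from \eqref{eq:discrete-trace-lifting}.

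\emph{Main obstacle.} The delicate point is the $h$-uniformity of the extension constant in Step 1, because $\Omega_h$ varies with $h$. I would handle this by fixing a finite atlas for $\Gamma$ in $\widetilde\Omega$ and checking that, for $h\le h_0$, each $\Gamma_h^{(i)}$ is a Lipschitz graph over the same coordinate patches with uniformly bounded slopes. This is where inscribed vertices and the convergence of edge directions are used, as in \eqref{eq:boundary-frame-formulas}. The partition of unity and the chart-wise extension constants are then independent of $h$. Shape regularity and quasi-uniformity enter only through the Scott--Zhang stability constant.
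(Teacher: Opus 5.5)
Your proof is correct and has the same architecture as the paper's: a continuous $H^1(\Omega_h)$ lifting built in a fixed finite atlas, then the Scott--Zhang operator with boundary-edge averaging to return to $S_h^1$ while keeping the piecewise affine trace, and finally the periodic Poincar\'e inequality with $|\Gamma_h^{(i)}|\le|\Gamma^{(i)}|$ for the mean-zero case.

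The genuine difference is the continuous lifting. You use the Gagliardo right inverse of the trace, which is bounded from $H^{1/2}(\Gamma_h)$. You then give up that gain through $H^1(\Gamma_h)\hookrightarrow H^{1/2}(\Gamma_h)$. The paper uses the fact that only $H^1(\Gamma_h)$ control is needed. In each chart it writes $\Gamma_h$ as $r=\gamma_h(s)$ and sets $w(s,\gamma_h(s)+\xi)=\eta(\xi/\delta)\,(\chi p_h^\partial)(s,\gamma_h(s))$. This is transport in the chart's vertical direction times a cutoff. Direct differentiation bounds its $H^1$ norm by $L$, $\delta$, and $\|g_h\|_{H^1(I)}$. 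The $h$-uniformity is then immediate, and one need not verify uniform $H^{1/2}$ norm equivalences or uniform mollifier-extension constants on the varying curves $\Gamma_h$. Your route yields the stronger $H^{1/2}$ bound, which is not needed here.

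Two small remarks:
\begin{itemize}
\item The uniform slope bound does not need convergence of edge directions, and hence not \eqref{eq:boundary-frame-formulas}, which relies on the piecewise $C^{1,1}$ hypothesis absent from this lemma. Each slope of the inscribed interpolant $\gamma_h$ is a difference quotient of $\gamma$, so $\|\gamma_h'\|_{L^\infty}\le L$ directly.
\item If you use the discrete harmonic extension instead, let it minimize the full $H^1(\Omega_h)$ norm. If it minimizes only the seminorm, you need a uniform Friedrichs inequality with a boundary term to control the $L^2$ part.
\end{itemize}
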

\begin{proof}
Fix a finite Lipschitz cover of \(\Gamma\) and a smooth partition of
unity \(\{\chi_j\}\) on a neighborhood of \(\Gamma\), with supports
compactly contained in the corresponding charts. By the boundary interpolation and Hausdorff convergence
in Assumption~\ref{ass:polygonal-boundary}, the same charts
cover \(\Gamma_h\) and \(\sum_j\chi_j=1\) on \(\Gamma_h\) for all
sufficiently small \(h\).

We now consider one chart and suppress its index. In its fixed rigid
coordinates \((s,r)\), write the physical and polygonal boundaries as
\(r=\gamma(s)\) and \(r=\gamma_h(s)\), respectively, with the domains
lying above the graphs. The piecewise affine interpolant \(\gamma_h\)
inherits the Lipschitz bound \(\|\gamma_h'\|_{L^\infty}\le L\) of
\(\gamma\) and converges uniformly to it. The interior margin of the
fixed chart therefore provides a width \(\delta>0\), independent of
\(h\), such that \((s,\gamma_h(s)+\xi)\in\Omega_h\) for
\(0<\xi<\delta\) over the support of \(\chi|_{\Gamma_h}\).

Set \(g_h(s):=(\chi p_h^\partial)(s,\gamma_h(s))\), \(s\in I\), where
\(I\) is the coordinate interval of the chart. Choose a fixed smooth
cutoff \(\eta\) with \(\eta(0)=1\) and vanishing near
one, and define the local lifting by
\[
  w(s,\gamma_h(s)+\xi):=\eta(\xi/\delta)g_h(s),
  \qquad 0<\xi<\delta.
\]
The support conditions permit extension by zero to the rest of
\(\Omega_h\). Direct differentiation gives
\[
  \|w\|_{H^1(\Omega_h)}^2
  \le C(L)\Bigl[
    \delta\|g_h'\|_{L^2(I)}^2
    +(\delta+\delta^{-1})\|g_h\|_{L^2(I)}^2
  \Bigr].
\]
The comparison of coordinate length with arclength depends only on
\(L\), while the partition functions and their derivatives are fixed.
Summing these local liftings therefore yields \(w_h\in H^1(\Omega_h)\)
with trace \(p_h^\partial\) and
\[
  \|w_h\|_{H^1(\Omega_h)}
  \le C_\Omega\|p_h^\partial\|_{H^1(\Gamma_h)}.
\]
Here \(C_\Omega\) depends on the fixed cover, Lipschitz
bounds, and interior widths.

Set \(p_h:=I_h^\partial w_h\), where \(I_h^\partial\) is the
Scott--Zhang projection with boundary-edge averaging at boundary
vertices \cite{ScottZhang1990}. Its boundary preservation and uniform
\(H^1\)-stability give \eqref{eq:discrete-trace-lifting}.
If the trace has zero mean on each boundary component, the periodic
Poincar\'e inequality and
\(|\Gamma_h^{(i)}|\le |\Gamma^{(i)}|\) give
\(\|p_h^\partial\|_{H^1(\Gamma_h)}
\lesssim\|\partial_t p_h^\partial\|_{L^2(\Gamma_h)}\), proving
\eqref{eq:mean-zero-discrete-trace-lifting}. All steps in the construction
are linear in \(p_h^\partial\), which completes the proof.
\end{proof}


\begin{thebibliography}{10}

\bibitem{ArnoldWalker2020}
{\sc D.~N. Arnold and S.~W. Walker}, {\em The {Hellan--Herrmann--Johnson}
  method with curved elements}, SIAM Journal on Numerical Analysis, 58 (2020),
  pp.~2829--2855, \url{https://doi.org/10.1137/19M1288723}.

\bibitem{Babuska1963}
{\sc I.~Babu\v{s}ka}, {\em The theory of small changes in the domain of
  existence in the theory of partial differential equations and its
  applications}, in Differential Equations and Their Applications: Proceedings
  of the Conference Held in Prague in September 1962, Prague, 1963, Publishing
  House of the Czechoslovak Academy of Sciences, pp.~13--26.

\bibitem{BabuskaPitkaranta1990}
{\sc I.~Babu\v{s}ka and J.~Pitk\"aranta}, {\em The plate paradox for hard and
  soft simple support}, SIAM Journal on Mathematical Analysis, 21 (1990),
  pp.~551--576, \url{https://doi.org/10.1137/0521030}.

\bibitem{BartelsTscherner2025}
{\sc S.~Bartels and P.~Tscherner}, {\em Necessary and sufficient conditions for
  avoiding {Babu\v{s}ka}'s paradox on simplicial meshes}, IMA Journal of
  Numerical Analysis, 45 (2025), pp.~1300--1319,
  \url{https://doi.org/10.1093/imanum/drae050}.

\bibitem{BoffiBrezziFortin2013}
{\sc D.~Boffi, F.~Brezzi, and M.~Fortin}, {\em Mixed Finite Element Methods and
  Applications}, vol.~44 of Springer Series in Computational Mathematics,
  Springer, Heidelberg, 2013, \url{https://doi.org/10.1007/978-3-642-36519-5}.

\bibitem{BrennerNeilanSung2013}
{\sc S.~C. Brenner, M.~Neilan, and L.-Y. Sung}, {\em Isoparametric {$C^0$}
  interior penalty methods for plate bending problems on smooth domains},
  Calcolo, 50 (2013), pp.~35--67,
  \url{https://doi.org/10.1007/s10092-012-0057-1}.

\bibitem{ChechkinLukkassenMeidell2008}
{\sc G.~A. Chechkin, D.~Lukkassen, and A.~Meidell}, {\em On the
  {Sapondzhyan--Babu\v{s}ka} paradox}, Applicable Analysis, 87 (2008),
  pp.~1443--1460.

\bibitem{ChenHuangDivDiv2020}
{\sc L.~Chen and X.~Huang}, {\em Finite elements for divdiv-conforming
  symmetric tensors}, 2020, \url{https://arxiv.org/abs/2005.01271}.

\bibitem{ChenHuangArbitrary2022}
{\sc L.~Chen and X.~Huang}, {\em Finite elements for div- and divdiv-conforming
  symmetric tensors in arbitrary dimension}, SIAM Journal on Numerical
  Analysis, 60 (2022), pp.~1932--1961,
  \url{https://doi.org/10.1137/21M1433708}.

\bibitem{ChenHuang3D2022}
{\sc L.~Chen and X.~Huang}, {\em Finite elements for div div conforming
  symmetric tensors in three dimensions}, Mathematics of Computation, 91
  (2022), pp.~1107--1142, \url{https://doi.org/10.1090/mcom/3700}.

\bibitem{ChenHuangHybrid2025}
{\sc L.~Chen and X.~Huang}, {\em A new div-div-conforming symmetric tensor
  finite element space with applications to the biharmonic equation},
  Mathematics of Computation, 94 (2025), pp.~33--72,
  \url{https://doi.org/10.1090/mcom/3957}.

\bibitem{Davini2002}
{\sc C.~Davini}, {\em {$\Gamma$}-convergence of external approximations in
  boundary value problems involving the bi-laplacian}, Journal of Computational
  and Applied Mathematics, 140 (2002), pp.~185--208.

\bibitem{Davini2003}
{\sc C.~Davini}, {\em Gaussian curvature and {Babu\v{s}ka}'s paradox in the
  theory of plates}, in Rational Continua, Classical and New, Springer Italia,
  Milan, 2003, pp.~67--87, \url{https://doi.org/10.1007/978-88-470-2231-7_6}.

\bibitem{DaviniPitacco2000}
{\sc C.~Davini and I.~Pitacco}, {\em An unconstrained mixed method for the
  biharmonic problem}, SIAM Journal on Numerical Analysis, 38 (2000),
  pp.~820--836, \url{https://doi.org/10.1137/S0036142998347833}.

\bibitem{DeCosterNicaiseSweers2019}
{\sc C.~De~Coster, S.~Nicaise, and G.~Sweers}, {\em Comparing variational
  methods for the hinged {Kirchhoff} plate with corners}, Mathematische
  Nachrichten, 292 (2019), pp.~2574--2601.

\bibitem{FuehrerHeuer2025}
{\sc T.~F{\"u}hrer and N.~Heuer}, {\em Mixed finite elements for
  {Kirchhoff--Love} plate bending}, Mathematics of Computation, 94 (2025),
  pp.~1065--1099, \url{https://doi.org/10.1090/mcom/3995}.

\bibitem{Grisvard1985}
{\sc P.~Grisvard}, {\em Elliptic Problems in Nonsmooth Domains}, vol.~24 of
  Monographs and Studies in Mathematics, Pitman, Boston, 1985.

\bibitem{HuLiangMaZhang2024}
{\sc J.~Hu, Y.~Liang, R.~Ma, and M.~Zhang}, {\em A family of conforming finite
  element divdiv complexes on cuboid meshes}, Numerische Mathematik, 156
  (2024), pp.~1603--1638, \url{https://doi.org/10.1007/s00211-024-01418-7}.

\bibitem{HuMaZhang2021}
{\sc J.~Hu, R.~Ma, and M.~Zhang}, {\em A family of mixed finite elements for
  the biharmonic equations on triangular and tetrahedral grids}, Science China
  Mathematics, 64 (2021), pp.~2793--2816,
  \url{https://doi.org/10.1007/s11425-020-1883-9}.

\bibitem{MazyaNazarov1986}
{\sc V.~G. Maz'ya and S.~A. Nazarov}, {\em Paradoxes of the passage to the
  limit in solutions of boundary value problems for the approximation of smooth
  domains by polygons}, Izvestiya Akademii Nauk SSSR. Seriya Matematicheskaya,
  50 (1986), pp.~1156--1177, 1343.

\bibitem{NazarovSweersStilyanou2011}
{\sc S.~A. Nazarov, G.~Sweers, and A.~Stilyanou}, {\em On paradoxes in problems
  of the bending of polygonal plates with ``hinge-supported'' edges}, Doklady
  Akademii Nauk, 439 (2011), pp.~476--480.

\bibitem{RannacherParadox1979}
{\sc R.~Rannacher}, {\em Finite element approximation of simply supported
  plates and the {Babu\v{s}ka} paradox}, Zeitschrift f\"ur Angewandte
  Mathematik und Mechanik, 59 (1979), pp.~T73--T76.

\bibitem{RannacherMixed1979}
{\sc R.~Rannacher}, {\em On nonconforming and mixed finite element methods for
  plate bending problems: The linear case}, RAIRO Analyse Num\'erique, 13
  (1979), pp.~369--387, \url{https://doi.org/10.1051/m2an/1979130403691}.

\bibitem{Scott1977}
{\sc L.~R. Scott}, {\em A survey of displacement methods for the plate bending
  problem}, in Formulations and Computational Algorithms in Finite Element
  Analysis, The MIT Press, Cambridge, MA, 1977, pp.~855--876.

\bibitem{ScottZhang1990}
{\sc L.~R. Scott and S.~Zhang}, {\em Finite element interpolation of nonsmooth
  functions satisfying boundary conditions}, Mathematics of Computation, 54
  (1990), pp.~483--493, \url{https://doi.org/10.2307/2008497}.

\bibitem{UtkuCarey1983}
{\sc M.~Utku and G.~F. Carey}, {\em Penalty resolution of the {Babu\v{s}ka}
  circle paradox}, Computer Methods in Applied Mechanics and Engineering, 41
  (1983), pp.~11--28, \url{https://doi.org/10.1016/0045-7825(83)90050-6}.

\end{thebibliography}
\end{document}